
\documentclass[12pt,a4paper,reqno]{amsart}

\usepackage[utf8]{inputenc}
\usepackage[T1]{fontenc}
\usepackage[english]{babel}
\usepackage[margin=2.5cm]{geometry}
\usepackage[shortlabels]{enumitem} 
\usepackage{graphicx}
\usepackage{caption}
\usepackage{subcaption} 
   

\usepackage[colorlinks]{hyperref}
\usepackage{tikz}

\usepackage{amsmath,amssymb,amsfonts,amsthm}
\usepackage{mathtools,mathrsfs}  
\usepackage[capitalize,nameinlink]{cleveref}
\allowdisplaybreaks{} 
 

\definecolor{myBlue}{HTML}{1F77B4}
\definecolor{myGreen}{HTML}{2CA02C} 
\definecolor{myOrange}{HTML}{FF7F0E}
\definecolor{myRed}{HTML}{D62728}

\hypersetup{
    colorlinks = True,
    linkcolor  = myRed, 
    citecolor  = myGreen,
    urlcolor   = myBlue,
}

\providecommand{\N}{\mathbb{N}}
\providecommand{\R}{\mathbb{R}}
\providecommand{\Z}{\mathbb{Z}}
\providecommand{\C}{\mathbb{C}}

\newcommand{\scrC}{\mathscr{C}}

\newcommand{\rmL}{\mathrm{L}} 
\newcommand{\rmH}{\mathrm{H}}

\newcommand{\ttK}{\mathtt{K}} 

\newcommand{\ttT}{\mathtt{T}}




\newcommand{\di}[1]{\mathop{}\!\mathrm{d}#1}
\DeclareMathOperator{\OO}{\mathcal{O}}
\DeclareMathOperator{\oo}{\mathcal{\scriptstyle{}O}}
\DeclareMathOperator{\diver}{div}
\DeclareMathOperator{\curl}{\mathbf{curl}}

\DeclareMathOperator{\spa}{span}
\DeclareMathOperator{\Card}{Card}

\DeclareMathOperator{\sign}{sign}

\newcommand{\abs}[1]{\left\lvert#1\right\rvert}
\newcommand{\opnorm}[1]{{\left\vert\kern-0.25ex\left\vert\kern-0.25ex\left\vert#1\right\vert\kern-0.25ex\right\vert\kern-0.25ex\right\vert}}
\newcommand{\plr}[1]{\left(#1\right)}
\newcommand{\clr}[1]{\left[#1\right]}

\newcommand{\skp}[2]{\left\langle#1, #2\right\rangle}
\newcommand{\skpL}[2]{\left(#1, #2\right)_c}
\newcommand{\skpH}[2]{\left\langle#1, #2\right\rangle_{|\sigma|}}

\newcommand{\norm}[1]{\left\lVert#1\right\rVert}
\newcommand{\normL}[1]{\left\lVert#1\right\rVert_c}
\newcommand{\normH}[1]{\left\lVert#1\right\rVert_{|\sigma|}}

\newcommand{\setst}[2]{\left\lbrace#1\ \middle\vert\ #2\right\rbrace}
\newcommand{\setwt}[2]{\left\lbrace#1\ {:}\ #2\right\rbrace}


\newcommand{\eps}{\varepsilon}
\newcommand{\sm}{\setminus}
\newcommand{\ov}{\overline}
\newcommand{\les}{\lesssim}

\newcommand{\las}{\ell}
\newcommand{\As}{A_\ell}

\theoremstyle{definition}
\newtheorem{defn}{Definition}[section]

\newtheorem{assumption}{Assumption}

\crefname{assumption}{Assumption}{Assumptions}

\theoremstyle{plain}
\newtheorem{lem}[defn]{Lemma}
\newtheorem{prop}[defn]{Proposition}
\newtheorem{thm}[defn]{Theorem}
\newtheorem{cor}[defn]{Corollary}

\theoremstyle{remark}
\newtheorem{rem}[defn]{Remark}

\makeatletter
\renewcommand\subsubsection{
    \@startsection{subsubsection}{3}%
    \z@{.5\linespacing\@plus.7\linespacing}{-.5em}%
    {\normalfont\bfseries}
}
\renewcommand\paragraph{
    \@startsection{paragraph}{4}%
    \z@\z@{-\fontdimen2\font}%
    {\normalfont\bfseries}}
\makeatother

\begin{document}

\title[Sign-changing nonlinear Helmholtz equations]{Nonlinear Helmholtz equations with sign-changing diffusion coefficient}

\author[R. Mandel]{Rainer Mandel}
\address{
  Karlsruhe Institute of Technology,
  Institute for Analysis,
  Englerstra{\ss}e 2,
  D-76131 Karlsruhe, Germany.
}
\email{Rainer.Mandel@kit.edu}

\author[Z. Moitier]{Zo{\"\i}s Moitier}
\address{
  Karlsruhe Institute of Technology,
  Institute for Analysis,
  Englerstra{\ss}e 2,
  D-76131 Karlsruhe, Germany.
}
\email{Zois.Moitier@kit.edu}

\author[B. Verf\"urth]{Barbara Verf\"urth}
\address{
  Karlsruhe Institute of Technology,
  Institute for Applied and Numerical Mathematics
  Englerstra{\ss}e 2,
  D-76131 Karlsruhe, Germany.
}
\email{Barbara.Verfuerth@kit.edu}

\date{\today}

\subjclass[2010]{35B32; 47A10}
\keywords{Bifurcation Theory; Helmholtz equation; Sign-changing; \( \ttT \)-coercivity}

\begin{abstract}
  In this paper, we study nonlinear Helmholtz equations with sign-changing diffusion coefficients on  bounded
  domains. The existence of an orthonormal basis of eigenfunctions is established making use of weak \( \ttT \)-coercivity theory.  All eigenvalues are proved to be bifurcation points and the bifurcating branches are
  investigated both theoretically and numerically.  In a one-dimensional model example we obtain the existence
  of infinitely many bifurcating branches that are mutually disjoint, unbounded, and consist of solutions
  with a fixed nodal pattern.
\end{abstract}

\maketitle

\section{Introduction}\label{sec:intro}

In this paper, we are interested in nonlinear Helmholtz equations of the form
\begin{equation}\label{eq:NLeq}
  -\diver(\sigma(x) \, \nabla u) - \lambda \, c(x) \, u = \kappa(x) \, u^3
  \quad \text{in } \Omega,
  \qquad u \in \rmH_0^1(\Omega)
\end{equation}
where \( \Omega \subset \R^N \) is a bounded domain and the diffusion coefficient \( \sigma \) is sign-changing.
As we will explain in \cref{subsec:Physics}, such problems  occur in the study of time-harmonic wave propagation through metamaterials with negative permeability and nonlinear Kerr-type permittivity.
Up to now, the linear theory dealing with the well-posedness of such problems for right-hand sides \( f(x) \) instead of \( \kappa(x) \, u^3 \) has been studied to some extent both analytically and
numerically~\cite{BBDhiaZwoelf_Timeharmonic, BonCheCia_TCoercivity, CheCia_2013, bonnet-ben_dhia_use_2016, CarCheCia_Eigenvalue,
  BBDhia_MeshRequirements}.
Here, the main difficulty is that the differential operator \( u \mapsto -\diver(\sigma(x) \, \nabla u) \) is not elliptic on the whole domain \( \Omega \).
Accordingly, the standard theory for elliptic boundary value problems based on the Lax-Milgram Lemma does not apply.
In the papers~\cite{BBDhiaZwoelf_Timeharmonic, BonCheCia_TCoercivity} the (weak) \( \ttT \)-coercivity approach was introduced to develop a solution theory for such linear problems.
The guiding idea of this method is to require that the strongly indefinite bilinear form
\[
  \rmH_0^1(\Omega) \times \rmH_0^1(\Omega) \to \R, \qquad
  (u,v) \mapsto \int_\Omega \sigma(x) \, \nabla u \cdot \nabla v \di{x} - \lambda \int_\Omega c(x) \, u v \di{x}
\]
satisfies the assumptions of the Lax-Milgram Lemma up to some compact perturbation once \( v \) is replaced by \( \ttT v \) for some isomorphism \( \ttT : \rmH_0^1(\Omega)\to \rmH_0^1(\Omega) \).
Our intention is to combine this approach with methods from nonlinear analysis to study the nonlinear Helmholtz equation \cref{eq:NLeq}.

\medskip

Under reasonable assumptions on \( \Omega \), \( \sigma \), and \( c \) our main contributions  are the following:
\begin{enumerate}[(i)]
  \item\label{item:contrib_1} There is an orthonormal basis \( \plr{\phi_j}_{j\in\Z} \) of \( \rmL^2(\Omega) \) that consists of eigenfunctions of the linear differential operator \( u\mapsto -{c(x)}^{-1} \diver(\sigma(x) \, \nabla u) \).
        The corresponding eigenvalue sequence \( \plr{\lambda_j}_{j\in\Z} \) is unbounded from above and from
        below. Moreover, the eigenfunctions are dense in \( \rmH_0^1(\Omega) \).

  \item\label{item:contrib_2}
        If \( \kappa\in \rmL^\infty(\Omega) \) then each of the eigenvalues \( \lambda_j \) is a bifurcation point
        for \cref{eq:NLeq} with respect to the trivial solution family \( \setwt{(0, \lambda)}{\lambda \in \R} \).
        By definition, this means that for all \( j\in\Z \) there is a sequence \( \plr{u_j^n,
          \lambda_j^n}_{n \in \N} \subset \rmH_0^1(\Omega) \sm \{0\} \times \R \) of nontrivial solutions such
        that \( (u_j^n, \lambda_j^n) \to (0, \lambda_j) \) in \( \rmH_0^1(\Omega) \times \R \) as \( n\to\infty \).
        Our numerical illustrations in \cref{sec02} indicate that these solutions may be located on a smooth
        and unbounded curve in \( \rmH_0^1(\Omega) \times \R \) going through the point \( (0, \lambda_j) \).
        We can also prove this for some one-dimensional model problem.

  \item\label{item:contrib_3} In a one-dimensional case, for any given \( \lambda \in \R \) there are
        infinitely many nontrivial solutions for \cref{eq:NLeq} provided that \( \kappa\in \rmL^\infty(\Omega) \)
        is uniformly positive or uniformly negative. This result is obtained using variational methods instead of
        bifurcation theory.
\end{enumerate}

\medskip

A few comments regarding \cref{item:contrib_1,item:contrib_2,item:contrib_3} are in order.
As to~\ref{item:contrib_1}, the existence of an orthonormal basis of eigenfunctions in \( \rmL^2(\Omega) \) may be considered as a known fact in view of~\cite[Section~1]{CarCheCia_Eigenvalue}.
However, we could not find a reference in the literature that covers our setting, so we briefly review this in \cref{sec03}.
On the other hand, the Weyl law asymptotics seem to be new.
Our proofs rely on the weak \( \ttT \)-coercivity approach developed in~\cite{BBDhiaZwoelf_Timeharmonic, BonCheCia_TCoercivity}.
Under this assumption, the linear theory turns out to be analogous to the linear (Fredholm) Theory for elliptic boundary value problems.
The construction of isomorphisms \( \ttT : \rmH_0^1(\Omega) \to \rmH_0^1(\Omega) \), however, is a research
topic on its own and depends on the precise setting, notably on the nature of the interface where the sign of \( \sigma \) jumps, see, \emph{e.g.}~\cite{BonCheCia_TCoercivity, BBDhia_MeshRequirements}.

\medskip

Our main bifurcation theoretical results from~\ref{item:contrib_2} also rely on the functional analytical
framework given by the weak \( \ttT \)-coercivity approach.
The task is to detect nontrivial solutions of \cref{eq:NLeq} that bifurcate from the trivial solution family.
Being given the linear theory and the Implicit Function Theorem, one knows that such bifurcations can only occur at \( \lambda = \lambda_j \) for some \( j \in \Z \).
To prove the occurrence of bifurcations we resort to variational bifurcation theory (see below for references).
Here the main difficulty comes from the fact  that the associated energy functional
\begin{equation}\label{eq:def_Phi}
  \Psi_\lambda(u)
  \coloneqq \frac{1}{2} \int_\Omega \sigma(x) \abs{\nabla u(x)}^2  \di{x}
  - \frac{\lambda}{2} \int_\Omega c(x) \, {u(x)}^2 \di{x}
  - \frac{1}{4} \int_\Omega \kappa(x) {u(x)}^4  \di{x}
\end{equation}
is strongly indefinite due to the sign change of \( \sigma \).
Strong indefiniteness means that the quadratic part of \( \Psi_\lambda \) is positive definite on an infinite-dimensional subspace of \( \rmH_0^1(\Omega) \), and it is negative definite on another
infinite-dimensional subspace.
As a consequence, standard results in this area going back to B\"ohme~\cite[Satz~II.1]{Boehme}, Marino~\cite{Marino_Bif}, and Rabinowitz~\cite[Theorem~11.4]{Rab_Minimax} do not apply.
Instead, we demonstrate how to apply the more recent variational bifurcation theory developed by Fitzpatrick, Pejsachowicz, Recht, Waterstraat~\cite{FitzPejRech,PejWat_Bifurcation}.
Better results are obtained for eigenvalues with odd geometric multiplicity, which is based on Rabinowitz' Global Bifurcation Theorem~\cite{Rab_SomeGlobal}.
In our one-dimensional model case we significantly improve and numerically illustrate our bifurcation results with the aid of bifurcation diagrams (\cref{sec02}).
The latter provide a qualitative picture of the bifurcation scenario given that each point in such a diagram corresponds to \( (\lambda, \norm{u}) \) where \( (u, \lambda) \) solves \cref{eq:NLeq}.

\medskip

As to~\ref{item:contrib_3}, the strong indefiniteness of \( \Psi_\lambda \) also makes it harder to prove the existence of critical points.
Note that critical points \( u \in \rmH_0^1(\Omega) \) satisfy \( \Psi_\lambda'(u) = 0 \), which is equivalent to \cref{eq:NLeq}.
In the case of positive diffusion coefficients, the Symmetric Mountain Pass
Theorem~\cite[Theorem~10.18]{AmbMal} applies and yields infinitely many nontrivial solutions.
In the context of strongly indefinite functionals, an analogous result was established only recently by Szulkin and Weth~\cite[Section~5]{SzuWet_Nehari}.
We will show how to apply their abstract results under reasonable extra assumptions in order to obtain
infinitely many nontrivial solutions of \cref{eq:NLeq} for any given \( \lambda \in \R \) assuming
\( \kappa\geq \alpha>0 \) or \( \kappa\leq -\alpha<0 \).

\medskip

Before commenting on the physical background of \cref{eq:NLeq} we wish to emphasize that our main goal is to
bring (weak) \( \ttT \)-coercivity theory and nonlinear analysis together.
Accordingly, we do not aim for the most general assumptions for our results to hold true.
For instance, we avoid  technicalities related to the regularity of the interface where the sign of \( \sigma \) jumps.
Similarly, we content ourselves with the special nonlinearity \( \kappa(x) \, u^3 \).
Only little effort is needed to generalize our bifurcation results as well as our variational existence
results to more general nonlinearities in all space dimensions \( N \geq 1 \).

\subsection{Physical motivation}\label{subsec:Physics}

We comment on the physical background of \cref{eq:NLeq}.
The propagation of electromagnetic waves with a fixed temporal frequency parameter
\( \omega\in\R \) is governed by the time-harmonic Maxwell's equations
\begin{equation}\label{eq:Maxwell-time}
  i \omega D - \curl H = 0
  \qquad \text{and} \qquad
  i \omega B + \curl E = 0.
\end{equation}
Here, charges and currents are assumed to be absent.
The symbols \( E, D : \R^3 \to \C^3 \) denote the electric field and the electric induction and \( H, B : \R^3 \to \C^3 \) represent the
magnetic field and the magnetic induction, respectively.
In nonlinear Kerr media the constitutive relations between these fields are given by
\begin{equation}\label{eq:constitutive}
  D = \eps(x) \, E + \chi(x) \, |E|^2 E
  \qquad \text{and} \qquad
  B = \mu(x) \, H
\end{equation}
where \( \eps, \mu, \chi \) are real-valued, see~\cite[Chapter~4]{Boyd}. In physics, these quantities are
called permittivity, permeability and third-order susceptibility of the given medium, respectively.
Plugging in this ansatz into \cref{eq:Maxwell-time} one finds
\begin{equation}\label{eq:Maxwell-time2}
  \curl\plr{ {\mu(x)}^{-1} \curl E } = \omega^2 \eps(x) \, E + \omega^2 \chi(x) \, |E|^2 E.
\end{equation}
Now we assume that the propagation of electromagnetic waves is considered in a closed waveguide \( \Omega
\times \R \) having a bounded cross-section \( \Omega \subset \R^2 \) and all material parameters only depend
on the cross-section variable.
With an abuse of notation, this means \( \mu(x) = \mu(y) \), \( \eps(x) = \eps(y) \), and \( \chi(x) =
\chi(y) \) where \( x = (y, z) \in \Omega \times \R \).
This is a natural assumption for the modelling of layered cylindrical waveguides.
If then the electric field is of the special form \( E(x) = (0, 0, u(y)) \) with \( u \) real-valued, we infer
\[
  -\diver({\mu(y)}^{-1} \, \nabla u) - \omega^2 \eps(y) \, u = \omega^2 \chi(y) \, u^3
  \quad \text{in } \Omega,
  \qquad u \in \rmH_0^1(\Omega),
\]
which corresponds to \cref{eq:NLeq} in the two-dimensional case.
For complex-valued \( u \) the nonlinearity is given by \( |u|^2u \).
Sign-changing diffusion coefficients \( \sigma(y) \coloneqq {\mu(y)}^{-1} \) may occur if one of the layers of the waveguide is filled with a
negative-index metamaterial (NIM) where the permeability \( \mu \) may be negative, see, \emph{e.g.},~\cite{OBrPen_physics}.
Note that any such solution \( u \) determines \( E, D, B, H \) via \cref{eq:Maxwell-time2,eq:constitutive}.
We mention that one may equally solve for the magnetic field, which leads to Helmholtz-type problems with
nonlinear (\emph{i.e.}\ solution-dependent) and possibly sign-changing diffusion coefficients.

\subsection{Notation}

In the following, we equip the Hilbert spaces \( \rmH_0^1(\Omega) \) and \( \rmL^2(\Omega) \) with the inner products
\[
  \skpH{u}{v} \coloneqq \int_\Omega \abs{\sigma(x)} \, \nabla u \cdot \nabla v \di{x},
  \qquad \text{and} \qquad
  \skpL{u}{v} \coloneqq \int_\Omega c(x) \, u v \di{x},
\]
respectively.
Our assumptions on \( \Omega \), \( c \), and \( \sigma \) will imply that the associated norms \( \normH{\cdot} \) and \( \normL{\cdot} \) are equivalent to the standard norms on these spaces.
Moreover, we introduce the bilinear form
\[
  a : \rmH_0^1(\Omega) \times \rmH_0^1(\Omega) \to \R,
  \quad
  (u,v) \mapsto \int_\Omega \sigma(x) \, \nabla u \cdot \nabla v \di{x}.
\]

\subsection{Outline}

The paper is organized as follows.
\Cref{sec:mainresults} contains a mathematically rigorous statement of our main results dealing with bifurcations for \cref{eq:NLeq} from the trivial solution family.
These results are illustrated numerically in \cref{sec02} with the aid of bifurcation diagrams.
Those illustrate the evolution of solutions along the branches as well the global behavior of the latter.
In \cref{sec03}, we set up the linear theory that we need to prove our main bifurcation theoretical results in \cref{sec04}.
Finally, \cref{sec:var_method} contains further existence results for nontrivial solutions of \cref{eq:NLeq} obtained by a variational approach.
The proof is based on the Critical Point Theory from~\cite[Chapter~4]{SzuWet_Nehari}.

\section{Main results}\label{sec:mainresults}

We now come to the precise formulation of our main results for \cref{eq:NLeq}, \emph{i.e.},
\begin{equation*}
  -\diver(\sigma(x) \, \nabla u) - \lambda \, c(x) \, u = \kappa(x)u^3
  \quad \text{in } \Omega,
  \qquad u \in \rmH_0^1(\Omega).
\end{equation*}
Here, \( \Omega \) and the coefficient functions \( \sigma \), \( c \), \( \kappa \) will be chosen as follows:

\begin{assumption}\label{hyp:sigma_c}
  \hfill
  \begin{enumerate}
    \item \( \Omega \subset \R^N \) for \( N \in \{ 1, 2, 3 \} \) is a bounded domain and there are
          nonempty open subsets \( \Omega_+, \Omega_- \subset \Omega \) such that \( \ov{\Omega_+ \cup \Omega_-} = \ov{\Omega} \) and \( \Omega_+ \cap \Omega_- = \varnothing \).

    \item \( \sigma > 0 \) on \( \Omega_+ \), \( \sigma < 0 \) on \( \Omega_- \) and \( \abs{\sigma} + \abs{\sigma}^{-1} \in \rmL^\infty(\Omega) \).

    \item \( c  \in \rmL^\infty(\Omega) \) with \( c(x)  \geq \alpha > 0 \) for almost all \( x \in \Omega \).

    \item \( \kappa \in \rmL^\infty(\Omega) \).
  \end{enumerate}
\end{assumption}

\begin{assumption}\label{hyp:w_T_cor}
  There is a bounded linear invertible operator \( \ttT : \rmH_0^1(\Omega) \to \rmH_0^1(\Omega) \) such that the bilinear form \( (u,v) \mapsto a(u,\ttT v) +  \skpH{\ttK u}{v} \) is continuous and coercive on \( \rmH_0^1(\Omega) \times \rmH_0^1(\Omega) \) for some compact operator \( \ttK : \rmH_0^1(\Omega) \to \rmH_0^1(\Omega) \).
\end{assumption}

Later, in \cref{cor:ONB}, we  show that \cref{hyp:sigma_c,hyp:w_T_cor} ensure the existence of an orthonormal basis \( \plr{\phi_j}_{j\in\Z} \) of \( \plr{\rmL^2(\Omega), \skpL{\cdot}{\cdot}} \) consisting of eigenfunctions associated to the linear differential operator \( u \mapsto -{c(x)}^{-1} \diver(\sigma(x) \, \nabla u) \) appearing in \cref{eq:NLeq}.
Due to the sign-change of \( \sigma \) the corresponding sequence of eigenvalues \( \plr{\lambda_j}_{j\in\Z} \) can be indexed in such a way that \( \lambda_j \to \pm\infty \) holds as \( j\to\pm\infty \).
In \cref{thm:bif} below we show that nontrivial solutions of \cref{eq:NLeq} bifurcate from the trivial branch \( \setwt{(0, \lambda)}{\lambda \in \R} \) at any of these eigenvalues.
If the eigenvalue comes with an odd-dimensional eigenspace, we even find that the bifurcating nontrivial solutions  lie on connected sets
\( \mathcal{C}_j \subset \rmH_0^1(\Omega) \times \R \) that are  unbounded or return to the trivial solution branch \( \setwt{(0, \lambda)}{\lambda \in \R} \) at some other bifurcation point.
As in~\cite{Rab_SomeGlobal}, for any given \( j \in \Z \), the set \( \mathcal{C}_j \) is defined as the connected component of \( (0, \lambda_j) \) in \( \mathcal{S} \), which in turn is defined as the closure of all nontrivial solutions of \cref{eq:NLeq} in the space \( \rmH_0^1(\Omega) \times \R \).
Our first main result reads as follows:

\begin{thm}\label{thm:bif}
  Assume~\ref{hyp:sigma_c} and~\ref{hyp:w_T_cor}.
  Let \( \plr{\lambda_j}_{j \in \Z} \) denote the unbounded sequence of eigenvalues from \cref{cor:ONB}.
  Then each \( (0, \lambda_j) \) is a bifurcation point for \cref{eq:NLeq}.
  If \( \lambda_j \) has odd geometric multiplicity, then the connected component \( \mathcal{C}_j \) in \( \mathcal{S} \) containing \( (0, \lambda_j) \) satisfies Rabinowitz' alternative:
  \begin{enumerate}[(I)]
    \item \( \mathcal{C}_j \) is unbounded in \( \rmH_0^1(\Omega) \times \R \)  or
    \item \( \mathcal{C}_j \) contains another trivial solution \( (0,\lambda_k) \) with \( k \neq j \).
  \end{enumerate}
\end{thm}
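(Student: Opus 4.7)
The plan is to split the proof into two essentially independent pieces: a local bifurcation statement valid at every eigenvalue, obtained by variational bifurcation theory, and the global Rabinowitz alternative under the odd-multiplicity hypothesis, obtained via a compact reformulation of the equation. Using Riesz representation with respect to $\skpH{\cdot}{\cdot}$, I would introduce bounded self-adjoint operators $A, B \colon \rmH_0^1(\Omega) \to \rmH_0^1(\Omega)$ defined by $\skpH{Au}{v} = a(u,v)$ and $\skpH{Bu}{v} = \skpL{u}{v}$, together with a nonlinear map $N$ defined by $\skpH{N(u)}{v} = \int_\Omega \kappa u^3 v \di{x}$. The compact embedding $\rmH_0^1(\Omega) \hookrightarrow \rmL^4(\Omega)$ (valid for $N \leq 3$) makes $B$ compact and $N$ compact with $N(u) = \oo(\normH{u})$. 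Setting $M \coloneqq \ttT^* A + \ttK$, where $\ttT^*$ denotes the $\skpH{\cdot}{\cdot}$-adjoint of $\ttT$, \cref{hyp:w_T_cor} combined with Lax--Milgram ensures that $M$ is invertible; hence $A$ is Fredholm of index zero, and $L_\lambda \coloneqq A - \lambda B$ is a norm-continuous family of self-adjoint Fredholm operators whose non-invertibility points are exactly the eigenvalues $\lambda_j$ from \cref{cor:ONB}, each of finite multiplicity.

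\medskip

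For the local bifurcation, \cref{eq:NLeq} is equivalent to the critical-point equation $\Psi_\lambda'(u) = 0$, which is strongly indefinite, so the B\"ohme--Marino--Rabinowitz framework is not applicable. I would invoke instead the variational bifurcation theorem of Fitzpatrick--Pejsachowicz--Recht and Pejsachowicz--Waterstraat~\cite{FitzPejRech,PejWat_Bifurcation}, which guarantees bifurcation from a trivial branch at every parameter value where the spectral flow of $\lambda \mapsto L_\lambda$ is nonzero. Since $B$ is positive, the eigenvalues of $L_\lambda$ depend continuously and monotonically on $\lambda$, so crossing $\lambda_j$ produces a nonzero spectral-flow jump equal in absolute value to the geometric multiplicity of $\lambda_j$. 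This establishes bifurcation at each $(0, \lambda_j)$.

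\medskip

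For the global alternative, composing the weak form of \cref{eq:NLeq} with $\ttT^*$ and applying $M^{-1}$ rewrites the equation as the compact fixed-point problem
\[
  u = M^{-1}\ttK u + \lambda M^{-1}\ttT^* B u + M^{-1}\ttT^* N(u) \eqqcolon \mathcal{F}(\lambda, u),
\]
with $\mathcal{F}(\lambda, \cdot)$ compact because $\ttK$, $B$ and $N$ are. Rabinowitz' Global Bifurcation Theorem~\cite{Rab_SomeGlobal} applied to $u - \mathcal{F}(\lambda, u) = 0$ then yields the desired alternative provided the Leray--Schauder index of the linearization changes as $\lambda$ crosses $\lambda_j$; a sufficient condition is that the algebraic multiplicity of $\lambda_j$ in the generalized eigenvalue problem $Au = \lambda Bu$ is odd. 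By \cref{cor:ONB} this problem is self-adjoint in $(\rmL^2(\Omega), \skpL{\cdot}{\cdot})$, so its algebraic and geometric multiplicities coincide; hence the hypothesis of odd geometric multiplicity yields the required change of index, and Rabinowitz' theorem produces the connected component $\mathcal{C}_j$ with the stated dichotomy.

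\medskip

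The main obstacle is the local bifurcation at eigenvalues of possibly even multiplicity: classical Morse-theoretic tools fail because $\Psi_\lambda$ has both infinite-dimensional positive- and negative-definite subspaces, and Leray--Schauder degree alone does not detect bifurcation when the parity of the crossing is even. The resolution is the spectral-flow formalism of~\cite{FitzPejRech,PejWat_Bifurcation}, whose hypotheses -- a continuous path of self-adjoint Fredholm operators on a fixed Hilbert space together with a genuine spectral crossing -- must be verified in our setting. The weak $\ttT$-coercivity framework is precisely what ensures that $L_\lambda$ is Fredholm for every $\lambda$, which is the crucial structural input that makes these abstract results available.
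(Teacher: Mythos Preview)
Your strategy matches the paper's exactly: local bifurcation at every eigenvalue via the spectral-flow theorem of Fitzpatrick--Pejsachowicz--Recht and Pejsachowicz--Waterstraat, and the Rabinowitz alternative at odd-multiplicity eigenvalues via a compact reformulation. The one substantive difference lies in the invertible operator used to set up the compact perturbation of the identity. The paper first proves (\cref{prop:Aproperties}) that for some $\ell\in\R$ the shifted operator $A_\ell \coloneqq A + \ell C$ is \emph{self-adjoint and invertible}, and then writes the equation as $A_\ell u - (\lambda+\ell) C u - \mathcal G(u)=0$ with $L=A_\ell$, $K=C$. You instead use $M=\ttT^* A + \ttK$, which is invertible by Lax--Milgram but not self-adjoint.

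Both choices work, but the paper's buys a cleaner verification of the odd-multiplicity condition: with $L,K$ self-adjoint and $K$ positive, a short Jordan-chain computation shows directly that the algebraic and geometric multiplicities of the pencil $L-\lambda K$ coincide, so the Leray--Schauder index jump equals the dimension of the eigenspace. In your formulation the linearisation $I - M^{-1}\ttK - \lambda M^{-1}\ttT^* B$ is a non-self-adjoint compact perturbation of the identity, and your appeal to ``self-adjointness in $(\rmL^2,\skpL{\cdot}{\cdot})$'' is correct in spirit but elides a step: one must argue that the parity of the index change is invariant under the isomorphism $M^{-1}\ttT^*$ (which is itself a compact perturbation of the identity since $M^{-1}\ttT^* A_\ell = I - M^{-1}\ttK + \ell M^{-1}\ttT^* C$), or else work directly with the intrinsic algebraic multiplicity of the self-adjoint pencil $A-\lambda B$. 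The paper's shift to $A_\ell$ avoids this detour and has the additional advantage that the \emph{same} pair $(L,K)=(A_\ell,C)$ feeds directly into both the variational bifurcation theorem and the Rabinowitz theorem.
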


\begin{rem}\label{rem:AssumptionA2}
  \hfill
  \begin{enumerate}[(a)]
    \item In \cref{hyp:sigma_c} we may as well assume \( c(x) \leq -\alpha < 0 \); it suffices to replace \( (c, \lambda) \) by \( (-c, -\lambda) \).
          On the other hand we cannot assume \( c \) to be sign-changing since we will need that \( \skpL{\cdot}{\cdot} \) is an inner product on \( \rmL^2(\Omega) \).
          In \cref{rem:T1_assumption} (c), we show that one cannot expect our results to hold for general sign-changing \( c \in \rmL^\infty(\Omega) \).

    \item We need not require a priori smoothness properties of \( \Omega \) or the interface \( \Gamma \coloneqq \ov{\Omega_+} \cap \ov{\Omega_-} \), but imposing those are natural when it comes to verify \cref{hyp:w_T_cor}, see the Theorems 2.1, 3.1, 3.3, 3.7, 3.10 from~\cite{BonCheCia_TCoercivity}.
          It is known that \cref{hyp:w_T_cor} does not always hold, for example in 2D if \( \sigma_- / \sigma_+ \in \clr{-3, -\frac{1}{3}} \) and the interface \( \Gamma \) has a right angle corner, see~\cite{BonCheCia_TCoercivity, bonnet-ben_dhia_use_2016}.
  \end{enumerate}
\end{rem}

We strengthen our result in some 1D model example where we can show the following:
\begin{itemize}
  \item  \cref{hyp:sigma_c,hyp:w_T_cor} are satisfied.

  \item The eigenvalues \( (\lambda_j) \) are simple and in particular have odd geometric multiplicity.

  \item  The eigenpairs \( (\phi_j, \lambda_j) \) are almost explicitly known.

  \item  The case (II) in Rabinowitz' Alternative  is ruled out, hence all \( \mathcal{C}_j \) are
        unbounded.

\end{itemize}

The setting is as follows: Assume that \( \ov{\Omega} = \ov{\Omega_-} \cup \ov{\Omega_+} \) is an interval with precisely two non-void sub-intervals \( \Omega_- = (a_-, 0) \) and \( \Omega_+ = (0, a_+) \) with \( a_- < 0 < a_+ \).
The coefficient function \( c \) and \( \sigma \) satisfy \( c(x) = c_\pm \) resp.~\( \sigma(x) = \sigma_\pm \) on \( \Omega_\pm \) where \( c_\pm > 0 \) and \( \sigma_+ > 0 > \sigma_- \) are constants.
For such domains and coefficients we consider the nonlinear problem
\begin{equation}\label{eq:NLeq1D}
  -\frac{\di{}}{\di{x}}\plr{\sigma(x) \, u'} - \lambda c(x) \, u =  \kappa(x) u^3
  \quad \text{in } \Omega,
  \qquad u \in \rmH_0^1(\Omega).
\end{equation}

\begin{cor}\label{cor:bif1D}
  Assume that \( \Omega \), \( c \), \( \sigma \) are as above, and \( \kappa \in \rmL^\infty(\Omega) \).
  Let \( {(\lambda_j)}_{j\in\Z} \) denote the unbounded sequence of simple eigenvalues
  from \cref{cor:ONB} ordered according to
  \( \ldots < \lambda_{-2} < \lambda_{-1} < 0 < \lambda_1 < \lambda_2 < \ldots \) and \( \lambda_{-1} < \lambda_0 < \lambda_1 \) with
  \[
    \lambda_0<0 \; \Leftrightarrow \; \frac{\sigma_+a_-}{a_+\sigma_-} < 1,
    \qquad
    \lambda_0 = 0 \; \Leftrightarrow \;  \frac{\sigma_+a_-}{a_+\sigma_-} = 1.
    \qquad
    \lambda_0>0 \; \Leftrightarrow \; \frac{\sigma_+a_-}{a_+\sigma_-} > 1,
  \]
  Then the connected component \( \mathcal{C}_j \subset \rmH_0^1(\Omega) \times \R \) in \( \mathcal{S} \) containing \( (0, \lambda_j) \) is unbounded, and we have \( \mathcal{C}_j \cap \mathcal{C}_k = \emptyset \) for \( j \neq k \).
  All \( (u,\lambda)\in \mathcal C_j \) with \( u\neq 0 \) have the following property:
  \begin{enumerate}[(i)]
    \item\label{item:nodal_neg} If \( j \leq -1 \) then \( u \) has \( |j| \) interior zeros in \( \Omega_- \) and satisfies \( \abs{u'} > 0 \) on \( \ov{\Omega_+} \).

    \item\label{item:nodal_zero} If \( j = 0 \) then \( u \) has no interior zeros in \( \Omega \) and satisfies \( \abs{u'} > 0 \) on \( \ov{\Omega_\pm} \).

    \item\label{item:nodal_pos} If \( j \geq 1 \) then \( u \) has \( j \) interior zeros in \( \Omega_+ \) and satisfies \( \abs{u'} > 0 \) on \( \ov{\Omega_-} \).
  \end{enumerate}
\end{cor}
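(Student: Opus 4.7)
The plan is to combine \cref{thm:bif} with an explicit analysis of the piecewise-constant linear spectral problem and a Rabinowitz-style nodal argument that will simultaneously exclude alternative~(II), give the disjointness $\mathcal{C}_j \cap \mathcal{C}_k = \emptyset$, and certify the claimed nodal description. The first step is to verify \cref{hyp:sigma_c,hyp:w_T_cor} in this 1D setting. \cref{hyp:sigma_c} is immediate, and for \cref{hyp:w_T_cor}, exploiting the embedding $\rmH_0^1(\Omega) \hookrightarrow C^0(\ov{\Omega})$, I would introduce the involution
\[
  \ttT v (x) = \begin{cases} v(x), & x \in \Omega_+, \\[2pt] -v(x) + 2\,\tfrac{x-a_-}{-a_-}\,v(0), & x \in \Omega_-, \end{cases}
\]
for which a direct integration-by-parts computation gives $a(u, \ttT v) = \int_\Omega \abs{\sigma}\, u' v' \di{x} + c_0\, u(0) v(0)$ with $c_0 = 2\sigma_-/(-a_-)$; the point-evaluation bilinear form is rank-one, so the corresponding $\ttK$ is compact and $a(\cdot,\ttT\cdot) + \skpH{\ttK\cdot}{\cdot}$ reduces to the coercive $\abs{\sigma}$-weighted $\rmH_0^1$ inner product. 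This allows invocation of \cref{thm:bif}.

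Second, I would solve $-(\sigma u')' = \lambda c u$ explicitly via the Dirichlet conditions at $a_\pm$ and the matching at $0$. Setting $\omega_\pm \coloneqq \sqrt{c_\pm/\abs{\sigma_\pm}}$, the case $\lambda > 0$ (oscillatory on $\Omega_+$, hyperbolic on $\Omega_-$) reduces to the transcendental equation
\[
  \cot(\omega_+ a_+ \sqrt{\lambda}) = \frac{\abs{\sigma_-}\omega_-}{\sigma_+\omega_+}\coth(-\omega_- a_- \sqrt{\lambda}),
\]
whose right-hand side is positive and bounded while the left-hand side sweeps each positive branch of $\cot$ exactly once, producing a strictly increasing simple positive sequence; a symmetric analysis supplies the negative sequence, and the $\lambda = 0$ case (piecewise-affine solutions) characterizes $\lambda_0 = 0$ by $\sigma_+ a_- = \sigma_- a_+$. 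Expanding both sides of the equation as $\lambda\to 0^+$ shows that the LHS/RHS ratio tends to $\sigma_+ a_-/(\sigma_- a_+)$, which yields the stated sign dichotomy for $\lambda_0$. The eigenfunctions are then explicit: for $j \neq 0$, $\phi_j$ is sine-type with $|j|$ interior zeros on the oscillatory subinterval and $\sinh$-type on the other, while $\phi_0$ is piecewise affine; in all cases $\abs{\phi_j'} > 0$ on the closure of the non-oscillatory subinterval(s).

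Third, I would propagate the nodal pattern of $\phi_j$ along $\mathcal{C}_j \setminus \{(0,\lambda_j)\}$. Any nontrivial $(u,\lambda) \in \mathcal{S}$ is $C^2$ on each $\ov{\Omega_\pm}$ by bootstrapping the ODE $-\sigma u'' = \lambda c u + \kappa u^3$, and second-order ODE uniqueness forbids simultaneous vanishing of $u$ and $u'$ in the interior of $\Omega_\pm$ or at the Dirichlet endpoints; hence interior zeros are automatically simple. Since $\lambda_j$ is simple, the Crandall--Rabinowitz local picture gives $u \approx t\,\phi_j$ for points of $\mathcal{C}_j$ near $(0,\lambda_j)$, so the $\phi_j$-nodal pattern holds in a neighborhood of the bifurcation. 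I would then show that the subset $\mathcal{U}_j \subset \mathcal{C}_j \setminus \{(0,\lambda_j)\}$ sharing this pattern is open (persistence of simple zeros and of $\abs{u'} > 0$ on compact sets) and closed (uniform $C^1$-bounds on $\ov{\Omega_\pm}$ together with ODE rigidity preclude collision of zeros, escape through $\partial\Omega_\pm$, or formation of new interior critical points in the limit), so $\mathcal{U}_j = \mathcal{C}_j \setminus \{(0,\lambda_j)\}$. Assertions~\ref{item:nodal_neg}--\ref{item:nodal_pos} and the disjointness $\mathcal{C}_j \cap \mathcal{C}_k = \emptyset$ follow immediately, while alternative~(II) is ruled out because a return of $\mathcal{C}_j$ to $(0,\lambda_k)$ with $k \neq j$ would, by the same local analysis around $\lambda_k$, force the incompatible $\phi_k$-pattern on nearby points. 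The most delicate step will be the closedness of $\mathcal{U}_j$, specifically ruling out the emergence of an interior critical point on the non-oscillatory side in a limit solution, which I expect to handle by exploiting the sign of the coefficient $\lambda c + \kappa u^2$ near the bifurcation and propagating this information globally along $\mathcal{U}_j$ via a continuation/phase-plane argument.
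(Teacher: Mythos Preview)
Your overall strategy---verify \cref{hyp:sigma_c,hyp:w_T_cor}, compute the eigenpairs explicitly for the piecewise-constant problem, then run a Rabinowitz-type open/closed nodal continuation along \(\mathcal{C}_j\setminus\{(0,\lambda_j)\}\)---is exactly the route the paper takes. Two technical choices differ. First, your \(\ttT\)-operator (sign flip on \(\Omega_-\) corrected by an affine multiple of \(v(0)\), yielding a rank-one compact remainder) is genuinely different from, and cleaner than, the paper's construction, which reflects across the interface with a cutoff and a small dilation parameter; yours exploits the 1D embedding \(\rmH_0^1\hookrightarrow C^0\) and gives weak \(\ttT\)-coercivity in one line. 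Second, for the local shape near \((0,\lambda_j)\) the paper does not invoke Crandall--Rabinowitz but instead normalizes \(u^n/\normH{u^n}\) and passes to the limit via compactness of \(C\) and \(\mathcal{G}\) to identify \(\phi_j\); since the eigenvalues are simple, both approaches work equally well.

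On the point you single out as ``most delicate''---closedness of the monotonicity condition \(|u'|>0\) on the non-oscillatory side---the paper's own proof is terser than yours and only explicitly argues that double zeros cannot form and that zeros cannot migrate to the interface (which forces \(u\equiv 0\) via ODE uniqueness and the transmission condition). Your plan to control this via the sign of \(\lambda c+\kappa u^2\) and a phase-plane argument is fine near the bifurcation (there the coefficient has a fixed sign and convexity on \(\Omega_-\) rules out interior critical points), but be aware that globally along \(\mathcal{C}_j\) the parameter \(\lambda\) changes sign and, for general \(\kappa\in\rmL^\infty(\Omega)\), the ODE on \(\Omega_-\) is non-autonomous, so a pure phase-plane symmetry argument is not available; you will need to combine the weak monotonicity of the limit with ODE uniqueness at the endpoints and interface (as the paper does implicitly) rather than rely on an autonomous phase portrait.
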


The seemingly complicated ordering of the eigenvalues is exclusively motivated by the nodal patterns given by
\cref{item:nodal_neg,item:nodal_zero,item:nodal_pos}.
Here, \( \abs{u'} > 0 \) on \( \ov{\Omega_\pm} \) means that the continuous extension of \( \abs{u'} : \Omega_\pm \to \R \) to \( \ov{\Omega_\pm} \) is positive.
We stress that nontrivial solutions \( u \) are smooth away from the interface \( x=0 \) and continuous at \( x=0 \), but they are not continuously differentiable at this point.
In fact, \( \sigma u' \) is continuous on \( \ov{\Omega} \) so that \( u'(0) \) does not exist in the classical sense.
In the following \cref{sec02} our results are illustrated with the aid of bifurcation diagrams.

\section{Visualization of bifurcation results via PDE2path}\label{sec02}

In this section, we illustrate our theoretical results  of \cref{thm:bif} and \cref{cor:bif1D} with numerical bifurcation diagrams.
These diagrams show the value of \( \lambda \) on the \( x \)-axis and the \( \rmL^2 \)-norm of solutions \( u \)
for that \( \lambda \) on the \( y \)-axis. Thereby, (numerical) bifurcation diagrams allow to get an overview of
the ``structure'' of solutions and, in particular, to visualize the connected components \( \mathcal{C}_j \)
of \cref{thm:bif}. The results were obtained with the package pde2path~\cite{UWR14, DRUW14}, version 2.9b and using Matlab 2018b.
The code to reproduce the numerical results is available on Zenodo with DOI \href{https://doi.org/10.5281/zenodo.5707422}{\texttt{10.5281/zenodo.5707422}}.

\subsection{One-dimensional example}

We consider \( \Omega=(-5, 5) \) with \( \Omega_- = (-5, 0) \), \( \Omega_+ = (0,5) \) and \( c \equiv 1 \).
The diffusion coefficient \( \sigma \) is chosen piecewise constant, set \( \sigma_+ = 1 \) and compare two different values for \( \sigma_- \), namely \( \sigma_- \in \{-2, -1.005\} \).
We consider \cref{eq:NLeq1D} in this special case, \emph{i.e.},
\[
  -\frac{\di{}}{\di{x}}\big(\sigma(x) \, u'\big) - \lambda  u =  u^3 \quad
  \text{in } \Omega, \qquad
  u\in H_0^1(\Omega).
\]
We choose a tailored finite element mesh which is refined close to \( \Gamma = \{0\} \) in the following way.
We start with an equidistant mesh with \( h = 2^{-9} \), \emph{i.e.}, \( \Omega \) is divided into \( 5120 \) equal subintervals.
Then, we refine all intervals which are closer than \( 0.1 \) to \( \Gamma \) five times by halving them.
This finally means that  intervals close to  \( \Gamma \) are only \( 2^{-14} \) long.
We point out that this finely resolved mesh is required to faithfully represent the interface behavior at \( \Gamma=\{0\} \), especially for \( \sigma_-=-1.005 \).
An insufficient mesh resolution does not only influence the numerical quality  of the eigenfunctions or solutions along the branches, but also the (qualitative picture) of the bifurcation diagram.
We validated our results by assuring that a further refinement of the mesh (halving all intervals) leads to
the same results and conclusions.

\subsubsection{Bifurcation diagrams and eigenfunctions for different contrasts}

We first investigate whether \( \frac{\sigma_+}{\sigma_-} \approx  -1 \) influences the bifurcation diagrams.
For this, we allow \( \lambda \) to vary in the interval \( [-10, 15] \).
The bifurcation diagrams are depicted in \cref{fig:1d:bif} for \( \sigma_-=-2 \) and \( \sigma_-=-1.005 \).

\begin{figure}[hbtp]
  \centering
  \includegraphics[width=0.95\textwidth,trim=20mm 0mm 25mm 5mm,
    clip=true,keepaspectratio=false]{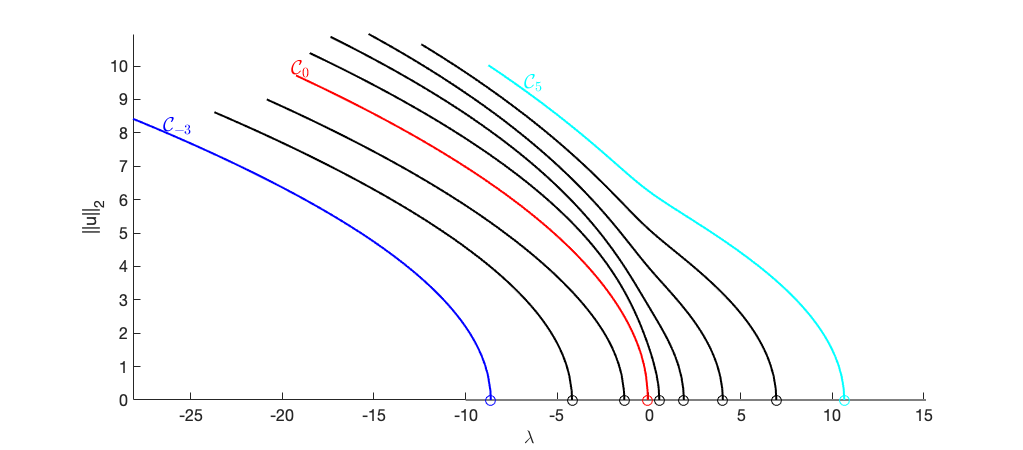}
  \includegraphics[width=0.95\textwidth,trim=20mm 0mm 25mm 5mm,
    clip=true,keepaspectratio=false]{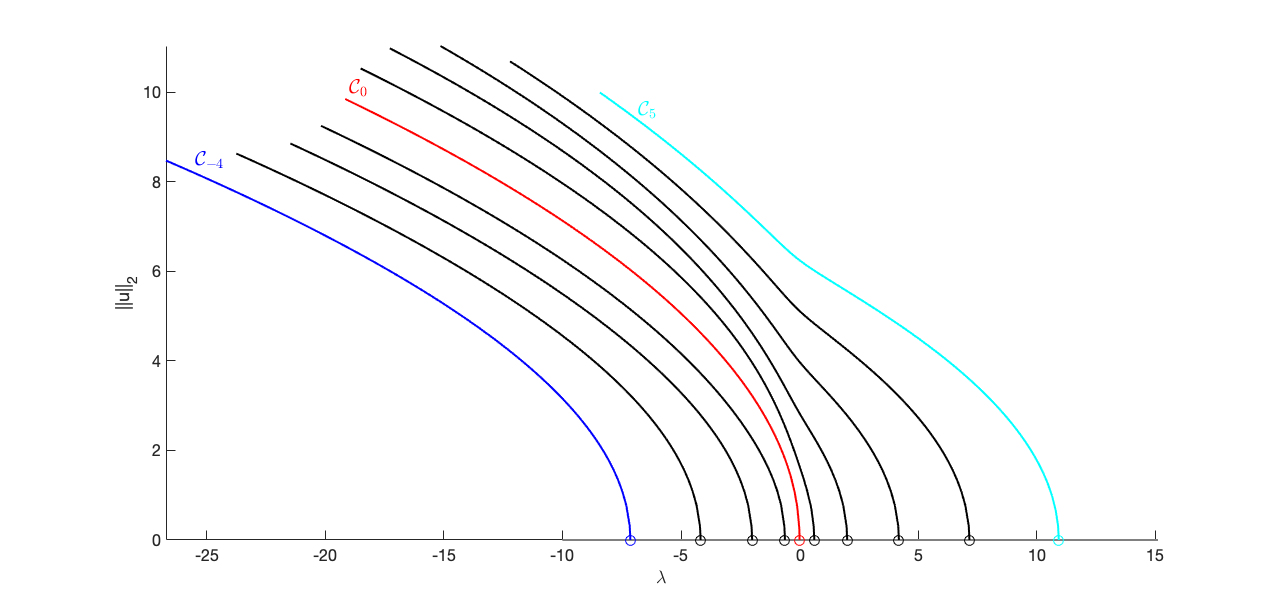}
  \caption{Bifurcation diagrams for \( \sigma_-=-2 \) (top) and \( \sigma_-=-1.005 \) (bottom).}%
  \label{fig:1d:bif}
\end{figure}

Qualitatively, they are quite similar with clearly separated, apparently unbounded branches without secondary
bifurcations.
Note that the bending direction of the branches to the left is determined by the sign of the
nonlinear term and can be predicted by the bifurcation formulae (I.6.11) in~\cite{Kielhoefer_Bifurcation}.
The first striking phenomenon due to the sign-changing coefficient is the occurrence of eigenvalues and, hence, bifurcation points, with negative value.
In fact, for sign-changing \( \sigma \), there are two families  of eigenvalues diverging to \( \pm \infty \),  see \cref{thm:bif}.
We use the following labeling of branches (\emph{cf.} \cref{fig:1d:bif}): The branch starting closest to zero
is labeled as \( \mathcal{C}_0 \) and the branches for negative and positive bifurcation points are labeled as
\( \mathcal{C}_{-i} \) and \( \mathcal{C}_{i} \) with \( i \in \mathbb N \), respectively. The absolute  value of
\( i \) increases as \( |\lambda| \to \infty \).
In our setting  this labeling of the branches is consistent with the notation introduced in \cref{sec03}.

Besides the eigenvalues, we also study the eigenfunctions by considering the solutions at the first point of each branch in \cref{fig:1d:eigfct}.
We display the branch name according to \cref{fig:1d:bif} as well as the value of \( \lambda \) at the bifurcation point.

\begin{figure}[hbtp]
  \includegraphics[width=0.29\textwidth, trim=33mm 80mm 37mm 85mm, clip=true]{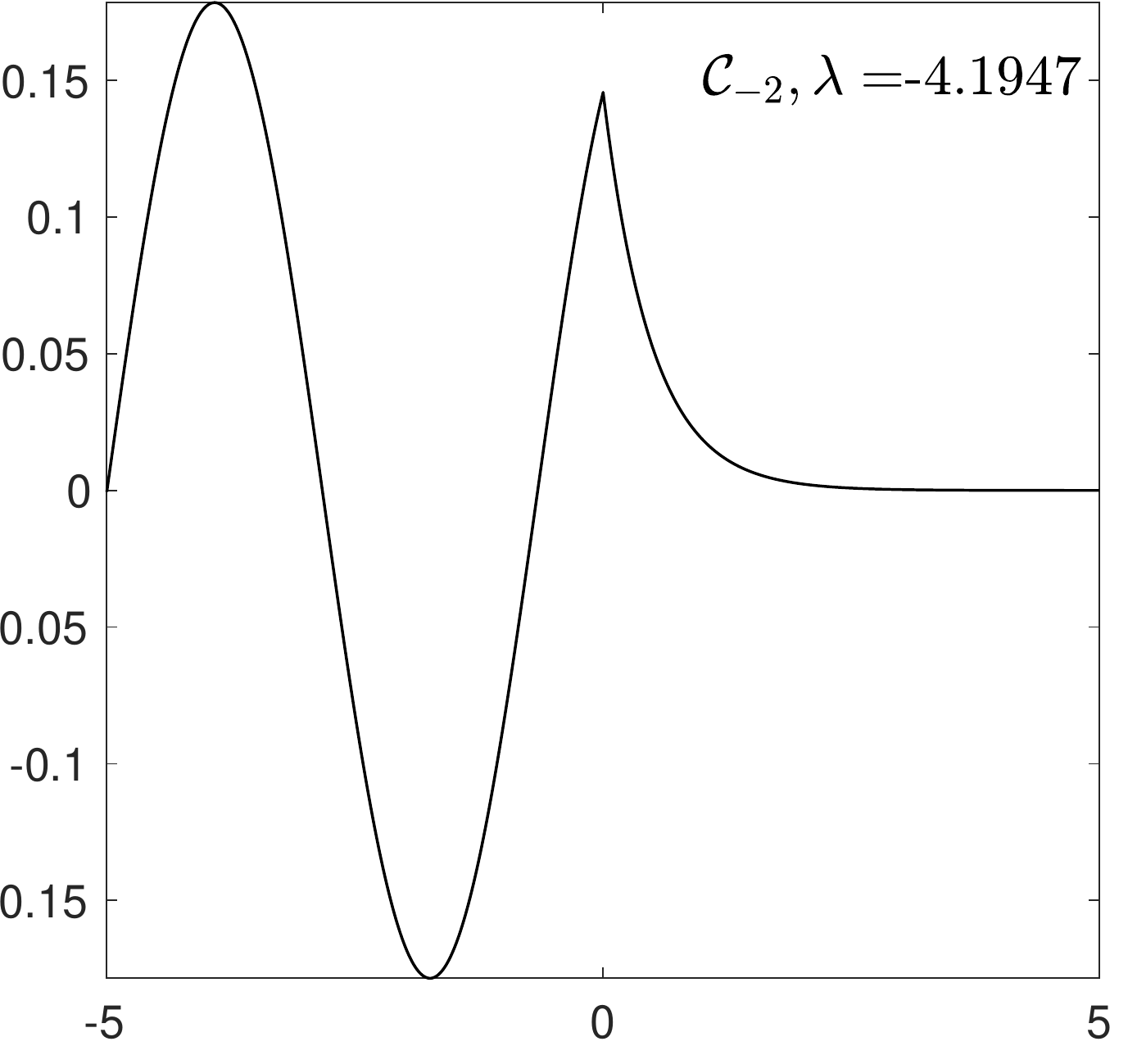}%
  \hspace{2ex}%
  \includegraphics[width=0.315\textwidth, trim=25mm 80mm 33mm 85mm, clip=true]{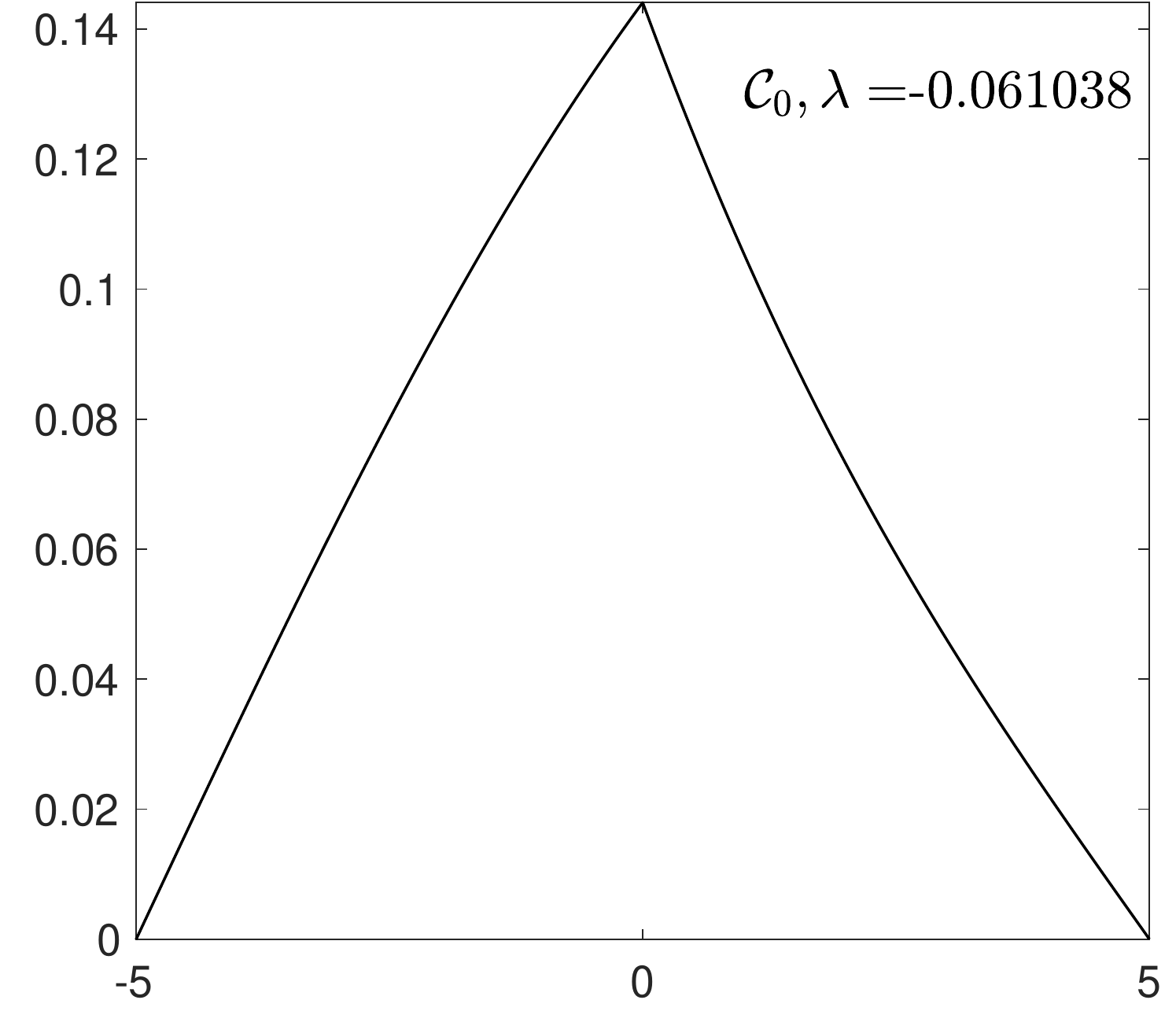}%
  \hspace{2ex}%
  \includegraphics[width=0.29\textwidth, trim=33mm 80mm 37mm 85mm, clip=true]{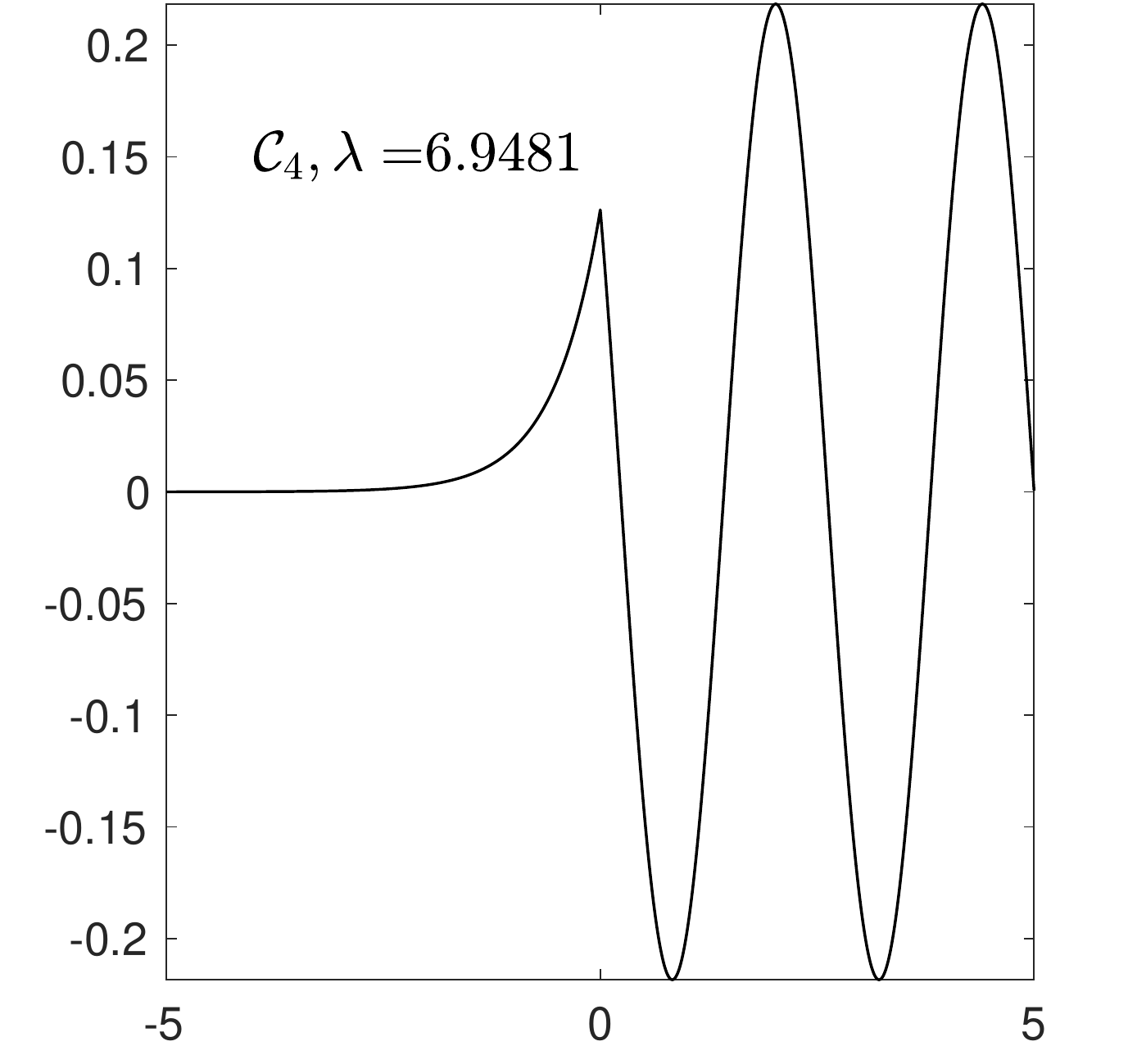}
  \includegraphics[width=0.29\textwidth, trim=19mm 80mm 24mm 85mm, clip=true]{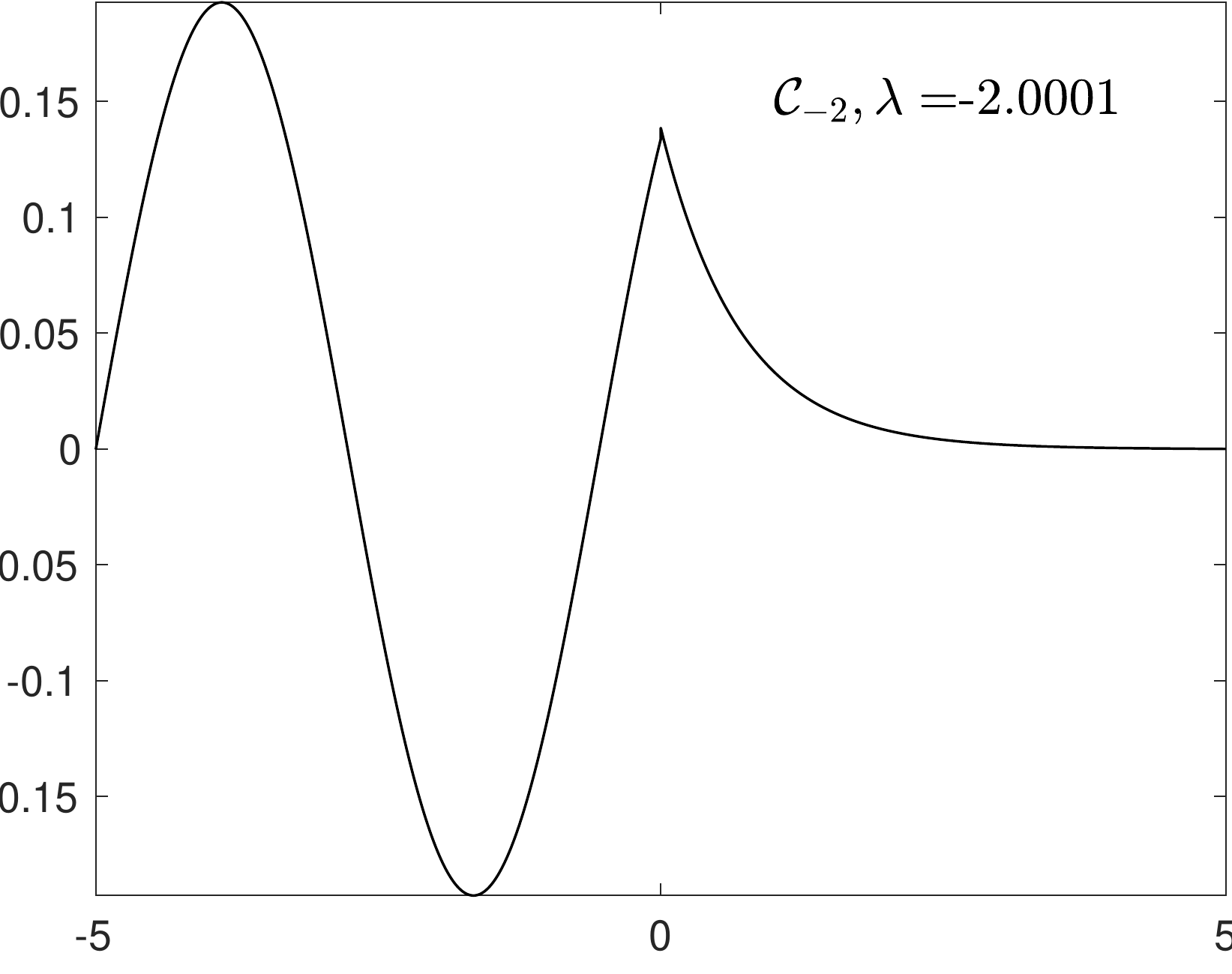}%
  \hspace{2ex}%
  \includegraphics[width=0.315\textwidth, trim=12mm 80mm 20mm 85mm, clip=true]{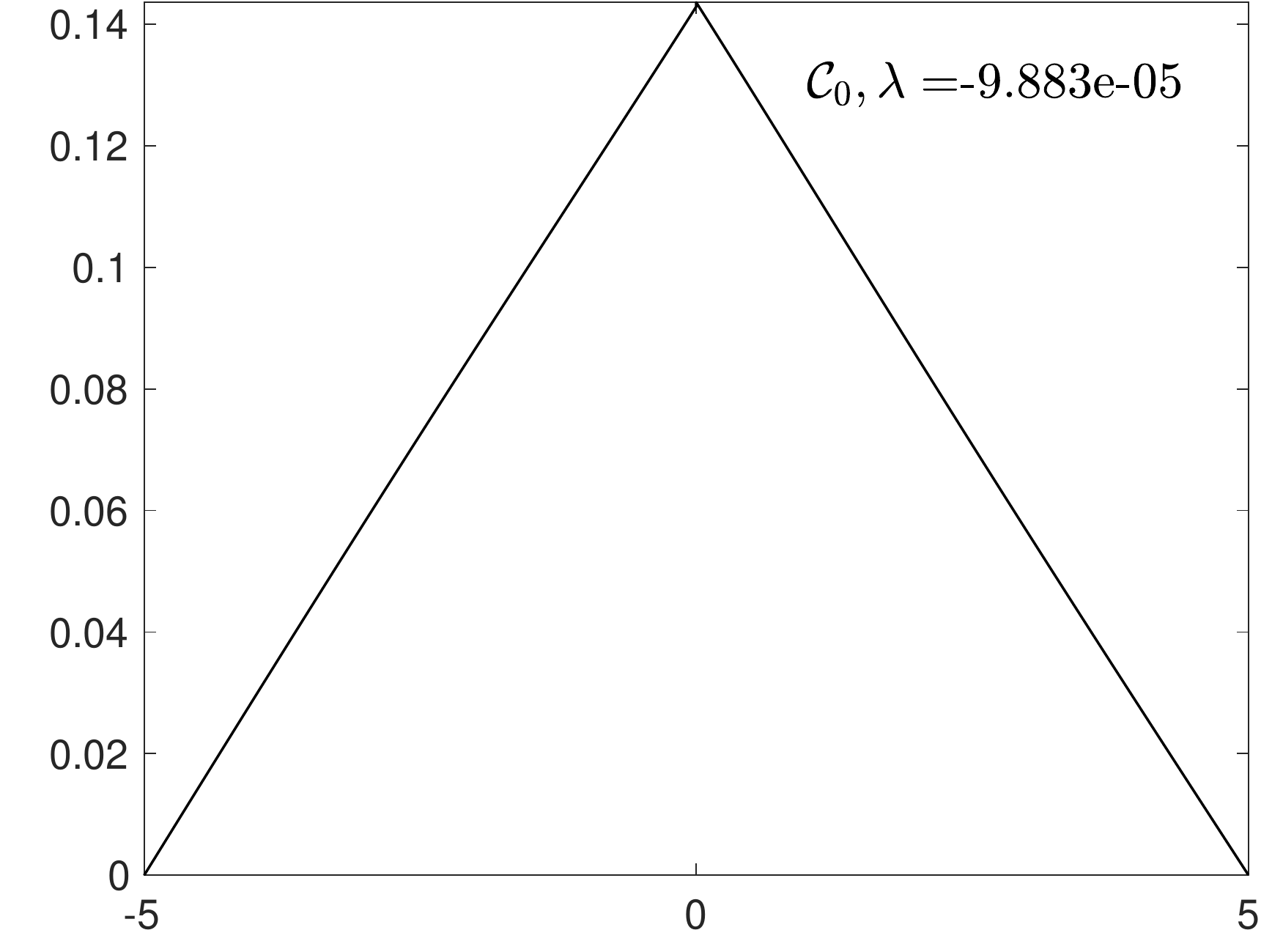}%
  \hspace{2ex}%
  \includegraphics[width=0.29\textwidth, trim=19mm 80mm 24mm 85mm, clip=true]{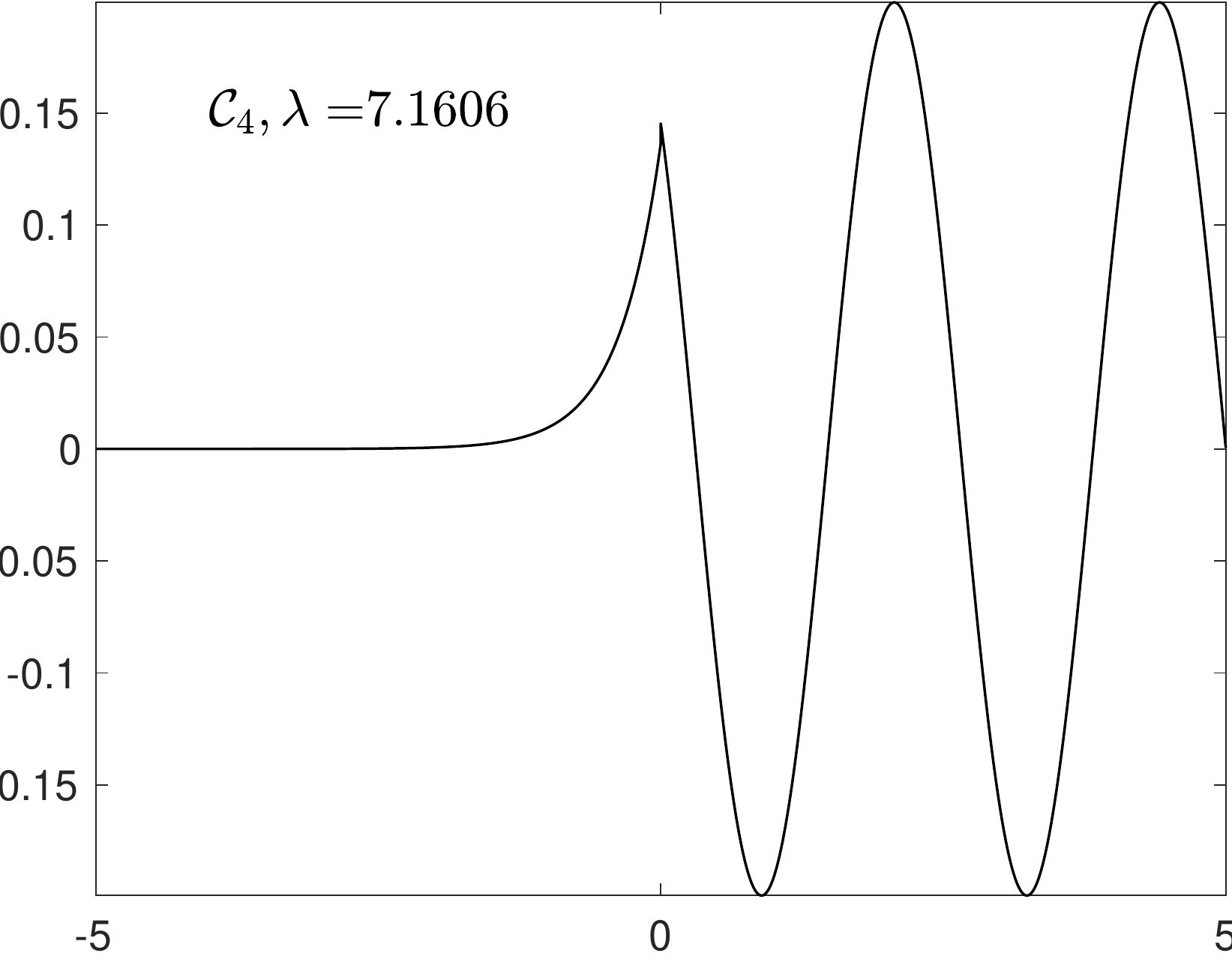}
  \caption{First solution on branches \( \mathcal{C}_{-2} \), \( \mathcal{C}_0 \) and \( \mathcal{C}_4 \)
    (from left to right) for \( \sigma_-=-2 \) (top row) as well as \( \sigma_-=-1.005 \) (bottom row).}%
  \label{fig:1d:eigfct}
\end{figure}

As (partly) expected from~\cite{CarCheCia_Eigenvalue}, we make the following observations.
Firstly, the solutions are concentrated (w.r.t.~the \( \rmL^2 \)-norm) on the ``oscillatory part'',
which  is \( \Omega_- \) for negative eigenvalues (left column of \cref{fig:1d:eigfct}) and \( \Omega_+ \) for
positive eigenvalues (right column of \cref{fig:1d:eigfct}).
The eigenvalue closest to zero (from which \( \mathcal{C}_0 \) emanates) plays a special role (middle column of \cref{fig:1d:eigfct}).
Secondly, with increasing \( |\lambda| \), the number of maxima and minima increases as one observes also for
the eigenfunctions of the Laplacian.
Thirdly, the transmission condition at \( \Gamma \) requires the (normal) derivative of \( u \) to change
sign, such that the solutions have a ``tip'' at the interface.
Taking a closer look at the bifurcation values and the corresponding solutions in \cref{fig:1d:eigfct}, we
note that \( \mathcal{C}_0 \) starts much closer to zero for \( \sigma_-=-1.005 \) than for \( \sigma_-=-2 \).
This illustrates the theoretical expectation that due to the symmetry of the domain \( \Omega \), we have an
eigenvalue approaching zero for the contrast going to \( -1 \).
Moreover, we observe a certain shrinking of the negative bifurcation values towards zero when the contrast
approaches \( -1 \).

\subsubsection{Patterns of solutions along branches}

We now take a closer look at how solutions evolve along branches --- depending on whether the corresponding bifurcation value is negative, close to zero or positive.
According to the previous discussion, we focus on \( \sigma_-=-1.005 \) in the following because it shows the phenomena in a particularly pronounced form and is close to the interesting ``critical'' contrast of \( -1 \).
In general, we observe that a certain limit pattern or profile of the solution evolves on each branch which remains qualitatively stable (values of maxima, minima and plateaus of course change with \( \lambda \)).
As example for a  negatively indexed bifurcation branch away from zero, we consider \( \mathcal{C}_{-2} \), \emph{cf.} \cref{fig:1d:bif}.
The first, \( 50 \)th, and \( 100 \)th solution on the branch are depicted in \cref{fig:1d:solb3}. As described above, the solution concentrates in \( \Omega_- \) where it oscillates, while it decays exponentially in \( \Omega_+ \).
This profile remains stable over the branch, but we note that the maxima and minima become wider along the branch.
This widening of the extrema in \( \Omega_- \) is also noted for the other branches emanating from a negative bifurcation point. Yet, the more oscillations occur for the branches as \( \lambda \to -\infty \), the less pronounced the effect becomes because we have more extrema over the same interval.
We emphasize that this effect of widening extrema is specific to the sign-changing case and especially to bifurcations starting at negative \( \lambda \).
\begin{figure}[hbtp]
  \includegraphics[width=0.31\textwidth, trim=19mm 80mm 24mm 85mm, clip=true]{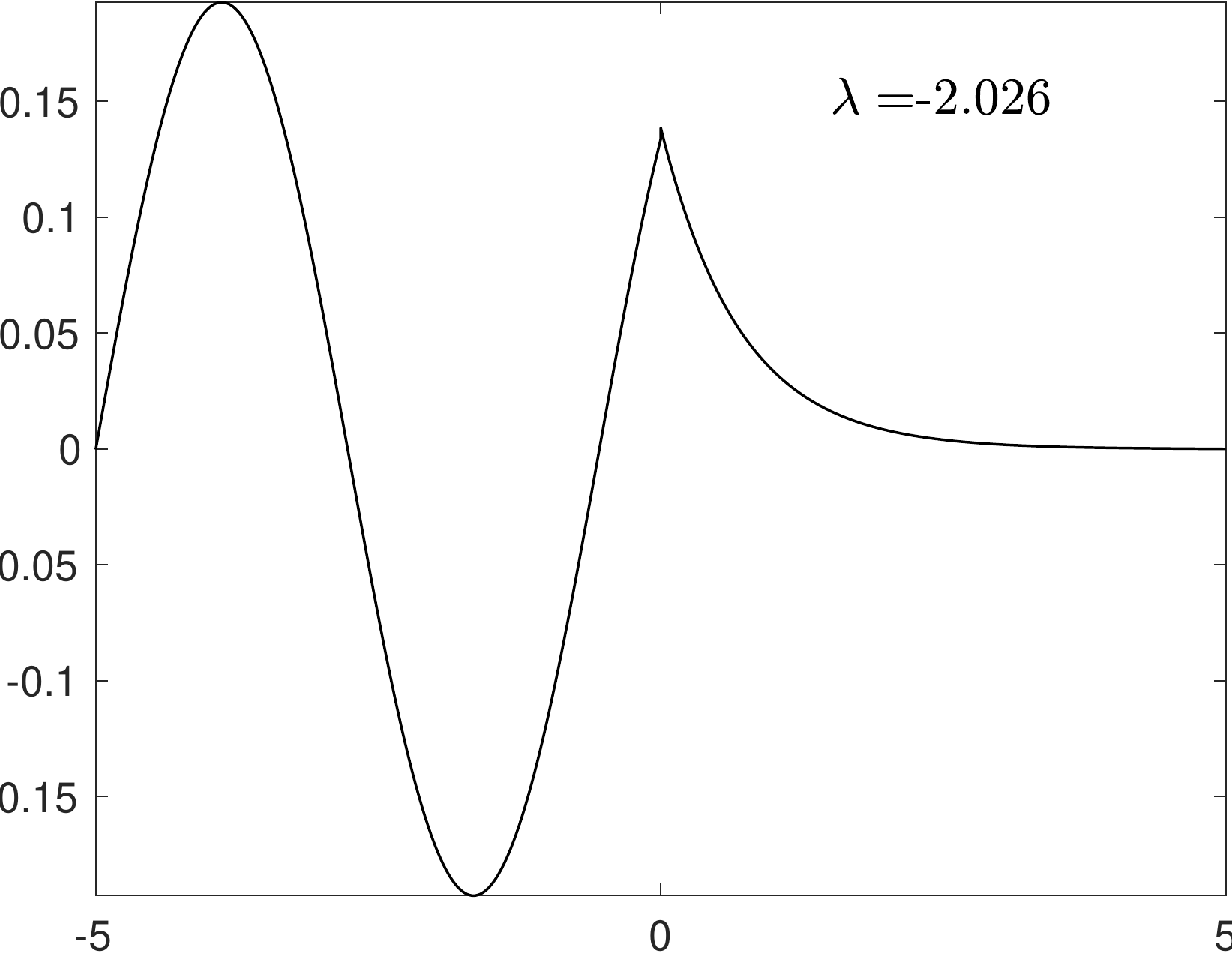}%
  \includegraphics[width=0.31\textwidth, trim=20mm 80mm 24mm 85mm, clip=true]{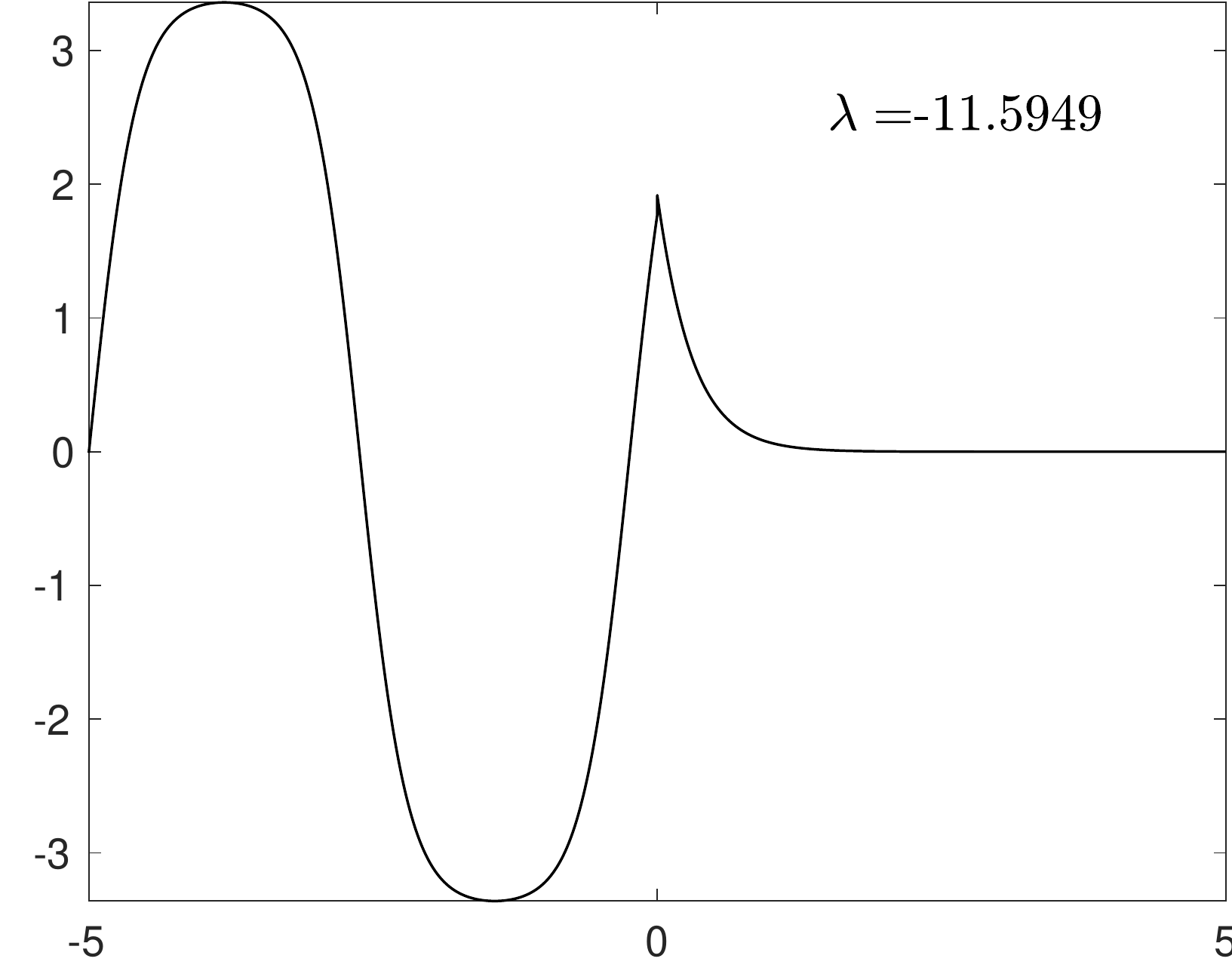}%
  \includegraphics[width=0.31\textwidth, trim=20mm 80mm 24mm 85mm, clip=true]{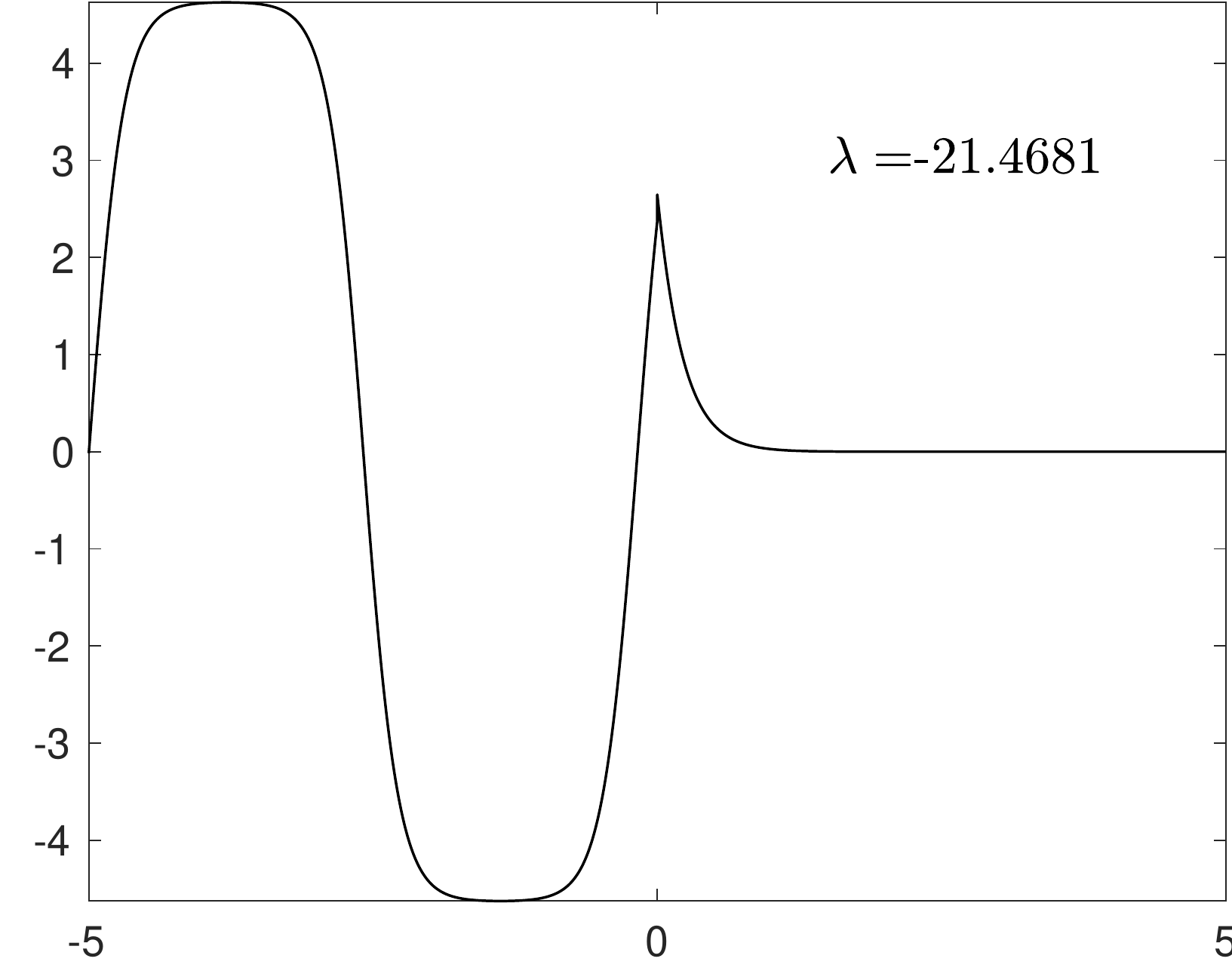}%
  \caption{Solution at first, \( 50 \)th, \( 100 \)th point of \( \mathcal{C}_{-2} \) for
    \( \sigma_-=-1.005 \).}%
  \label{fig:1d:solb3}
\end{figure}

As an example for a  positively indexed  bifurcation branch away from zero, we study the
branch \( \mathcal{C}_5 \), cf. \cref{fig:1d:bif}.
As expected, we observe in \cref{fig:1d:solb10} that the first solution concentrates on \( \Omega_+ \), where it oscillates as typical for an eigenfunction of the Laplacian, and shows an exponential decay in \( \Omega_- \).
The oscillatory pattern in \( \Omega_+ \) is preserved along the branch. The behavior in \( \Omega_- \) changes when \( \lambda \) gets negative: Instead of an exponential decay to zero, we now see an exponential decay to (almost) a plateau (with value \( \pm\sqrt{-\lambda} \)) and a sharp transition to the zero boundary value.
Once this pattern is established, it remains stable as well.
This appearance of a plateau different from zero is also a specific phenomenon of the sign-changing case.
\begin{figure}[hbtp]
  \includegraphics[width=0.23\textwidth, trim=19mm 78mm 24mm 85mm, clip=true]{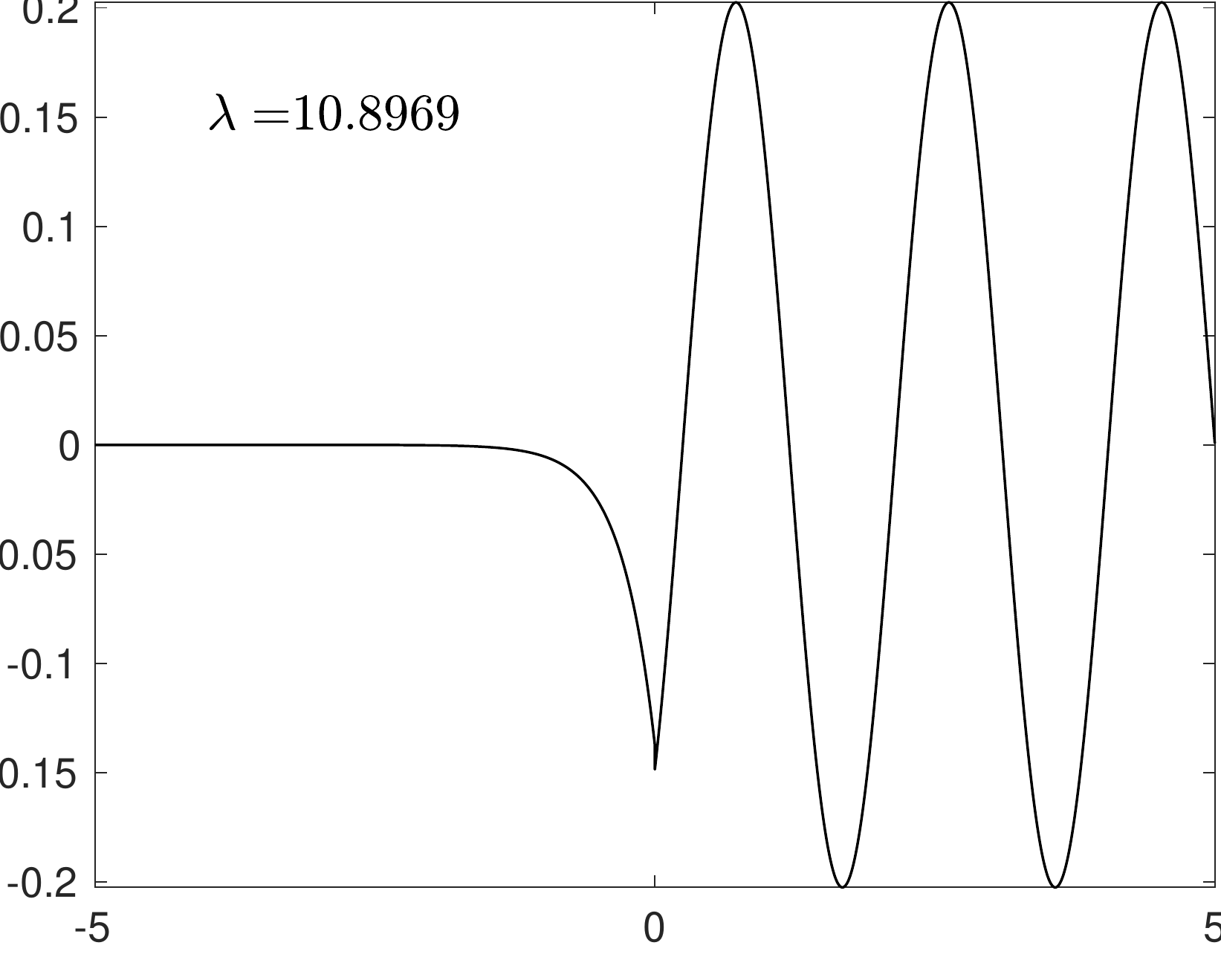}%
  \hspace{2ex}%
  \includegraphics[width=0.23\textwidth, trim=20mm 80mm 24mm 85mm, clip=true]{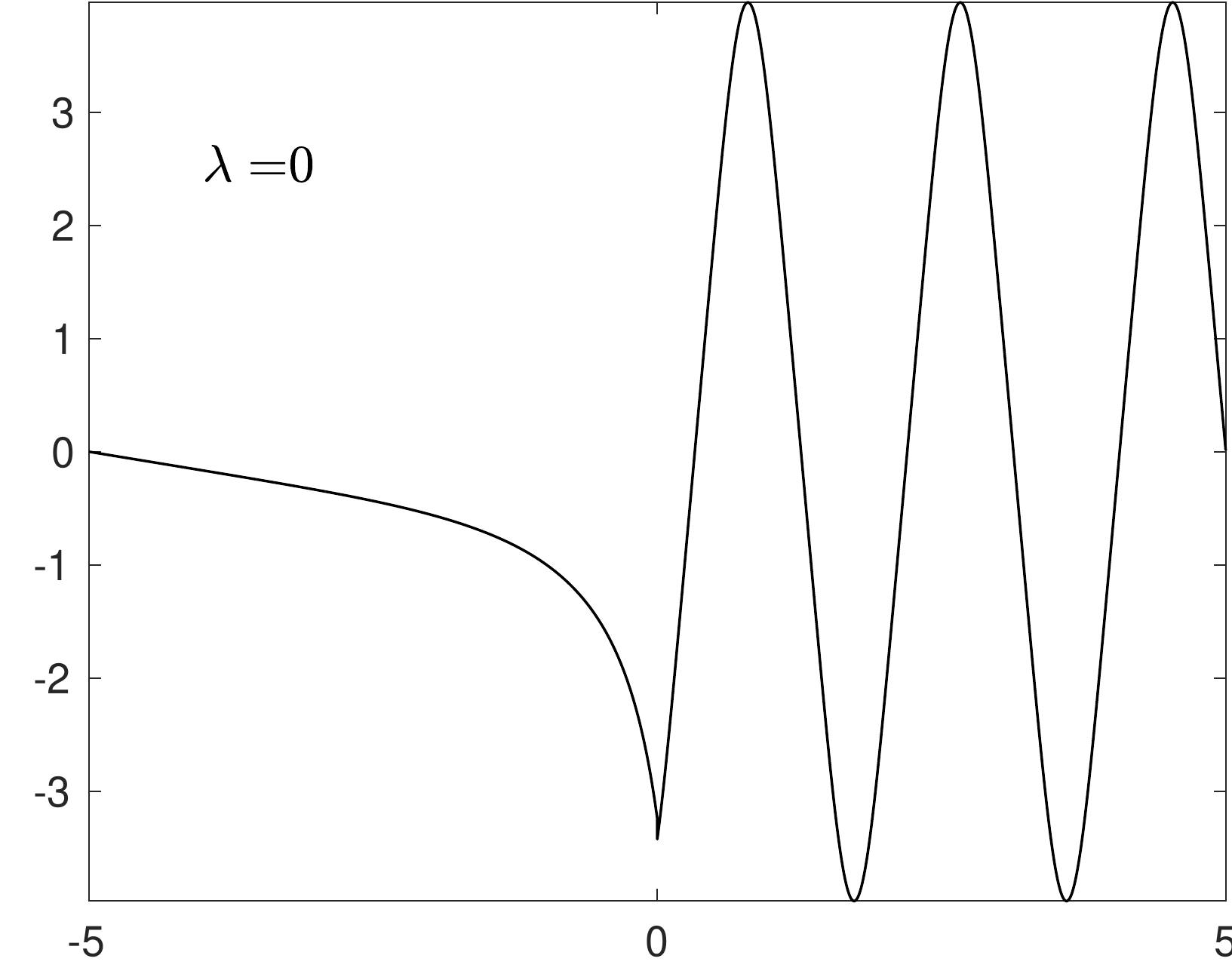}%
  \hspace{2ex}%
  \includegraphics[width=0.23\textwidth, trim=20mm 80mm 24mm 85mm, clip=true]{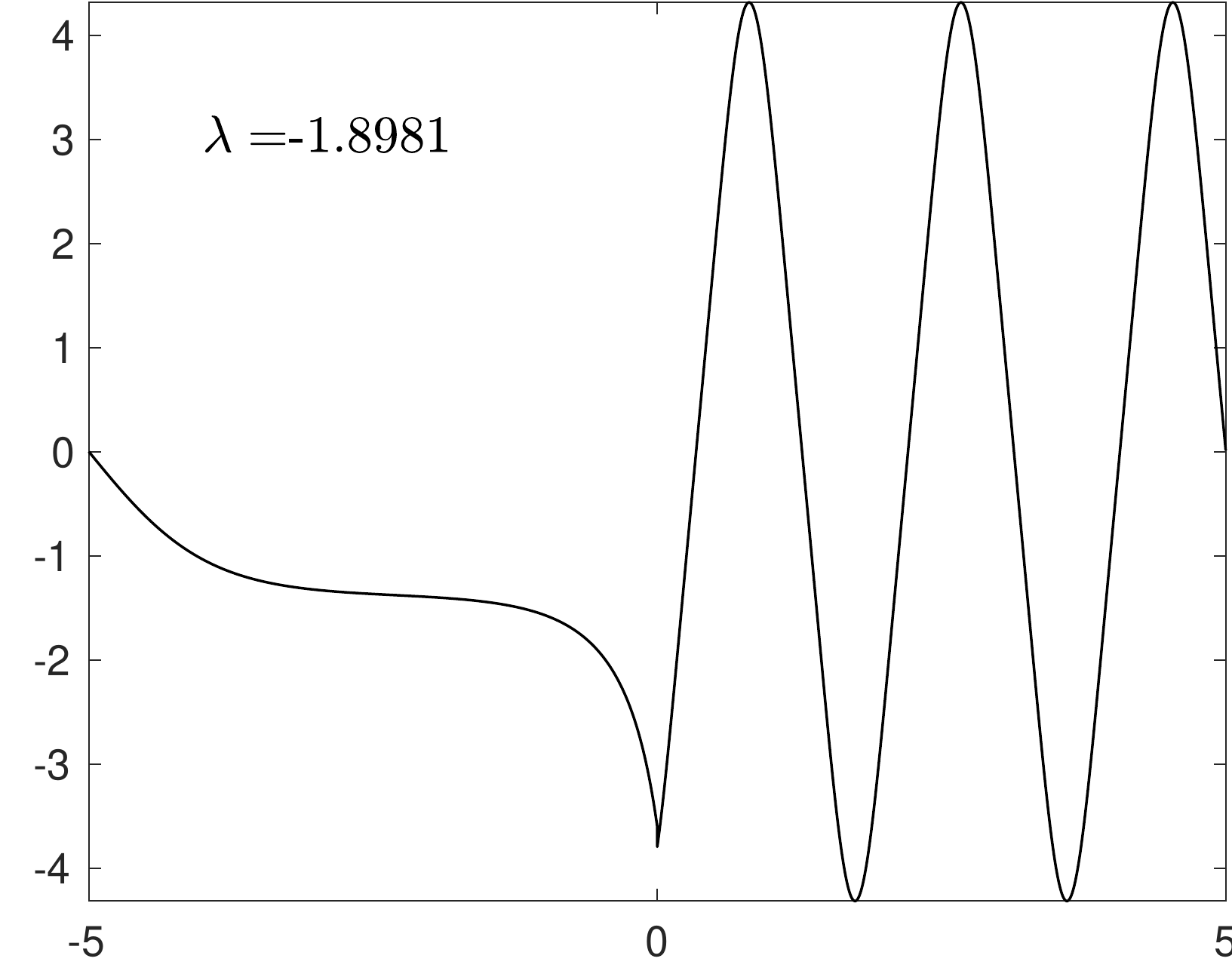}%
  \hspace{2ex}%
  \includegraphics[width=0.23\textwidth, trim=20mm 80mm 24mm 85mm, clip=true]{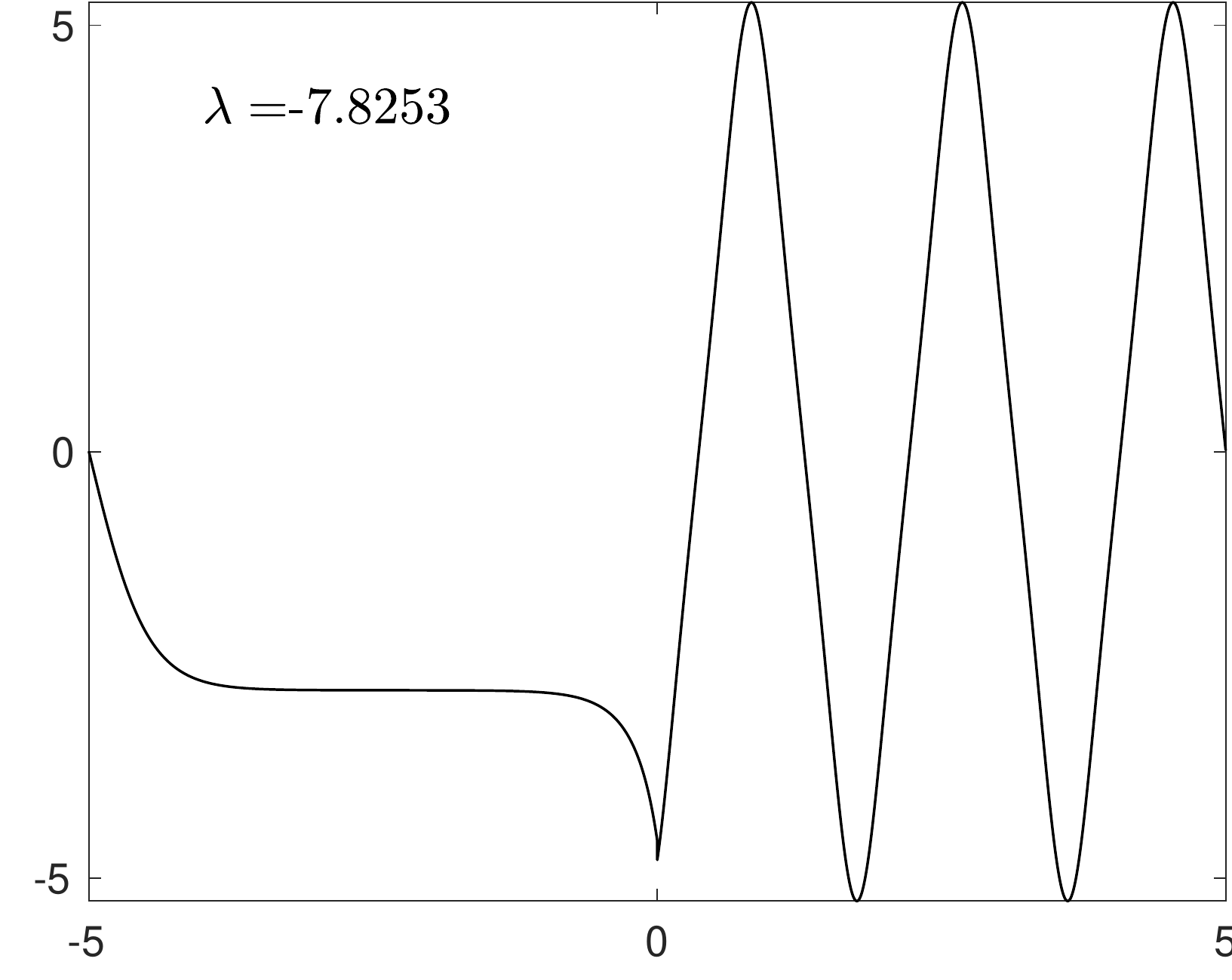}%
  \caption{Solution at first, \( 60 \)th, \( 70 \)th, \( 100 \)th point of \( \mathcal{C}_5 \) for
    \( \sigma_-=-1.005 \).}%
  \label{fig:1d:solb10}
\end{figure}

The occurrence of a plateau in \( \Omega_- \) is also observed in \cref{fig:1d:solb5} for the branch \( \mathcal{C}_0 \) closest to zero, cf. \cref{fig:1d:bif}.
While the first solution has a similar shape in \( \Omega_+ \) and \( \Omega_- \) with a linear decay in each
subdomain, the ensuing solutions on the branch quickly evolve a plateau in \( \Omega_- \) and an exponential
decay in \( \Omega_+ \). This pattern then remains stable along the branch.  
All in all, we observe a certain stability of profiles along branches. The form of the profiles depends on
where the bifurcation starts. Moreover, we always recognize a concentration to the oscillatory part
and further the establishment of plateaus different from zero in \( \Omega_- \). As already emphasized, both
effects are specific to the sign-changing case.
This qualitative description of solutions seems to transfer to other contrasts, but the
bifurcation points closest to zero  and the (quantitative) decay in \( \Omega_{\pm} \) significantly depend  on
the contrast as already discussed above.
\begin{figure}[hbtp]
  \includegraphics[width=0.31\textwidth, trim=19mm 80mm 24mm 85mm, clip=true]{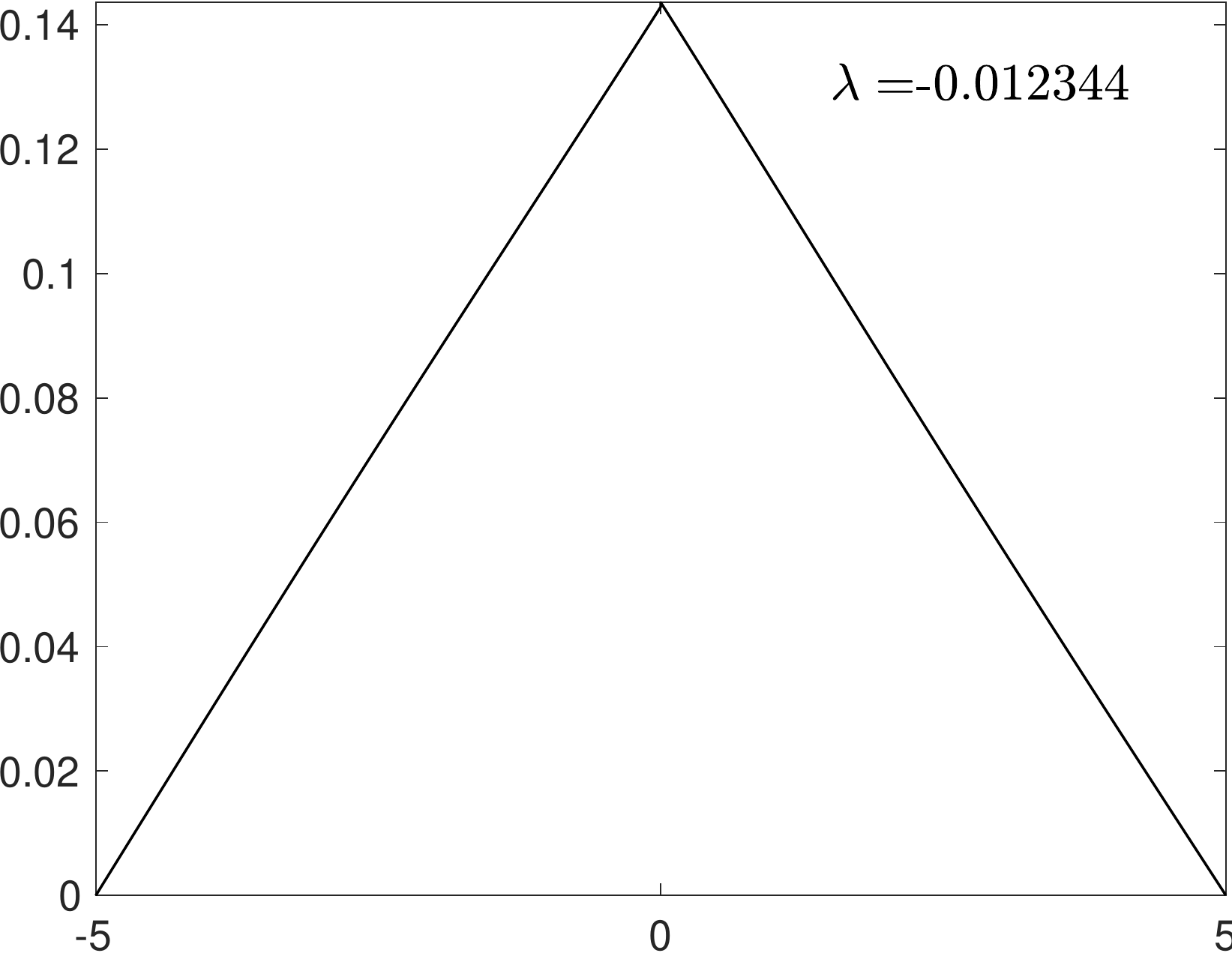}%
  \hspace{2ex}%
  \includegraphics[width=0.31\textwidth, trim=20mm 80mm 24mm 85mm, clip=true]{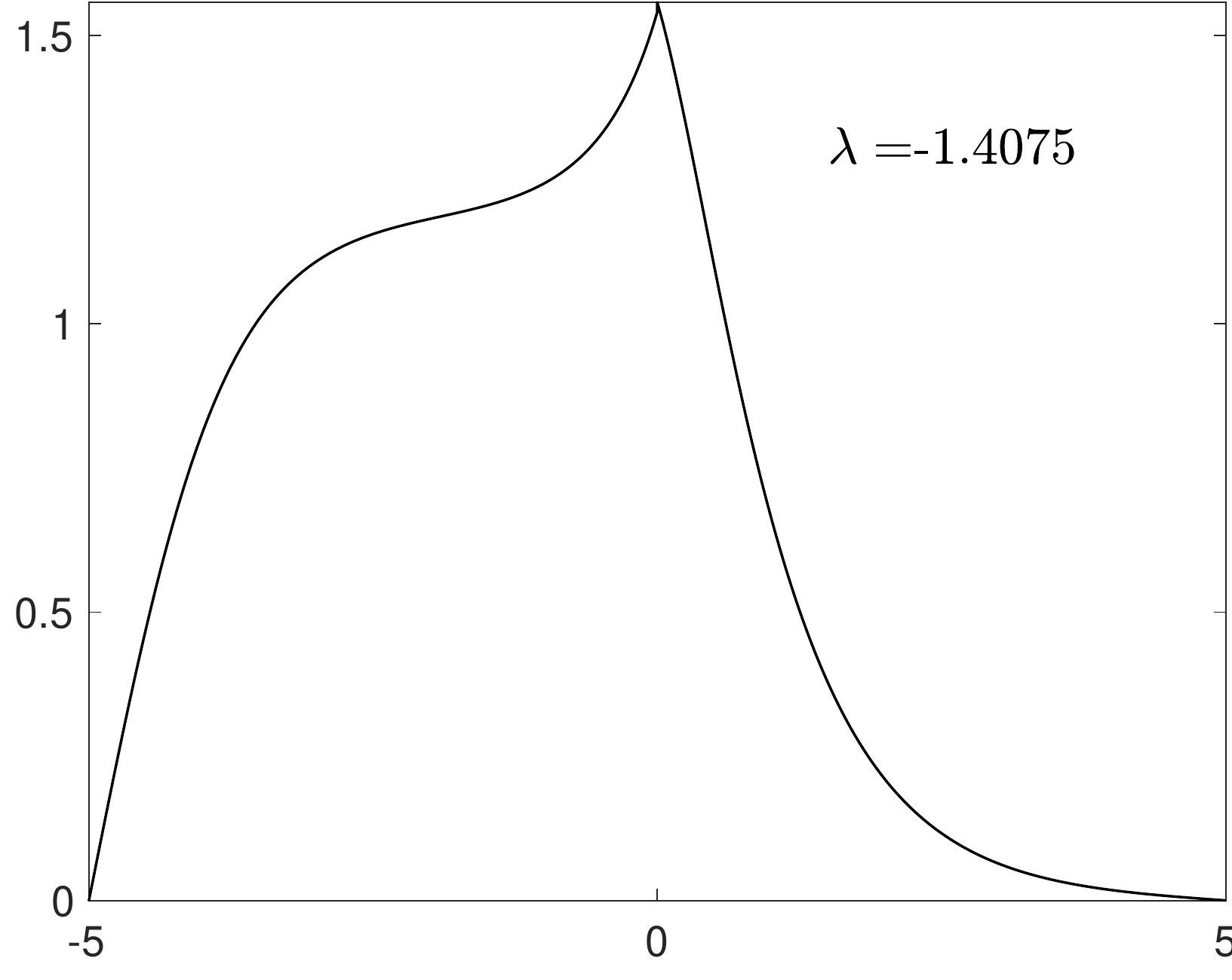}%
  \hspace{2ex}%
  \includegraphics[width=0.31\textwidth, trim=20mm 80mm 24mm 85mm, clip=true]{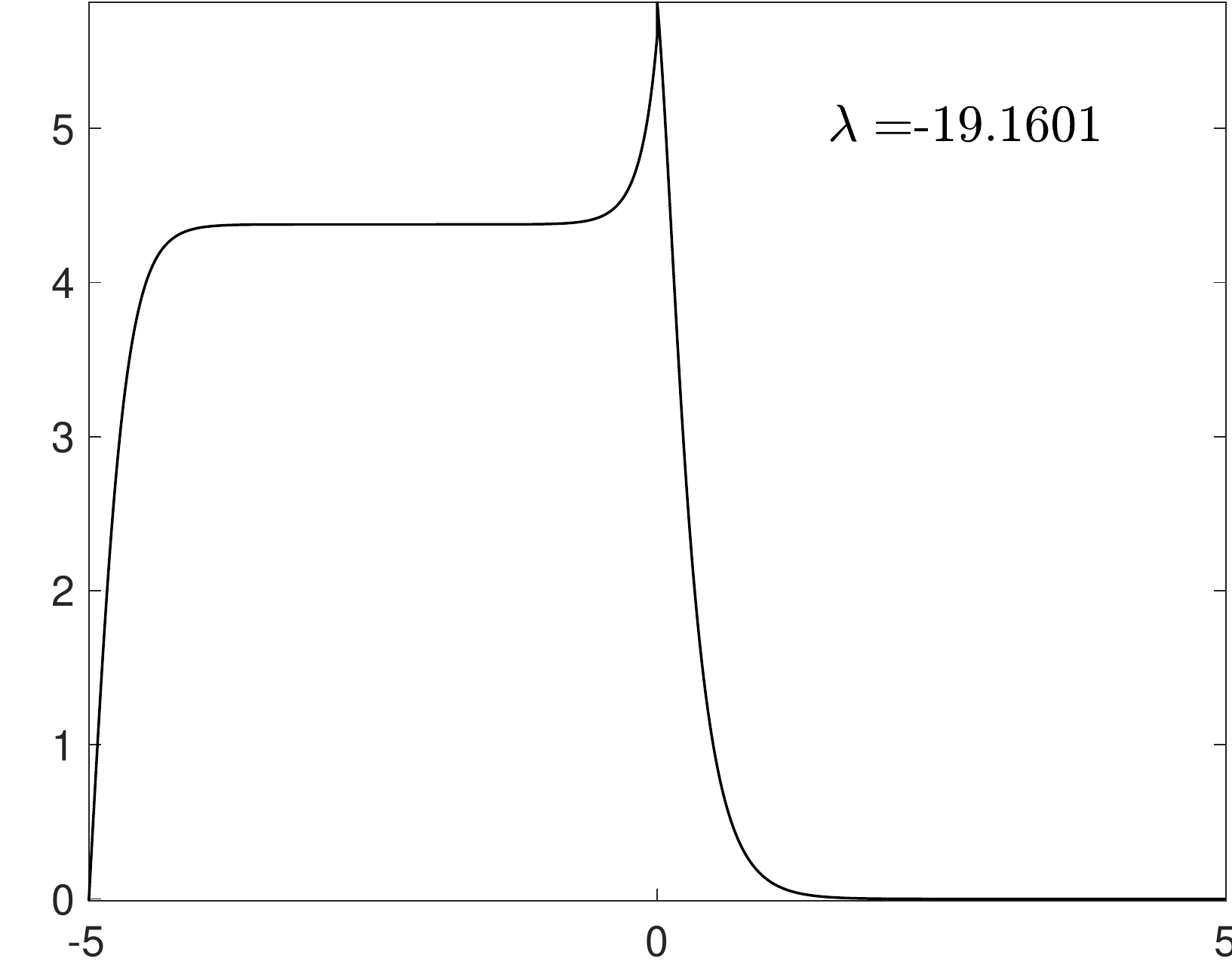}%
  \caption{Solution at first, \( 10 \)th,  \( 100 \)th point of \( \mathcal{C}_0 \) for
    \( \sigma_-=-1.005 \).}%
  \label{fig:1d:solb5}
\end{figure}

\subsection{Two-dimensional example}

We consider \( \Omega={(-2,2)}^2 \) with \( \Omega_-=(-2,0)\times (-2,2) \), as well
as \( \sigma_+=1 \) and \( \sigma_-=-2 \).
The finite element mesh is tailored similar to the one-dimensional experiment: We start with a symmetric uniform mesh with \( h=2^{-4} \) and refine three times all elements in the strip of width \( 0.1 \) around the interface \( \Gamma = \{0\}\times(-2, 2) \).
Note that our mesh satisfies the symmetry conditions laid out in~\cite{BBDhia_MeshRequirements}, which may be challenging in more complicated geometries though.

We focus on the behavior of solutions in this numerical experiment and let \( \lambda \) vary in \( [-12, 15] \).
There are three different types of eigenfunctions either concentrated on \( \Omega_- \), on \( \Gamma \), or on \( \Omega_+ \).
In contrast to the one-dimensional case, there are several different eigenfunctions concentrated on \( \Gamma \).
As before, the eigenfunctions concentrated on \( \Omega_- \) or \( \Gamma \) are associated with negative values of
\( \lambda \).
In \cref{fig:2d:Omegaminus,fig:2d:Gamma,fig:2d:Omegaplus}, we show the evolution of solutions along a branch for each of the three types described above.

\begin{figure}[hbtp]
  \includegraphics[width=0.31\textwidth, trim=12mm 75mm 22mm 83mm, clip=true]{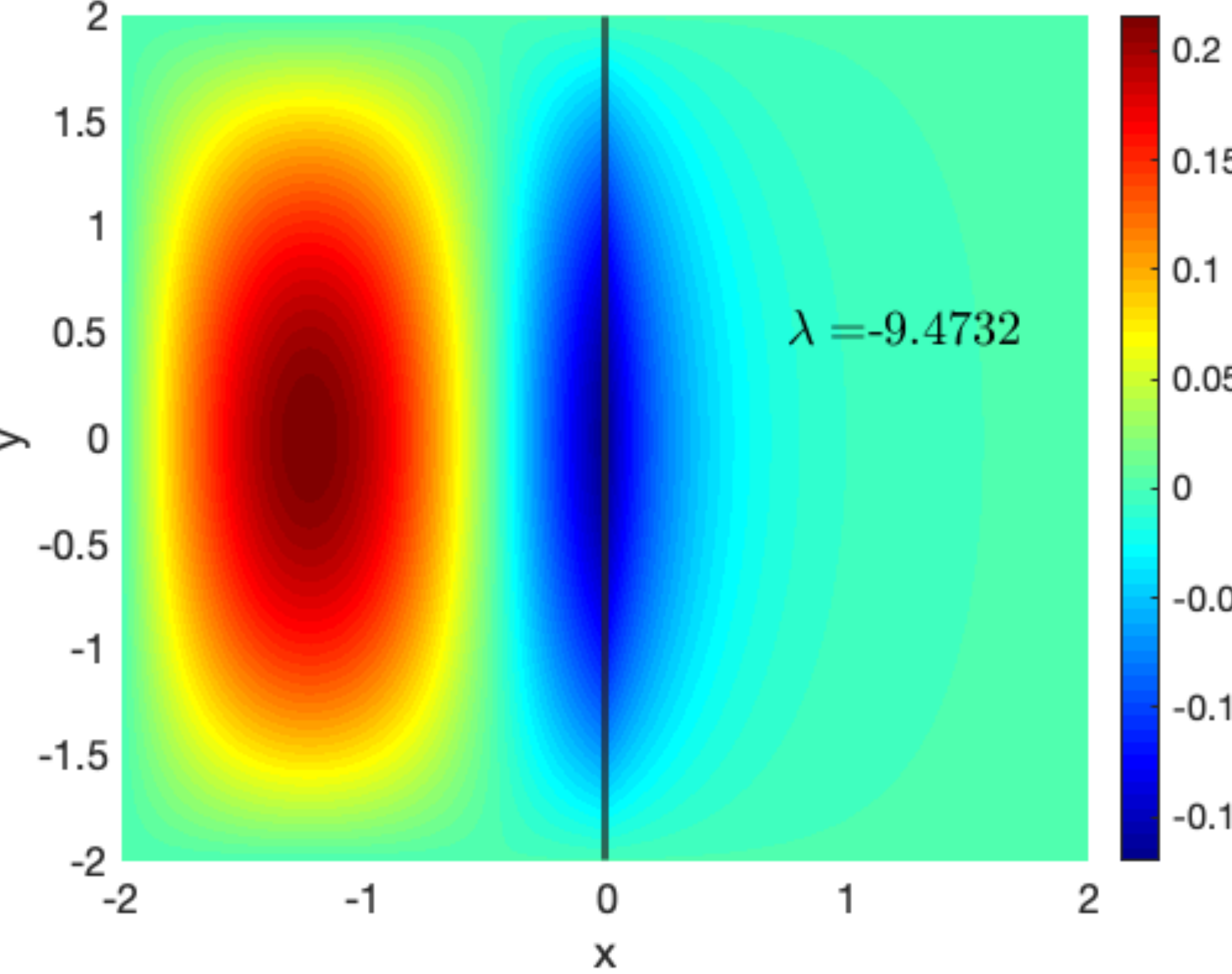}%
  \hspace{2ex}%
  \includegraphics[width=0.31\textwidth, trim=12mm 75mm 22mm 83mm, clip=true]{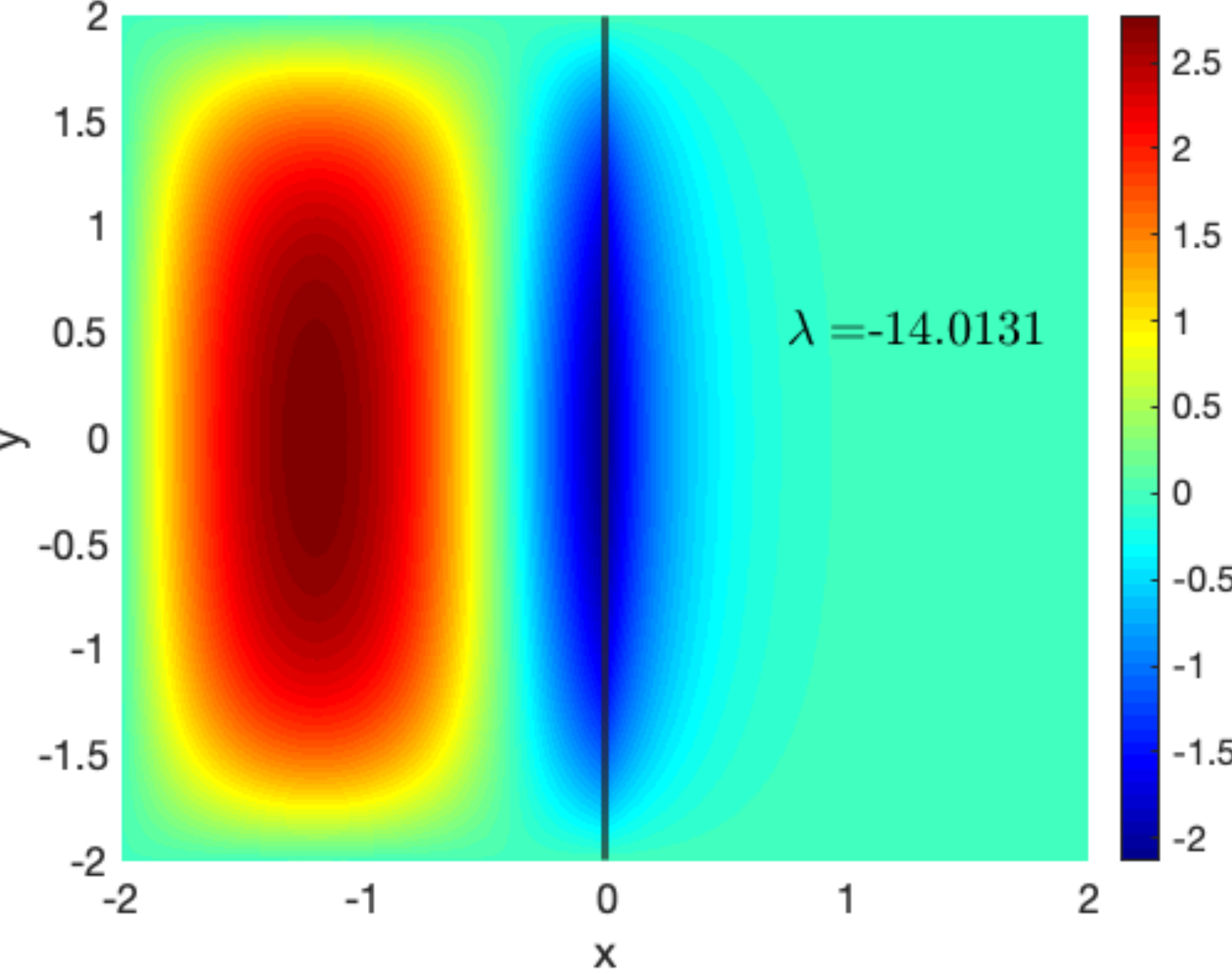}%
  \hspace{2ex}%
  \includegraphics[width=0.31\textwidth, trim=12mm 75mm 22mm 83mm, clip=true]{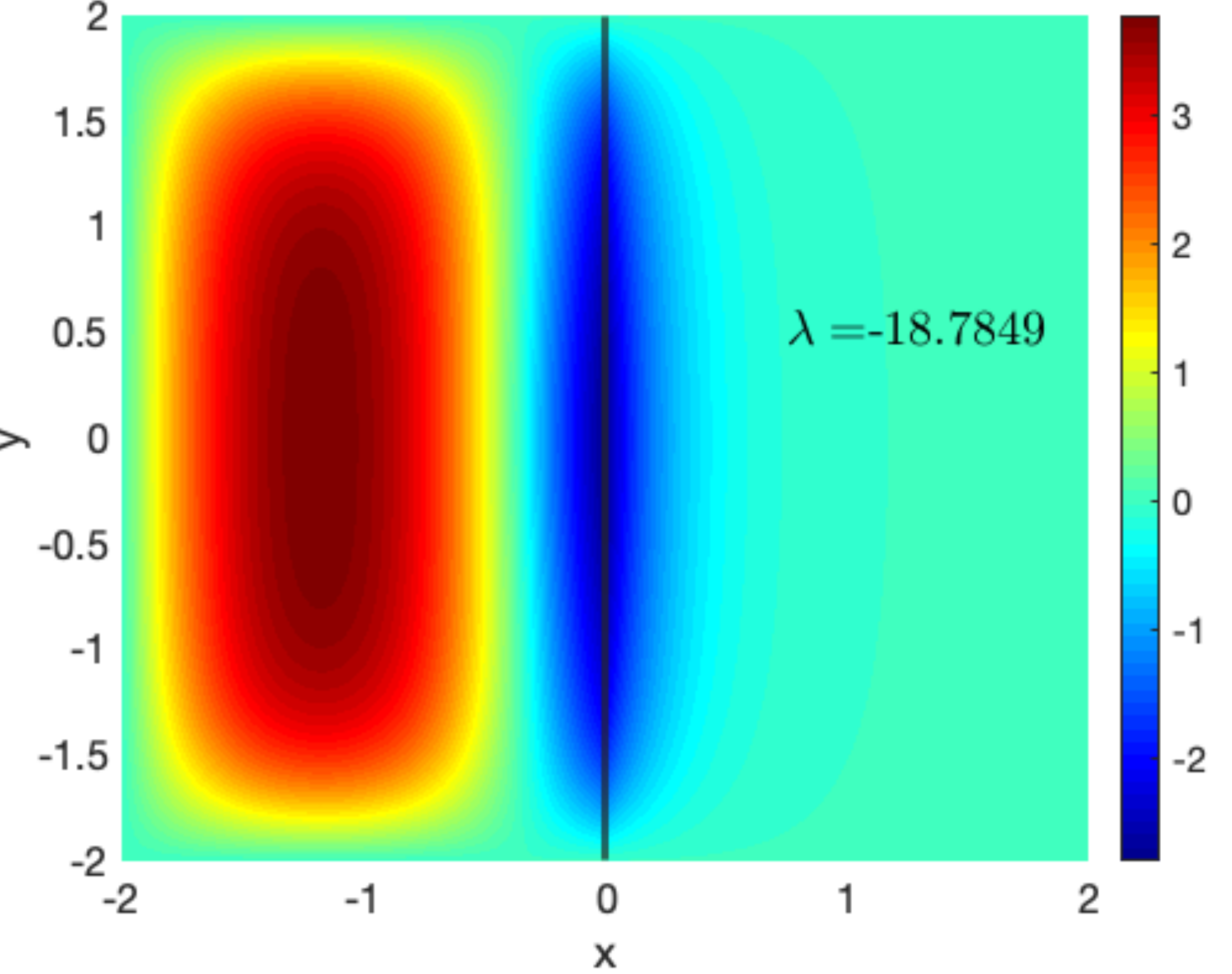}%
  \caption{Solution at first, \( 25 \)th, and \( 50 \)th point of branch associated with an eigenfunction concentrated on \( \Omega_- \).}%
  \label{fig:2d:Omegaminus}
\end{figure}

Similar to the one-dimensional case, we observe a widening of the extrema along the branch with concentration in \( \Omega_- \) in \cref{fig:2d:Omegaminus}, in particular in the \( y \)-direction.

\begin{figure}[hbtp]
  \includegraphics[width=0.31\textwidth, trim=12mm 75mm 22mm 83mm, clip=true]{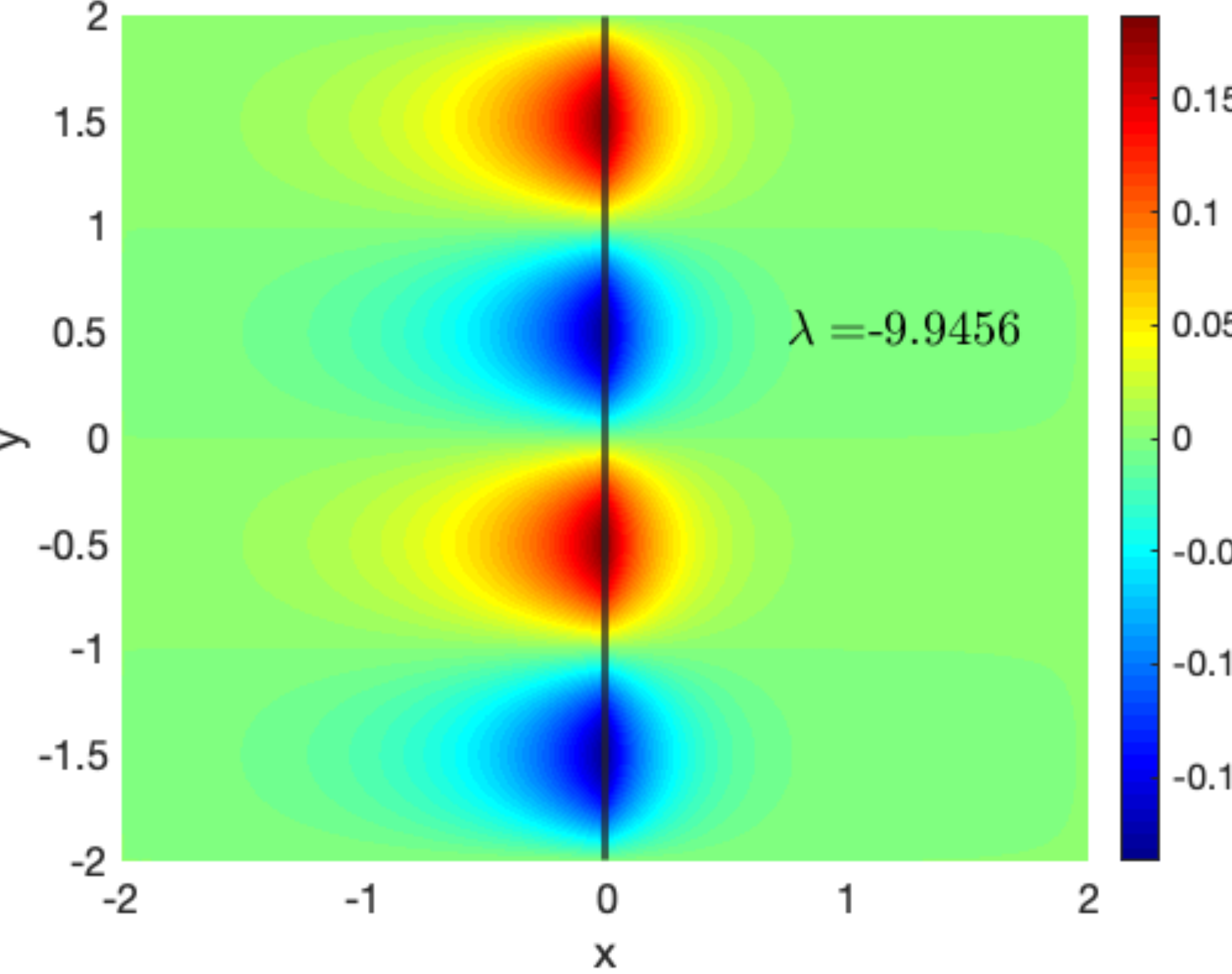}%
  \hspace{2ex}%
  \includegraphics[width=0.31\textwidth, trim=12mm 75mm 22mm 83mm, clip=true]{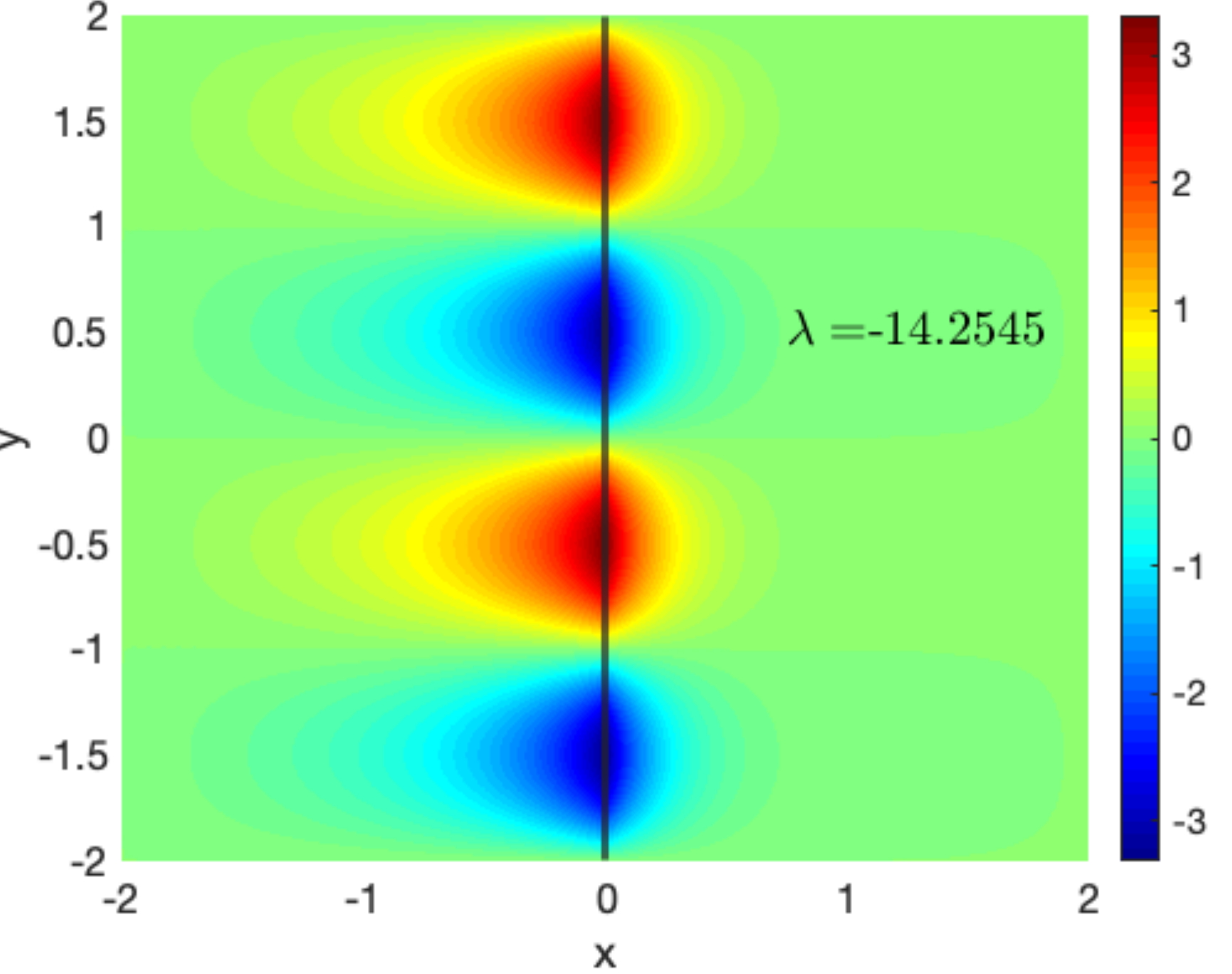}%
  \hspace{2ex}%
  \includegraphics[width=0.31\textwidth, trim=12mm 75mm 22mm 83mm, clip=true]{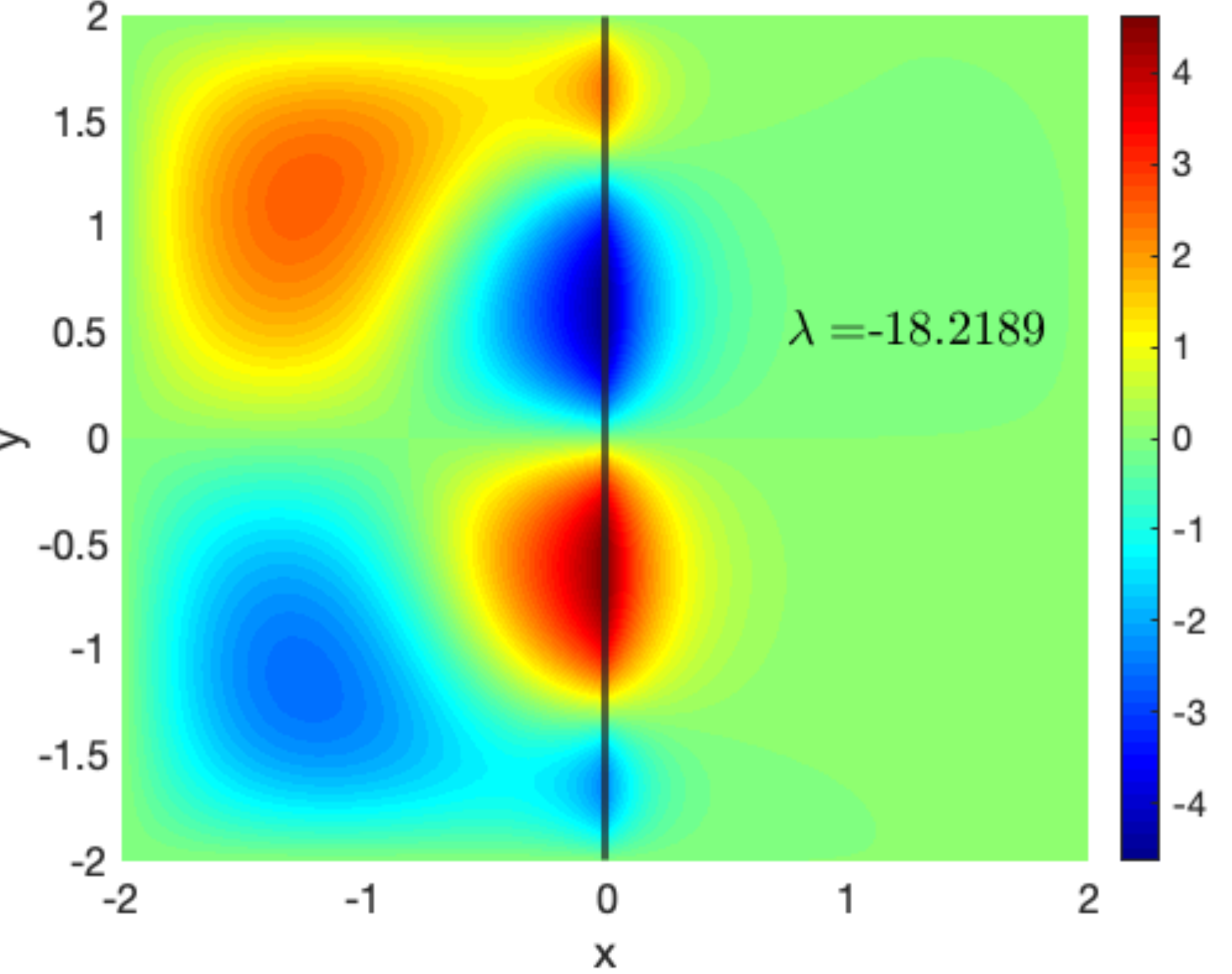}%
  \caption{Solution at first, \( 25 \)th, and \( 50 \)th point of branch associated with an eigenfunction concentrated on \( \Gamma \).}%
  \label{fig:2d:Gamma}
\end{figure}

Furthermore, plateaus in \( \Omega_- \) evolve for negative \( \lambda \) in \cref{fig:2d:Gamma,fig:2d:Omegaplus}.
Due to the second space dimension in the problem, we can have two (or more) different plateaus evolving in \( \Omega_- \).
In \cref{fig:2d:Gamma} for a branch with concentration on \( \Gamma \), we note that the plateaus and the transition between them seems to slightly change the oscillatory pattern on \( \Gamma \) as well.
While the two maxima have almost the same height for the first and \( 25 \)th point (\cref{fig:2d:Gamma} left and middle), one maximum becomes predominant for the \( 50 \)th point on the branch, see \cref{fig:2d:Gamma} right.

\begin{figure}[hbtp]
  \includegraphics[width=0.31\textwidth, trim=12mm 75mm 22mm 83mm, clip=true]{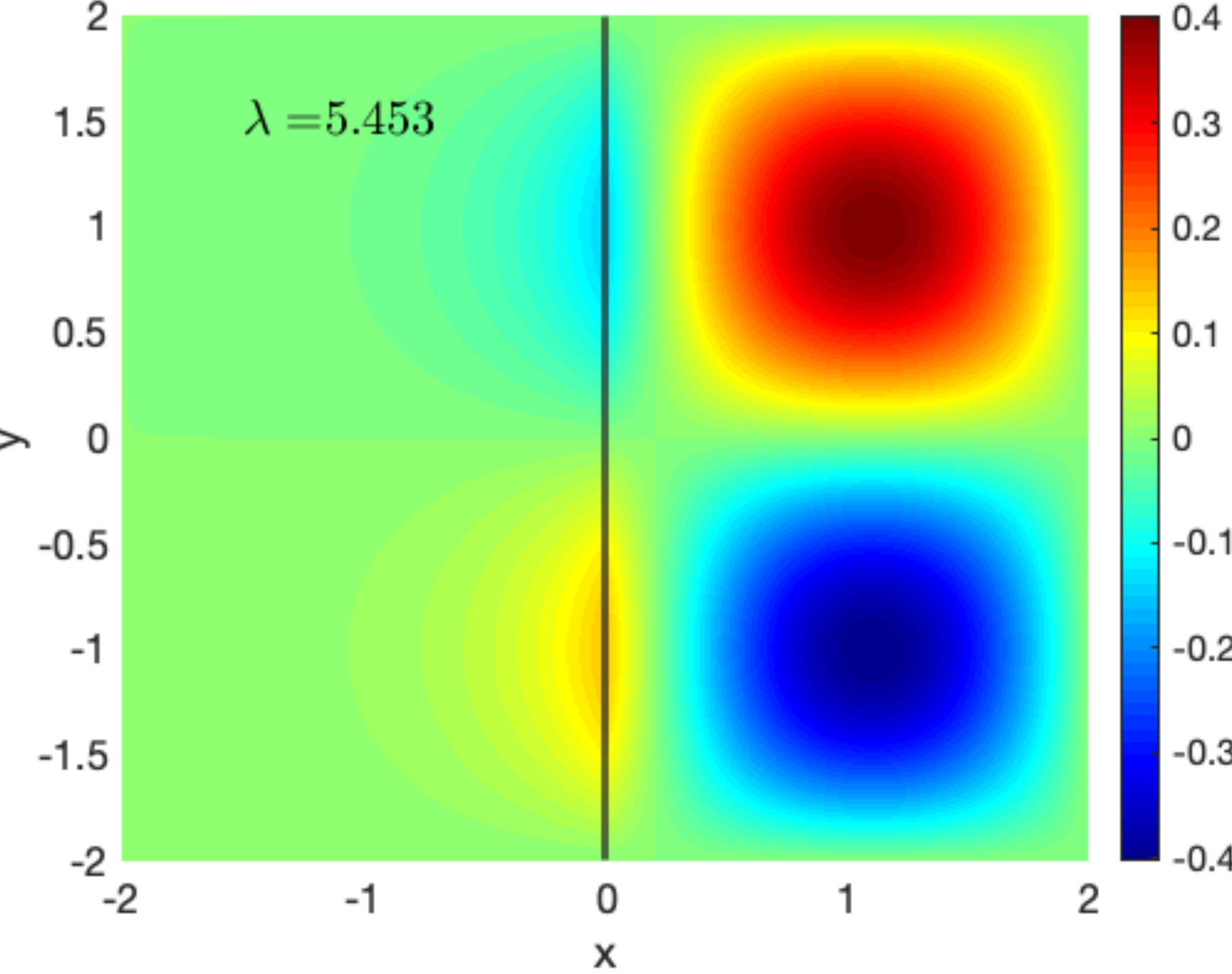}%
  \hspace{2ex}%
  \includegraphics[width=0.31\textwidth, trim=12mm 75mm 22mm 83mm, clip=true]{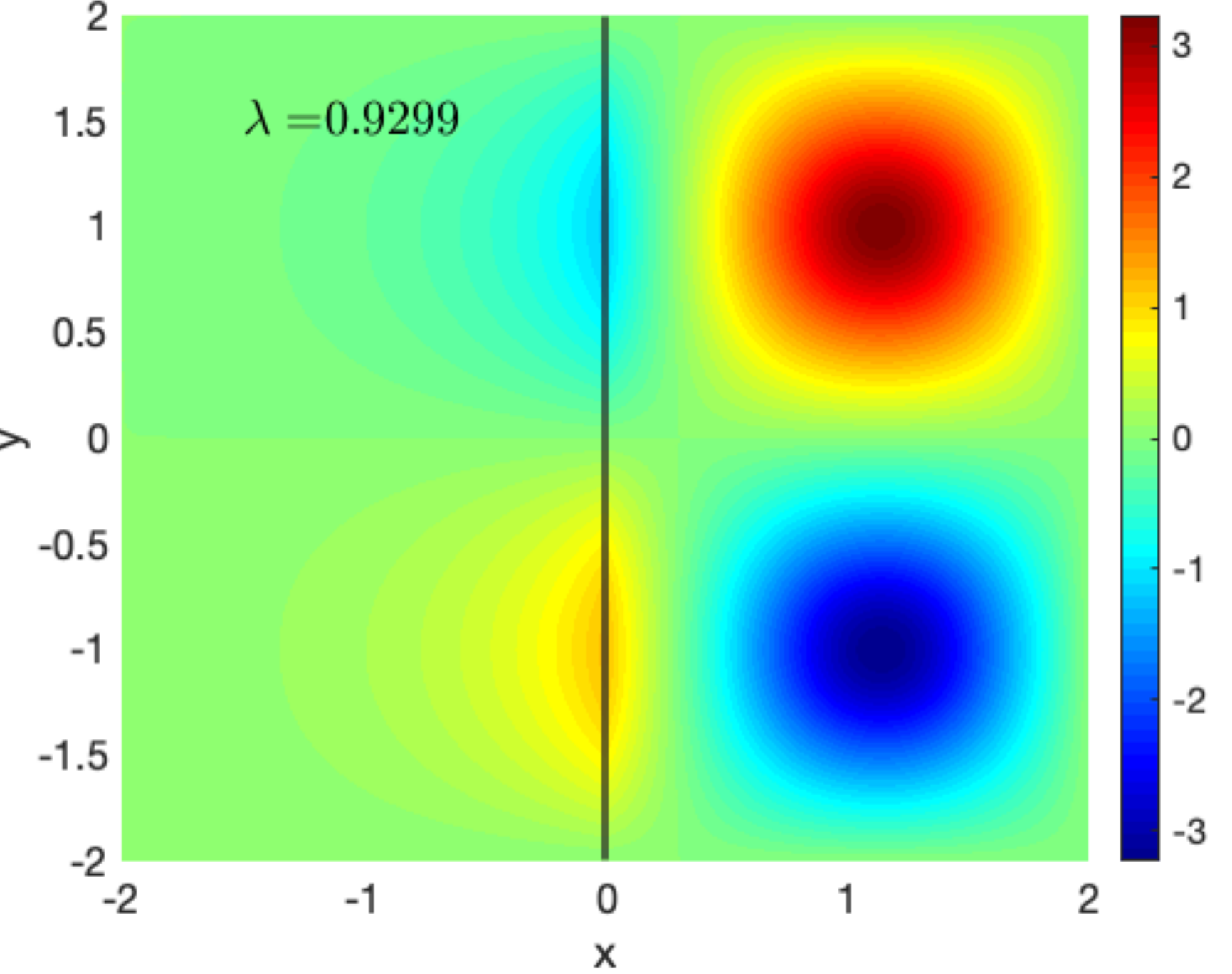}%
  \hspace{2ex}%
  \includegraphics[width=0.31\textwidth, trim=12mm 75mm 22mm 83mm, clip=true]{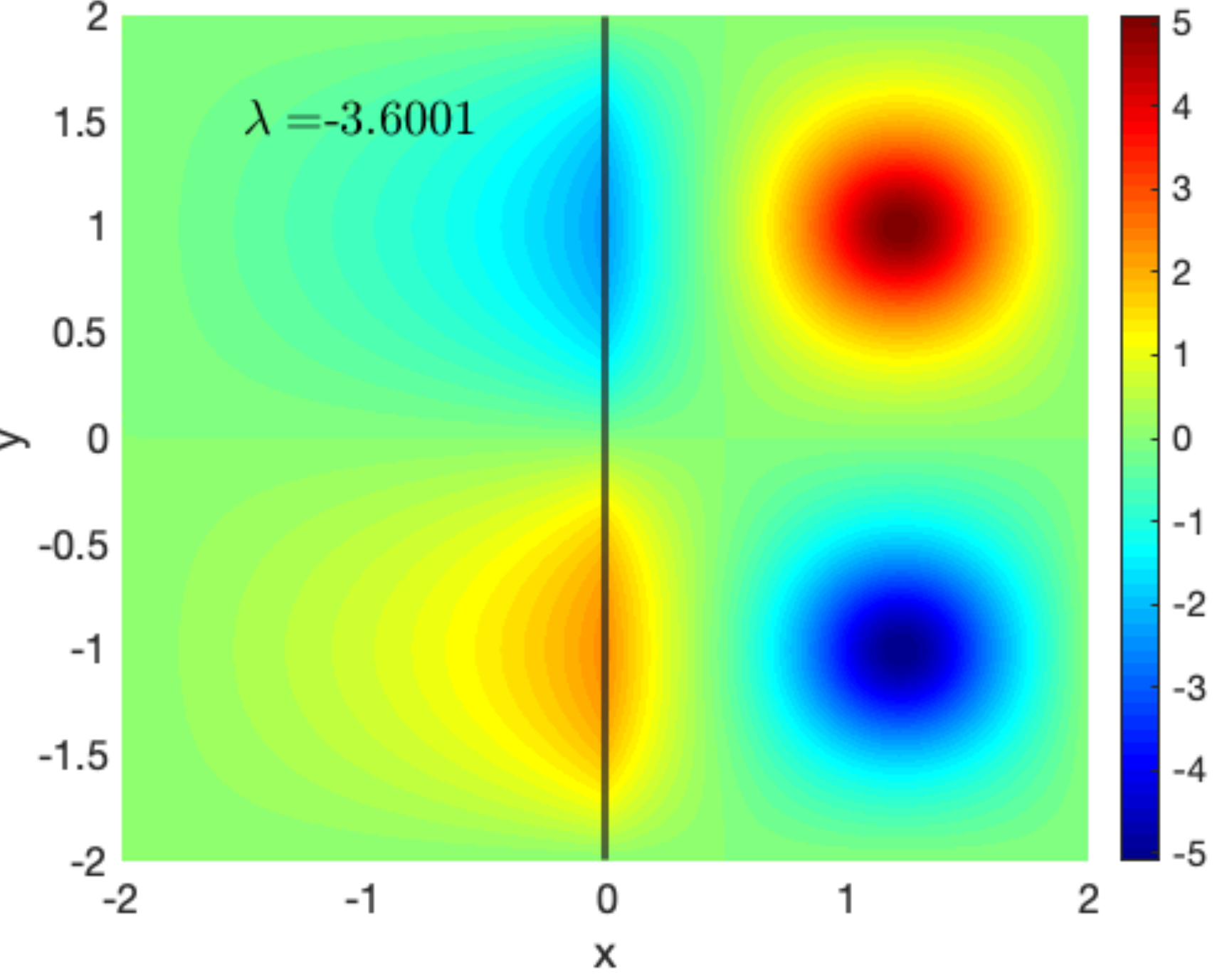}%
  \caption{Solution at first, \( 25 \)th, and \( 50 \)th point of branch associated with an eigenfunction concentrated on \( \Omega_+ \).}%
  \label{fig:2d:Omegaplus}
\end{figure}
Finally, for \cref{fig:2d:Omegaplus} and a branch with concentration on \( \Omega_+ \), we emphasize that the solution in \( \Omega_+ \) evolves like a solution of the standard Laplacian along a branch.
In particular, the extrema become thinner, \emph{i.e.}, more spatially localized, which should be contrasted with solutions concentrated in \( \Omega_- \) in \cref{fig:2d:Omegaminus}.

\section{Linear Theory}\label{sec03}

In this section we want to describe the linear theory for weakly \( \ttT \)-coercive problems.
As pointed out earlier, this theory is essentially well-known~\cite{BBDhiaZwoelf_Timeharmonic, BonCheCia_TCoercivity,CarCheCia_Eigenvalue}.
Since it is short and rather self-contained, we provide the details here, which will moreover allow us to fix the required notation.
Furthermore, we prove some Weyl law asymptotics that have not appeared in the literature yet.
We want to deal with linear problems of the   form
\begin{equation}\label{eq:problem}
  \int_\Omega \sigma(x) \, \nabla u \cdot \nabla v \di{x}
  - \lambda \int_\Omega c(x) \, u \, v \di{x}
  = F(v),
  \qquad \forall v \in \rmH_0^1(\Omega).
\end{equation}
The a priori unknown solution \( u \) is to be found in the Sobolev space \( \rmH_0^1(\Omega) \) and the
coefficient functions \( \sigma \) and \( c \) are assumed to satisfy the conditions~\ref{hyp:sigma_c}
and~\ref{hyp:w_T_cor} from \cref{sec:intro}. To develop a solution theory for the variational problem
\cref{eq:problem} both in \( \rmH_0^1(\Omega) \) and \( \rmL^2(\Omega) \) we assume \( F\in
\rmH^{-1}(\Omega) = \rmH_0^1(\Omega)' \). We may rewrite \cref{eq:problem} as
\begin{equation*}
  a(u,v) - \lambda \skpL{u}{v} = F(v) \qquad \forall v \in \rmH_0^1(\Omega).
\end{equation*}
We introduce the bounded linear operator \( A : \rmH_0^1(\Omega) \to \rmH_0^1(\Omega) \) and the compact
linear operator \( C : \rmH_0^1(\Omega) \to \rmH_0^1(\Omega) \) via  the relations
\begin{equation}\label{eq:def_op_AC}
  \skpH{Au}{v} \coloneqq a(u,v)
  \quad \text{and} \quad
  \skpH{Cu}{v} \coloneqq \skpL{u}{v}
  \qquad \text{for } u, v \in \rmH_0^1(\Omega).
\end{equation}
This is possible by Riesz' Representation Theorem and \( \sigma, c \in  \rmL^\infty(\Omega) \), see \cref{hyp:sigma_c}.
We will also use that the operator \( C \) can be written as \( C= \tilde C \iota \) where \( \tilde{C} : \rmL^2(\Omega) \to \rmH_0^1(\Omega) \) is a bounded linear operator and \( \iota : \rmH_0^1(\Omega)\to \rmL^2(\Omega) \) denotes the embedding operator
which is compact by the Rellich-Kondrachov Theorem.

\begin{prop}\label{prop:Aproperties}
  Under \cref{hyp:sigma_c,hyp:w_T_cor}, there exists \( \las \in \R \)
  such that the bounded linear operator \( \As \coloneqq A + \las C : \rmH_0^1(\Omega) \to \rmH_0^1(\Omega) \)
  is self-adjoint and invertible.
\end{prop}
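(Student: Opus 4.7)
The self-adjointness claim actually holds for every \( \las \in \R \) and is a direct consequence of symmetry, so the content of the proposition lies in locating a single \( \las \) that renders \( \As \) invertible. I would proceed in three steps: (i) verify self-adjointness; (ii) show that \( \As \) is Fredholm of index zero for every \( \las \in \R \); and (iii) produce an \( \las \in \R \) with trivial kernel, which together with (ii) yields invertibility.

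Step (i) is immediate: the bilinear forms \( a(\cdot,\cdot) \) and \( \skpL{\cdot}{\cdot} \) are symmetric, so by \cref{eq:def_op_AC} the operators \( A \) and \( C \) are self-adjoint on \( \plr{\rmH_0^1(\Omega), \skpH{\cdot}{\cdot}} \), and hence so is every \( \As \). For step (ii), I would rewrite \( a(u, \ttT v) = \skpH{\ttT^\ast A u}{v} \), so that the continuous coercive bilinear form in \cref{hyp:w_T_cor} is represented by the bounded operator \( \ttT^\ast A + \ttK \), which is then an isomorphism by Lax--Milgram. Since \( \ttK \) is compact, \( \ttT^\ast A \) is the sum of an isomorphism and a compact operator, hence Fredholm of index zero; composing with the isomorphism \( (\ttT^\ast)^{-1} \) transfers this to \( A \). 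The operator \( C \) factors through the compact embedding \( \iota : \rmH_0^1(\Omega) \to \rmL^2(\Omega) \) and is therefore compact, so \( \As = A + \las C \) remains Fredholm of index zero for every \( \las \in \R \).

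Step (iii) is the crux, and the device I would use is complexification. Extending \( A \) and \( C \) linearly to the complexification of \( \rmH_0^1(\Omega) \) preserves their self-adjointness, and \( \skpH{Cu}{u} = \normL{u}^2 \geq 0 \). For \( \las \in \C \) with \( \mathrm{Im}\,\las \neq 0 \) and any \( u \in \ker(A + \las C) \) one has
\[
  0 = \mathrm{Im}\, \skpH{(A + \las C) u}{u} = \plr{\mathrm{Im}\,\las} \, \normL{u}^2,
\]
so that \( \normL{u} = 0 \) and thus \( u = 0 \), using \( c \geq \alpha > 0 \). Consequently \( A + \las C \) is injective, and hence invertible by the Fredholm index, for every non-real \( \las \). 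The family \( \las \mapsto \ttT^\ast(A + \las C) \) is a holomorphic family of Fredholm operators of index zero on all of \( \C \), invertible at \( \las = \im \); the analytic Fredholm theorem then guarantees that its set of non-invertibility is a discrete subset of \( \C \), whose intersection with \( \R \) is therefore countable, and any \( \las \in \R \) outside this exceptional set gives the desired \( \As \).

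The main obstacle I anticipate is precisely this last step: a purely real variational attempt --- for instance testing \cref{hyp:w_T_cor} against \( v = \ttT u \) and trying to absorb the \( \las \skpL{u}{\ttT u} \) term for large \( |\las| \) --- does not close up, because the sign of \( \skpL{u}{\ttT u} \) is indefinite and cannot be controlled by the \( \normH{\cdot} \)-norm with the correct sign. Passing to the complexification to exploit self-adjointness, and then invoking the analytic Fredholm theorem, sidesteps this issue cleanly and is the route I would take.
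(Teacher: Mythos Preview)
Your proposal is correct and follows essentially the same route as the paper: self-adjointness from symmetry, Fredholm index zero for \( A+\las C \) via \( \ttT^\ast A+\ttK \) and Lax--Milgram, injectivity at non-real \( \las \) by taking imaginary parts after complexification, and then the analytic Fredholm theorem to obtain a discrete exceptional set and hence some admissible real \( \las \). The only cosmetic differences are that the paper applies the analytic Fredholm theorem directly to \( z\mapsto A+zC \) (rather than to \( \ttT^\ast(A+zC) \)) and tests against \( \bar u \) with the bilinear extension instead of using the sesquilinear pairing; neither affects the argument.
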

\begin{proof}
  The self-adjointness follows from \( a(u, v) = a(v, u) \) and
  \[
    \skpH{C u}{v}
    = \skpL{u}{v}
    = \skpL{v}{u}
    = \skpH{Cv}{u}
    = \skpH{u}{C v}
  \]
  for all \( u, v \in \rmH_0^1(\Omega) \). To prove the invertibility of \( \As \) define the family of operators
  \( z \mapsto A_z \coloneqq A + z C \) for \( z \in \C \) on the complex Hilbert space \( \rmH_0^1(\Omega; \C) \). The
  bilinear form associated with \( A_z \) is given by \( (u, v) \mapsto a(u, v) + z \skpL{u}{v} \).
  From \cref{hyp:w_T_cor} and the Lax-Milgram Lemma we infer that \( \ttT^* A + \ttK \) is invertible.
  Moreover, we have the relation
  \[
    A_z = \plr{\ttT^*}^{-1} \clr{\ttT^* A + \ttK} - \plr{\ttT^*}^{-1} \ttK + z C.
  \]
  Here, the first summand is invertible while the other two summands are compact.
  Therefore, \( \setwt{A_z}{z \in \C} \) is a holomorphic family of zero index  Fredholm operators.
  For \( z \in \C \sm \R \), the operator \( A_z \) is injective. Indeed, if \( A_z u = 0 \) then \( \skpH{A_z u}{\overline{u}} = 0 \) and
  \[
    0
    = \Im\plr{\skpH{A_z u}{\overline{u}}}
    = \Im\plr{a(u, \overline{u}) + z \norm{u}_c^2}
    = \Im(z) \norm{u}_c^2.
  \]
  So, we have \( \ker(A_z) = \{0\} \), which implies that  \( A_z \) has a bounded inverse as an injective
  Fredholm operator.
  Using the analytic Fredholm theorem on \( \C \), see~\cite[Theorem C.8]{DyaZwo19}, the set \( \setwt{A_z^{-1}}{z \in \C} \) is a meromorphic family of operators with poles of finite rank.
  Therefore, the operator \( {(A + z C)}^{-1} \) exists for all \( z \in \C \sm \Lambda \) for a discrete set
  \( \Lambda \subset \R \).
  In particular, there exists \( \las \in \R \) such that \( \As \) is an invertible Fredholm operator.
\end{proof}

Regarding \cref{eq:problem} as an equation in \( \rmH_0^1(\Omega) \), we thus obtain the following:
\begin{prop}\label{prop:H1Theory}
  Let \cref{hyp:sigma_c,hyp:w_T_cor} hold as well as \( F\in H^{-1}(\Omega) \).
  Then \cref{eq:problem} is equivalent to
  \begin{equation}\label{eq:probleminH01}
    \As u - (\lambda+\las) C u = \mathcal F,
    \qquad u \in \rmH_0^1(\Omega)
  \end{equation}
  where \( \mathcal F\in \rmH_0^1(\Omega) \) is uniquely determined via
  \( \skpH{\mathcal F}{v}= F(v) \) for all \( v\in \rmH_0^1(\Omega) \).
\end{prop}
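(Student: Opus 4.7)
The plan is to convert the variational identity \eqref{eq:problem} into a Hilbert-space operator equation by systematically applying Riesz' representation theorem in $(\rmH_0^1(\Omega),\skpH{\cdot}{\cdot})$. First I would invoke the Riesz theorem to obtain a unique $\mathcal F \in \rmH_0^1(\Omega)$ with $\skpH{\mathcal F}{v} = F(v)$ for all $v \in \rmH_0^1(\Omega)$; this is legitimate because \cref{hyp:sigma_c} ensures that $\normH{\cdot}$ is equivalent to the standard $\rmH_0^1$-norm, so $(\rmH_0^1(\Omega),\skpH{\cdot}{\cdot})$ is a genuine Hilbert space and $F \in \rmH^{-1}(\Omega)$ corresponds to a unique element of it.

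Next I would rewrite both sides of \eqref{eq:problem} using the defining relations \eqref{eq:def_op_AC} of the operators $A$ and $C$. By construction, $a(u,v) = \skpH{Au}{v}$ and $\skpL{u}{v} = \skpH{Cu}{v}$, so \eqref{eq:problem} becomes the identity $\skpH{Au - \lambda Cu}{v} = \skpH{\mathcal F}{v}$ for all $v \in \rmH_0^1(\Omega)$. Non-degeneracy of $\skpH{\cdot}{\cdot}$ then yields the operator equation $Au - \lambda Cu = \mathcal F$ in $\rmH_0^1(\Omega)$, and conversely any such $u$ recovers \eqref{eq:problem} by testing against $v$.

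Finally I would rewrite $A - \lambda C$ using the shift introduced in \cref{prop:Aproperties}: since $\As = A + \las C$, we have
\[
  A u - \lambda C u = \As u - \las C u - \lambda C u = \As u - (\lambda + \las) C u,
\]
so the previous identity is exactly \eqref{eq:probleminH01}. The equivalence of \eqref{eq:problem} and \eqref{eq:probleminH01} then follows by reading these steps in either direction.

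There is no real obstacle in this proof: the only point worth double-checking is that $\skpH{\cdot}{\cdot}$ is truly equivalent to the standard inner product on $\rmH_0^1(\Omega)$, so that the Riesz identification identifies $\rmH^{-1}(\Omega)$ with $\rmH_0^1(\Omega)$ in the manner claimed. This equivalence is guaranteed by $|\sigma| + |\sigma|^{-1} \in \rmL^\infty(\Omega)$ in \cref{hyp:sigma_c} together with Poincaré's inequality on the bounded domain $\Omega$, so the argument goes through as outlined.
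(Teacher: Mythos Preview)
Your proposal is correct and is exactly the argument the paper has in mind: the paper states this proposition without proof, introducing it with ``Regarding \cref{eq:problem} as an equation in \( \rmH_0^1(\Omega) \), we thus obtain the following'', so the intended justification is precisely the Riesz identification of $F$ with $\mathcal F$ together with the defining relations \eqref{eq:def_op_AC} and the shift $\As = A + \las C$ from \cref{prop:Aproperties}, which is what you have written out.
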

To prove the existence of an orthonormal basis of eigenfunctions for \cref{eq:problem} we now turn towards
an alternative formulation in \( \rmL^2(\Omega) \). From \cref{prop:Aproperties} and \cref{prop:H1Theory} we
obtain that \cref{eq:problem} is equivalent to
\[
  u - (\lambda + \las)  K_c u  = KF,  \qquad u \in \rmL^2(\Omega)
\]
where
\begin{equation}\label{eq:def_K}
  K_c \coloneqq \iota \As^{-1} \tilde C
  \quad \text{and} \quad
  K F \coloneqq \iota \As^{-1} \mathcal F.
\end{equation}
The compact operator  \( K_c : \rmL^2(\Omega) \to \rmL^2(\Omega) \) is self-adjoint with respect to \( \skpL{\cdot}{\cdot} \) because of
\[
  \skpL{K_c u}{v}
  = \skpH{K_c u}{C v}
  = \skpH{\As^{-1} C u}{C v}
  = \skpH{C u}{\As^{-1} C v}
  = \skpL{u}{K_c v}
\]
for all \( u,v \in \rmL^2(\Omega) \). We have thus proved the following.

\begin{prop}\label{prop:LinearTheory}
  Let \cref{hyp:sigma_c,hyp:w_T_cor} hold as well as \( F\in H^{-1}(\Omega) \).
  Then \cref{eq:problem} is equivalent to
  \[
    u - (\lambda + \las)  K_c u  = KF,
    \qquad u \in \rmL^2(\Omega)
  \]
  where \( K_c, K : \rmL^2(\Omega) \to \rmL^2(\Omega) \) are the compact operators given by \cref{eq:def_K}.
  Moreover, \( K_c \) is self-adjoint with respect to \( \skpL{\cdot}{\cdot} \).
  In particular, the linear problem \cref{eq:problem} satisfies the Fredholm Alternative in
  \( \rmL^2(\Omega) \) in the sense of~\cite[Appendix D, Theorem~5]{Evans_PDEs}.
\end{prop}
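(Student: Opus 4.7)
The plan is to derive the $\rmL^2(\Omega)$ reformulation directly from the $\rmH_0^1(\Omega)$ equation furnished by \cref{prop:H1Theory} by applying $\As^{-1}$, then to verify compactness and self-adjointness of $K_c$ by hand, and finally to invoke the classical Fredholm Alternative for compact self-adjoint operators on a Hilbert space.

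First, I would start from the identity $\As u - (\lambda+\las) C u = \mathcal F$ in $\rmH_0^1(\Omega)$ given by \cref{prop:H1Theory}. Since $\As$ is invertible by \cref{prop:Aproperties}, this is equivalent to $u = (\lambda+\las)\As^{-1} C u + \As^{-1} \mathcal F$ in $\rmH_0^1(\Omega)$. Factorising $C = \tilde C \iota$ as noted before the proposition and inserting the definitions from \cref{eq:def_K}, the right-hand side equals $(\lambda+\las) K_c u + K F$ after composing with the embedding $\iota$, giving the asserted $\rmL^2(\Omega)$-identity $u - (\lambda+\las)K_c u = KF$. For the converse direction I would observe that if some $u \in \rmL^2(\Omega)$ satisfies this identity, then $K_c u$ and $KF$ both belong to the range of $\iota$, hence to $\rmH_0^1(\Omega)$ under the canonical identification, so $u \in \rmH_0^1(\Omega)$ and the derivation can be reversed to recover \cref{eq:problem}.

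Next, compactness of $K_c$ and $K$ on $\rmL^2(\Omega)$ is immediate from writing them as compositions of the bounded operators $\tilde C : \rmL^2(\Omega) \to \rmH_0^1(\Omega)$ and $\As^{-1} : \rmH_0^1(\Omega) \to \rmH_0^1(\Omega)$ with the compact embedding $\iota : \rmH_0^1(\Omega) \to \rmL^2(\Omega)$ provided by the Rellich--Kondrachov Theorem. Self-adjointness of $K_c$ with respect to $\skpL{\cdot}{\cdot}$ follows from the chain of identities $\skpL{K_c u}{v} = \skpH{K_c u}{Cv} = \skpH{\As^{-1} C u}{Cv} = \skpH{Cu}{\As^{-1} Cv} = \skpL{u}{K_c v}$, using the defining relation $\skpH{Cu}{w} = \skpL{u}{w}$ and the self-adjointness of $\As$ on $\rmH_0^1(\Omega)$ from \cref{prop:Aproperties}. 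Given compactness and self-adjointness of $K_c$, the Fredholm Alternative as cited from Evans applies to $I - (\lambda+\las) K_c$.

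The only genuinely subtle point I foresee is the equivalence between $\rmH_0^1$- and $\rmL^2$-solutions: one must be careful to argue that an a priori merely $\rmL^2$-integrable solution of the fixed-point equation automatically lifts to $\rmH_0^1(\Omega)$, and this is precisely where the factorisation of both $K_c$ and $K$ through $\iota$ is crucial. Everything else amounts to bookkeeping with the operators introduced in \cref{eq:def_op_AC,eq:def_K}.
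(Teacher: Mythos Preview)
Your proposal is correct and follows essentially the same route as the paper: the paper derives the $\rmL^2$-reformulation from \cref{prop:H1Theory} by inverting $\As$ and using the factorisation $C=\tilde C\iota$, and it proves self-adjointness of $K_c$ via exactly the chain of identities you wrote down. If anything, your treatment of the converse direction (that an $\rmL^2$-solution automatically lies in $\rmH_0^1(\Omega)$ because $K_c$ and $K$ factor through $\iota$) is more explicit than the paper's, which simply asserts the equivalence.
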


The Spectral Theorem for compact self-adjoint operators~\cite[Appendix D, Theorem~7]{Evans_PDEs} provides an orthonormal basis of eigenfunctions as pointed out in~\cite{CarCheCia_Eigenvalue}.
For notational simplicity we use \( \Z^* \coloneqq \Z \sm \{0\} \) as index set for this basis.

\begin{prop}\label{prop:ONB}
  Let \cref{hyp:sigma_c,hyp:w_T_cor} hold.
  Then there is an \( (\rmL^2(\Omega), \skpL{\cdot}{\cdot}) \)-orthonormal basis \( \plr{\phi_j}_{j \in \Z^*} \) of eigenfunctions with associated eigenvalue sequence \( {(\mu_j)}_{j \in \Z^*} \) of the operator \( K_c \) such that
  \[
    \mu_{-1} \leq \mu_{-2} \leq \cdots \leq \mu_{-n}
    \nearrow 0 \swarrow
    \mu_n \leq \cdots \leq \mu_2 \leq \mu_1.
  \]
  In addition, the family \( \plr{\phi_j}_{j \in \Z^*} \) is dense in \( \rmH_0^1(\Omega) \) and
  \begin{equation}\label{eq:orthogonality}
    \skpL{\phi_i}{\phi_j} = \delta_{i, j}
    \quad \text{and} \quad
    a(\phi_i, \phi_j) = \plr{\mu_i^{-1} - \las} \delta_{i, j},
    \qquad \text{for } i, j \in \Z^*.
  \end{equation}
  Moreover, there is \( D > 0 \) such that
  \begin{equation}\label{eq:WeylAsymptotics}
    \abs{\mu_j} \leq D \abs{j}^{-\frac{2}{N}},
    \qquad \text{for all } j \in \Z^*.
  \end{equation}
\end{prop}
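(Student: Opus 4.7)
The plan is to apply the Spectral Theorem for compact self-adjoint operators to $K_c$ on $(\rmL^2(\Omega), \skpL{\cdot}{\cdot})$, then to upgrade the $\rmL^2$-basis properties to the corresponding statements in $\rmH_0^1(\Omega)$ using the operators introduced in \cref{prop:LinearTheory}. The operator $K_c$ is compact (factoring through the Rellich--Kondrachov compact embedding $\iota$) and self-adjoint, while $0$ is excluded from its spectrum because $K_c u = 0$ forces $\tilde C u = 0$, so $\skpL{u}{v} = 0$ for every $v \in \rmH_0^1(\Omega)$; density of $\rmH_0^1(\Omega)$ in $\rmL^2(\Omega)$ together with $c \geq \alpha > 0$ then imply $u = 0$. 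The Spectral Theorem thus yields an $\rmL^2$-orthonormal eigenbasis $(\phi_j)$ with nonzero eigenvalues accumulating only at $0$.

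To produce the two-sided enumeration, I would establish infinite-dimensional positive and negative spectral subspaces for $K_c$. Using functions supported in $\Omega_+$ resp.\ $\Omega_-$ (extended by zero) gives infinite-dimensional subspaces of $\rmH_0^1(\Omega)$ on which $a$ is sign-definite, and taking highly oscillatory elements there makes $\skpH{\As u}{u} = a(u,u) + \las\skpL{u}{u}$ sign-indefinite on infinite-dimensional subspaces for any $\las$. Writing $\iota^*=\tilde C$ for the adjoint of $\iota$ with respect to $\skpH{\cdot}{\cdot}$ and $\skpL{\cdot}{\cdot}$, the identity
\[
  \skpL{K_c u}{u} = \skpH{\As^{-1}\iota^* u}{\iota^* u},
\]
combined with denseness of $\iota^*(\rmL^2(\Omega))$ in $\rmH_0^1(\Omega)$, transfers this indefiniteness to $K_c$, so $K_c$ has infinitely many positive and infinitely many negative eigenvalues. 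Relation \cref{eq:orthogonality} is then immediate: $\skpL{\phi_i}{\phi_j} = \delta_{i,j}$ by construction, while the eigenequation $\As \phi_i = \mu_i^{-1} C \phi_i$ gives $a(\phi_i, \phi_j) = (\mu_i^{-1} - \las)\,\delta_{i,j}$.

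The density of $\spa\{\phi_j\}$ in $\rmH_0^1(\Omega)$ is the most delicate statement, since the $\phi_j$ are $\rmL^2$-orthonormal but not $\rmH_0^1$-orthogonal. Let $W := \overline{\spa\{\phi_j\}}^{\rmH_0^1}$. For $u \in \rmH_0^1(\Omega)$, the $\rmL^2$-expansion $u = \sum_j \skpL{u}{\phi_j}\,\phi_j$ carries over to an $\rmH_0^1$-convergent expansion $C u = \sum_j \skpL{u}{\phi_j}\, C\phi_j$ by continuity of $\tilde C : \rmL^2(\Omega) \to \rmH_0^1(\Omega)$ together with $\tilde C \phi_j = C \phi_j$. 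Applying the bounded isomorphism $\As^{-1}$ yields $\As^{-1} C u = \sum_j \skpL{u}{\phi_j}\, \mu_j\, \phi_j \in W$, so the range of the compact operator $T := \As^{-1} C$ is contained in $W$. As $T^* = C \As^{-1}$ has trivial kernel (from $\ker C = \{0\}$), $T$ has dense range in $\rmH_0^1(\Omega)$; hence $W = \rmH_0^1(\Omega)$.

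Finally, the Weyl asymptotics follow from the symmetric factorization $K_c = \iota \As^{-1}\iota^*$. Horn's singular value inequality $s_{m+n-1}(AB) \leq s_m(A)\, s_n(B)$ applied to $A = \iota \As^{-1}$, $B = \iota^*$ gives
\[
  s_{2n-1}(K_c) \leq s_n(\iota \As^{-1})\, s_n(\iota^*) \leq \norm{\As^{-1}}\, s_n(\iota)^2.
\]
Since $\iota^*\iota = C$ on $\rmH_0^1(\Omega)$, the singular values of the embedding are $s_n(\iota)^2 = \nu_n(C)$, the eigenvalues of the positive definite elliptic problem $-\diver(|\sigma|\nabla\phi) = \nu^{-1} c\,\phi$; the classical Weyl law gives $\nu_n(C)^{-1} \sim n^{2/N}$. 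Self-adjointness of $K_c$ then yields $|\mu_j| = s_{|j|}(K_c) \les |j|^{-2/N}$, proving \cref{eq:WeylAsymptotics}. I expect the main obstacle to be precisely this last step: the naive product bound $s_n(AB) \leq \norm{A}\,s_n(B)$ only gives $|j|^{-1/N}$, and one must exploit the symmetric appearance of both $\iota$ and $\iota^*$ in $K_c$ via Horn's inequality to capture both factors of compact-embedding decay.
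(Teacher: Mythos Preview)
Your argument is correct, and the overall architecture matches the paper's: spectral theorem for $K_c$, exclusion of $0$ from the point spectrum, then the orthogonality relations and Weyl bound. However, two of your sub-arguments differ from the paper's in instructive ways.

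\textbf{Density in $\rmH_0^1(\Omega)$.} You show that the range of $T=\As^{-1}C$ lies in $W=\overline{\spa\{\phi_j\}}^{\rmH_0^1}$ and then invoke $\ker T^*=\{0\}$ to conclude $W=\rmH_0^1(\Omega)$. This is fine, but the paper's argument is shorter: if $u\in\rmH_0^1(\Omega)$ satisfies $\skpH{u}{\phi_j}=0$ for all $j$, then testing the eigenfunction identity $\skpL{\phi_j}{v}=\mu_j\skpH{\As\phi_j}{v}$ with $v=\As^{-1}u$ gives $\skpL{\phi_j}{\As^{-1}u}=\mu_j\skpH{\phi_j}{u}=0$, so $\As^{-1}u=0$ by the $\rmL^2$-basis property, hence $u=0$. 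Both proofs ultimately exploit the same eigenequation, but the paper avoids the detour through the range of $T$.

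\textbf{Weyl bound.} Your route via Horn's inequality $s_{2n-1}(K_c)\le\|\As^{-1}\|\,s_n(\iota)^2$ works (note that one only has $|\mu_j|\le s_{|j|}(K_c)$ rather than equality, since positive and negative eigenvalues interleave in the singular-value ordering, but the inequality is what you need). The paper obtains the same conclusion more directly by a pointwise Rayleigh-quotient comparison: from $|\skpL{K_c\phi}{\phi}|=|\skpH{\As^{-1}\tilde C\phi}{\tilde C\phi}|\le\|\As^{-1}\|\,\normH{\tilde C\phi}^2=\|\As^{-1}\|\,\skpL{\iota\tilde C\phi}{\phi}$, the Courant--Fischer min-max immediately gives $|\mu_j|\le\|\As^{-1}\|\,\zeta_{|j|}(\Omega)^{-1}$ with $\zeta_j$ the $j$-th Dirichlet eigenvalue of $-c^{-1}\diver(|\sigma|\nabla\cdot)$. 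This bypasses Horn's inequality entirely and gives a cleaner constant; your worry that one ``must'' use Horn to avoid losing a factor is therefore unfounded.

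Finally, you take care to prove that $K_c$ has infinitely many eigenvalues of each sign, transferring the sign-indefiniteness of $\As$ on $\rmH_0^1(\Omega)$ to $K_c$ via $\skpL{K_c u}{u}=\skpH{\As^{-1}\iota^*u}{\iota^*u}$ and density of $\iota^*(\rmL^2)$. The paper does not spell this out in the proof of the proposition (it is used later), so your treatment is actually more complete on this point.
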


\begin{proof}
  By \cref{prop:LinearTheory} the compact operator \( K_c \) is self-adjoint on \( (\rmL^2(\Omega), \skpL{\cdot}{\cdot}) \).
  Therefore, the spectral theorem for self-adjoint compact operators~\cite[Appendix D, Theorem~7]{Evans_PDEs} yields an orthonormal basis \( {(\phi_j)}_{j\in\Z^*} \) of \( (\rmL^2(\Omega), \skpL{\cdot}{\cdot}) \) consisting of eigenfunctions of \( K_c \) where the corresponding real eigenvalue sequence \( {(\mu_j)}_{j\in\Z^*} \) converges to zero.
  We claim that \( \mu_j \neq 0 \) holds for all \( j\in\Z^* \).
  Indeed, assuming the contrary, we get  \( K_c \phi_j = 0 \) and thus \( C \phi_j = 0 \), which in turn implies \( \skpL{\phi_j}{v} = \skpH{C \phi_j}{v} = 0 \) for all \( v \in \rmH_0^1(\Omega) \) because of \cref{eq:def_op_AC}.
  But this is impossible given that \( {(\phi_j)}_{j\in\Z^*} \) is an orthonormal basis in \( \rmL^2(\Omega) \) with respect to \( \skpL{\cdot}{\cdot} \) and \( \rmH_0^1(\Omega) \) is dense in \( \rmL^2(\Omega) \).

  \medskip

  We now prove \cref{eq:orthogonality}. Since \( \mu_j \neq 0 \), the relation \( K_c\phi_j = \mu_j
  \phi_j \) implies \( \phi_j \in \rmH_0^1(\Omega) \) and \( C \phi_j = \mu_j \As \phi_j \).
  Using \cref{eq:def_op_AC} we get for all \( v \in \rmH_0^1(\Omega) \)
  \begin{equation}\label{eq:skp_phi}
    \skpL{\phi_j}{v}
    = \skpH{C \phi_j}{v}
    = \mu_j \skpH{ \As \phi_j}{v}
    = \mu_j \plr{a(\phi_j, v) + \las \skpL{\phi_j}{v}}.
  \end{equation}
  In particular, choosing \( v = \phi_i \) in \cref{eq:skp_phi}, we obtain \( a(\phi_j,\phi_i) =
  (\mu_j^{-1} - \las) \delta_{i,j} \).

  \medskip

  To show that \( \plr{\phi_j}_{j \in \Z^*} \) is dense in \( \rmH_0^1(\Omega) \), consider any \( u \in \rmH_0^1(\Omega) \) such that \( \skpH{u}{\phi_j} = 0 \) for
  all \( j \in \Z^* \). We want to show \( u=0 \). Using \( v = \As^{-1} u \) in \cref{eq:skp_phi}, we get
  \[
    \skpL{\phi_j}{\As^{-1} u}
    = \mu_j \skpH{ \As \phi_j}{\As^{-1} u}
    = \mu_j \skpH{\phi_j}{u} = 0
    \quad \text{for all } j \in \Z^*.
  \]
  However, \( \plr{\phi_j}_{j \in \Z^*} \) is an orthonormal basis of \( (\rmL^2(\Omega), \skpL{\cdot}{\cdot})
  \), which implies  \( \As^{-1} u = 0 \) and thus \( u = 0 \).
  Therefore, the family \( {(\phi_j)}_{j\in\Z^*} \) is dense in \( \rmH_0^1(\Omega) \).

  \medskip

  We finally prove the Weyl law asymptotics \cref{eq:WeylAsymptotics}. This is based on the
  Courant-Fischer min-max characterization for the eigenvalues \( \mu_j \) in terms of \( K_c \). In fact, the
  formula
  \begin{equation}\label{eq:RayleighQuotient}
    \frac{\skpL{K_c \phi}{\phi}}{\norm{\phi}_c^2}
    = \frac{ \sum_{j\in\Z^*} \mu_j c_j^2 }{\sum_{j\in\Z^*} c_j^2}
    \quad \text{for } \phi = \sum_{j\in\Z^*} c_j \phi_j
    \text{ and } {(c_j)}_{j\in\Z^*} \in \las^2(\Z^*)
  \end{equation}
  implies  for \( j\in\N \)
  \begin{equation}\label{eq:minmax_mu}
    \mu_j = \max_{\substack{X \subset \rmL^2(\Omega) \\ \dim(X) = j}} \min_{\phi \in X \setminus \{0\}} \frac{\skpL{K_c \phi}{\phi}}{\norm{\phi}_c^2}
    \quad \text{and} \quad
    \mu_{-j} = \min_{\substack{X \subset \rmL^2(\Omega) \\ \dim(X) = j}} \max_{\phi \in X \setminus \{0\}} \frac{\skpL{K_c \phi}{\phi}}{\norm{\phi}_c^2}.
  \end{equation}
  To prove the upper bound for \( |\mu_j|,j\in\Z^* \) we use
  \[
    \abs{\skpL{K_c \phi}{\phi}}
    = \abs{ \skpH{\As^{-1} \tilde C \phi}{\tilde C \phi} }
    \leq \opnorm{\As^{-1}} \, \normH{\tilde C \phi}^2
    = \opnorm{\As^{-1}} \skpL{\iota \tilde C \phi}{\phi}
  \]
  for all \( \phi \in \rmL^2(\Omega) \). Here, the equalities hold due
  to \cref{eq:def_op_AC} and \cref{eq:def_K}.
  So the definition of \( \tilde C \) implies that \( |\mu_j|,|\mu_{-j}| \)   is bounded from above
  by \( \opnorm{\As^{-1}} {\zeta_j(\Omega)}^{-1} \) where \( \zeta_j(\Omega) \) is the \( j \)-th smallest
  Dirichlet eigenvalue of \( \phi\mapsto -c^{-1} \diver\plr{ |\sigma| \nabla \phi} \) on \( \Omega \) and
  \( j\in\N \).
  Since \( c \) and \( |\sigma| \) are uniformly positive and bounded, \( \zeta_j(\Omega) \) is bounded from below by some
  \( j \)-independent multiple of the \( |j| \)-th smallest eigenvalue of the Dirichlet-Laplacian over \( \Omega \). We
  thus conclude that there is \( D > 0 \) such that
  \[
    \abs{\mu_j}
    \leq \opnorm{\As^{-1}} {\zeta_j(\Omega)}^{-1}
    \leq D \, |j|^{-\frac{2}{N}}.
    \qquad (j\in\Z^*)
  \]
  This finishes the proof of \cref{eq:WeylAsymptotics}.
\end{proof}

To facilitate the application of this result we add a corollary.

\begin{cor}\label{cor:ONB}
  Let \cref{hyp:sigma_c,hyp:w_T_cor} hold.
  Then, there is a sequence \( \plr{\lambda_j, \phi_j}_{j \in \Z} \) consisting of all eigenpairs of the
  differential operator \( \phi \mapsto -{c(x)}^{-1} \diver(\sigma(x) \nabla \phi) \) on \( \rmH_0^1(\Omega) \) that satisfies
  \[
    -\infty \swarrow \cdots \leq \lambda_{-j} \leq \cdots \leq \lambda_{-1} \leq \lambda_0 \leq \lambda_1 \leq
    \cdots \leq \lambda_j \leq \cdots \nearrow +\infty
  \]
  and \( \plr{\phi_j}_{j \in \Z} \) is an orthonormal basis of \( (\rmL^2(\Omega), \skpL{\cdot}{\cdot}) \)
  which is dense in \( \rmH_0^1(\Omega) \). Moreover, there are constants \( m,M> 0 \) such that
    \[
      1+|\lambda_j| \geq m(1+|j|)^{\frac{N}{2}} \;(j\in\Z)
      \quad\text{and}\quad
      \Card\setst{j \in \Z}{-\Lambda \leq \lambda_j \leq \Lambda}
      \leq M \, \Lambda^{\frac{N}{2}},
      \;(\Lambda \geq 1).
    \]
\end{cor}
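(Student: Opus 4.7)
The plan is to derive the corollary from Proposition \ref{prop:ONB} by transferring its spectral data on $K_c$ to the differential operator $\phi \mapsto -c(x)^{-1}\diver(\sigma(x) \nabla \phi)$, and then re-enumerating the resulting sequence so that the index set is $\Z$ and the ordering is non-decreasing.

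First, starting from the eigen-system $(\phi_k, \mu_k)_{k\in \Z^*}$ of $K_c$ given by Proposition \ref{prop:ONB} and using \eqref{eq:def_K} together with \eqref{eq:def_op_AC}, the relation $K_c \phi_k = \mu_k \phi_k$ lifts to $\As \phi_k = \mu_k^{-1} C \phi_k$, i.e.\ $A \phi_k = (\mu_k^{-1} - \ell) C \phi_k$. Setting $\tilde\lambda_k := \mu_k^{-1} - \ell$ and testing against $v \in \rmH_0^1(\Omega)$, this is precisely the weak form
\begin{equation*}
  a(\phi_k, v) = \tilde\lambda_k \skpL{\phi_k}{v}, \qquad v \in \rmH_0^1(\Omega),
\end{equation*}
of the eigen-equation $-c(x)^{-1}\diver(\sigma(x)\nabla \phi_k) = \tilde\lambda_k \phi_k$, so each $\phi_k$ is an eigenfunction. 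For the converse, given any eigenpair $(\lambda, \phi) \in \R \times (\rmH_0^1(\Omega) \setminus \{0\})$ one has $\As \phi = (\lambda + \ell) C \phi$; invertibility of $\As$ (Proposition \ref{prop:Aproperties}) excludes $\lambda = -\ell$, leaving $K_c \phi = (\lambda + \ell)^{-1} \phi$, so that $\lambda \in \{\tilde\lambda_k : k \in \Z^*\}$ and $\phi \in \spa\{\phi_k : \mu_k = (\lambda+\ell)^{-1}\}$. Thus the full eigenset of the differential operator is exactly $\{\tilde\lambda_k : k \in \Z^*\}$, captured with the correct multiplicities.

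Next I would re-enumerate $(\tilde\lambda_k, \phi_k)_{k \in \Z^*}$ in non-decreasing order as $(\lambda_j, \phi_j)_{j \in \Z}$. By Proposition \ref{prop:ONB}, $\mu_k \searrow 0^+$ as $k \to +\infty$ and $\mu_k \nearrow 0^-$ as $k \to -\infty$, so the values $\tilde\lambda_k = \mu_k^{-1} - \ell$ diverge to $+\infty$ and $-\infty$ respectively. Within each sign class the ordering of $\mu_k^{-1}$ is dictated by that of $\mu_k$, so only finitely many indices must be shifted to obtain a globally monotone indexing by $\Z$. The reshuffled family $(\phi_j)_{j \in \Z}$ remains an orthonormal basis of $(\rmL^2(\Omega), \skpL{\cdot}{\cdot})$ and is dense in $\rmH_0^1(\Omega)$ by inheritance from Proposition \ref{prop:ONB}.

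Finally, for the Weyl-type bounds the input is $|\mu_k| \leq D |k|^{-2/N}$, which gives $|\tilde\lambda_k| + |\ell| \geq |\mu_k|^{-1} \geq D^{-1} |k|^{2/N}$. Since the bijection $j \mapsto k(j)$ between $\Z$ and $\Z^*$ produced by the re-enumeration is order-preserving up to a bounded index shift, this estimate transfers directly to $1 + |\lambda_j| \gtrsim (1+|j|)^{2/N}$ for all $j \in \Z$. Inverting this lower bound gives the counting-function estimate $\Card\{j \in \Z : -\Lambda \leq \lambda_j \leq \Lambda\} \lesssim \Lambda^{N/2}$ for $\Lambda \geq 1$. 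The main task is entirely bookkeeping: one must verify that the re-indexing from $\Z^*$ to $\Z$ does not distort the asymptotic comparison between $|\lambda_j|$ and $|j|$, which is immediate from the uniform boundedness of the index shift. There is no analytical obstacle beyond what Proposition \ref{prop:ONB} supplies.
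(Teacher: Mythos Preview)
Your argument is correct and follows essentially the same route as the paper: relabel the eigenpairs of $K_c$ via $\lambda = \mu^{-1}-\ell$, reindex monotonically over $\Z$, and read off the growth and counting estimates from the bound $|\mu_k|\le D|k|^{-2/N}$ of Proposition~\ref{prop:ONB}. You are in fact more explicit than the paper on two points it leaves implicit: the converse direction (every eigenpair of the differential operator arises from one of $K_c$, using injectivity of $\As$ to rule out $\lambda=-\ell$), and the fact that the reindexing $\Z\to\Z^*$ is a bounded shift so that the asymptotics transfer; both additions are welcome. Note also that your computation yields the exponent $2/N$ in $1+|\lambda_j|\gtrsim(1+|j|)^{2/N}$, which is the correct one and matches the paper's later reference to this bound, whereas the $N/2$ appearing in the displayed statement is a typographical slip.
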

\begin{proof}
  We choose the eigenpairs \( \plr{\lambda_j,\phi_j}_{j \in \Z} \) such that
  \[
    \setwt{\plr{\lambda_j,\phi_j}}{j \in \Z} =
    \setwt{\plr{\mu_j^{-1} - \las,\phi_j}}{j \in \Z^*}
  \]
  where the map \( j \mapsto \plr{\lambda_j,\phi_j} \) is injective and \( j\mapsto \lambda_j \) is nondecreasing.
  Then, using the estimates for \( \mu_j \) from \cref{eq:WeylAsymptotics}, we find 
  $1+|\lambda_j| \geq m(1+|j|)^{\frac{N}{2}}$ for some constant $m>0$. Moreover,  $|\lambda_j|\leq
  \Lambda$ and $\Lambda \geq 1$ implies
  \[
    \Lambda
    \geq \abs{\mu_j^{-1} - \ell}
    \geq \abs{\mu_j}^{-1} - \abs{\ell}
    \geq D^{-1} \abs{j}^{\frac{2}{N}} - \abs{\ell}.
  \]
  Hence,
  \[
    \Card\setst{j \in \Z}{-\Lambda \leq \lambda_j \leq \Lambda}
    \leq \Card\setst{j \in \Z}{\abs{j}^{\frac{2}{N}} \leq D (\Lambda + \abs{\ell})}
    \leq M \, \Lambda^{\frac{N}{2}}
  \]
  for some constant \( M > 0 \) as claimed. 
\end{proof}

\begin{rem}\label{rem:T1_assumption}
  \hfill
  \begin{enumerate}[label=(\alph*)] 
    \item In \cref{cor:ONB}, the ordering of the eigenvalues \( \plr{\lambda_j}_{j \in \Z} \) is fixed up to translations of the indices and permutations within eigenspaces.
          The former  ambiguity can be removed by specifying \( \lambda_0 \).
          A natural way to do this is to require that \( \lambda_0 \) has the smallest absolute value.
          As mentioned earlier, we do not choose such an ordering in our 1D model example from \cref{cor:bif1D} because it is in general not consistent with the \( j \)-dependent nodal patterns.

    \item A reasonable min-max formula for the eigenvalues  \( \plr{\lambda_j}_{j \in \Z} \) in terms of the bilinear form \( a \) does not seem to exist despite the simple formula \( a(\phi,\phi)= \sum_{j\in\Z}\lambda_j c_j^2 \) for \( \phi=\sum_{j\in\Z} c_j\phi_j \).
          In fact, the bilinear form \( (u,v) \mapsto a(u,v) \) has a totally isotropic subspace of infinite dimension, for example
          \[
            \spa\setwt{\abs{\lambda_j}^{\frac{1}{2}} \phi_{-j} + \abs{\lambda_{-j}}^{\frac{1}{2}} \phi_j}{j
            \in \N_0,\ \lambda_{-j} \lambda_j \leq 0} \subset \setst{u \in \rmH_0^1(\Omega)}{a(u, u)  = 0}.
          \]

    \item In~\cite[Section~1]{CarCheCia_Eigenvalue}, the authors  provide some explicit one-dimensional example showing that all statements in this section may be false when \( c \in \rmL^\infty(\Omega) \) is sign-changing.
          In fact, they showed that for some tailor-made \( \sigma \) as in \cref{hyp:sigma_c} and \( c \coloneqq \sigma \) the operator \( u \mapsto -{c(x)}^{-1} \diver(\sigma(x) \, \nabla u) \) may have the whole complex plane as
          spectrum.
          In particular, the spectral theory of (compact) self-adjoint operators does not apply in this context.\label{test}

    \item We mention some similarities and differences concerning the spectral properties of the differential operator \( u\mapsto -{c(x)}^{-1}\diver(\sigma(x)\nabla u) \) for
          \begin{center}
            (I)\; sign-changing \( \sigma \) and \( c=1 \), \qquad
            (II)\; \( \sigma=1 \) and sign-changing \( c \).
          \end{center}
          In the case (I) \cref{prop:ONB} and \cref{cor:ONB} show that the sequence of eigenvalues is unbounded from above and from below.
          This is also true for (II), see the Propositions 1.10 and 1.11 in~\cite{deFig_Pos}.
          On the other hand, there are subtle differences.
          As we will see in \cref{lem:sort_eigs_1d}, in our one-dimensional model example for case (I) there is precisely one positive eigenfunction \( \phi_0 \) with associated eigenvalue \( \lambda_0 \), and that one  might not have the smallest  absolute value among all eigenvalues.
          In fact, \( \abs{\lambda_0} \) can be much larger than \( \abs{\lambda_{-1}} \), see \cref{rem:ordering}.
          In particular, there is little hope to prove the existence of positive eigenvalues via some straightforward application of the Krein-Rutman theorem.
          This is different for the case (II) where Manes-Micheletti~\cite{ManMich_UnEstensione} (see also~\cite[Theorem~1.13]{deFig_Pos}) proved the existence of one positive and one negative principal eigenvalue, \emph{i.e.}, algebraically simple eigenvalues of the smallest absolute value among the positive and negative eigenvalues, respectively, coming with positive eigenfunctions.
          So here the two models exhibit different phenomena.
  \end{enumerate}
\end{rem}

\section{Proof of \texorpdfstring{\cref{thm:bif}}{\ref{thm:bif}} and \texorpdfstring{\cref{cor:bif1D}}{\ref{cor:bif1D}}}\label{sec04}

We now prove the theoretical bifurcation results  with the aid of   known bifurcation results for equations
of the form \( F(u, \lambda) = 0 \)  where \( F \in \scrC^2(H \times \R, H) \) for some   Hilbert space  \( H \). We will consider bifurcation from the trivial solution branch, so \( F \) is supposed  to satisfy \( F(0, \lambda) = 0 \) for all \( \lambda \in \R \). To this end we proceed as follows:
First, we present two abstract bifurcation theorems that allow to detect local respectively global
bifurcation from the trivial solution.  Next, we show how to apply these results to prove \cref{thm:bif}, which is
straightforward. Finally, we sharpen our results in the one-dimensional case of \cref{eq:NLeq1D} by proving
\cref{cor:bif1D}.

\subsection{Known abstract Bifurcation Theorems}

\subsubsection{Local Variational Bifurcation}

We first present a simplified version of~\cite[Theorem~2.1(i)]{PejWat_Bifurcation} (see also~\cite[Corollary~3]{FitzPejRech}) that allows to detect local bifurcation for equations of the form \( \nabla
\Psi_\lambda(u)=0 \) for some \( \Psi \in \scrC^2(H \times \R, \R) \) and \( \Psi_\lambda \coloneqq  \Psi(\cdot,\lambda) \).
We denote the Fr\'{e}chet derivative of \( \Psi_\lambda \) at \( u \) by \( \Psi_\lambda'(u):H\to H,\, \phi\mapsto
\skp{\nabla \Psi_\lambda(u)}{\phi} \).

\begin{thm}\label{thm:VariationalBifurcation}
  Suppose \( H \) is a separable real Hilbert space and \( \Psi \in \scrC^2(H \times \R, \R) \) satisfies
  \( \nabla \Psi_\lambda(u) = Lu - \lambda K u- \mathcal{G}(u) \)  where
  \begin{itemize}
    \item[(i)] \( L:H\to H \) is a linear invertible self-adjoint Fredholm operator,
    \item[(ii)] \( K:H\to H \) is a linear compact and positive self-adjoint operator,
    \item[(iii)] \( \mathcal{G}:H\to H \) satisfies \( \mathcal{G}(0)=0 \) and \( \mathcal{G}'(0)=0 \).
  \end{itemize}
  Then each \( \lambda_\star \in \R \) such that \( \ker(\Psi_{\lambda_\star}''(0)) \neq \{0\} \) is a
  bifurcation point for \( \nabla \Psi_\lambda(u)=0 \).
\end{thm}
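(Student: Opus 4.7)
The plan is to follow the spectral flow approach underlying the cited papers \cite{FitzPejRech, PejWat_Bifurcation}. Setting \( A(\lambda) \coloneqq \Psi_\lambda''(0) = L - \lambda K \), observe that this is a (norm-)continuous, in fact affine, path of self-adjoint Fredholm operators: (i) ensures that \( L \) is such, and (ii) shows that \( \lambda K \) is a compact self-adjoint perturbation, which preserves both self-adjointness and the Fredholm index zero. The hypothesis \( \ker A(\lambda_\star) \neq \{0\} \) identifies \( \lambda_\star \) as a crossing of this path, and the goal is to show that this crossing forces bifurcation.

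The key observation is that the positivity of \( K \) makes the crossing at \( \lambda_\star \) regular. Indeed, the crossing form induced by \( \partial_\lambda A(\lambda_\star) = -K \) is negative definite on the finite-dimensional subspace \( \ker A(\lambda_\star) \), so the spectral flow of \( A \) across \( \lambda_\star \) equals \( -\dim\ker A(\lambda_\star) \neq 0 \). Combined with the variational structure \( \nabla\Psi_\lambda(u) = A(\lambda)u - \mathcal{G}(u) \) with \( \mathcal{G}(0) = 0 \) and \( \mathcal{G}'(0) = 0 \), this nontriviality of the spectral flow is exactly the scenario of \cite[Theorem~2.1(i)]{PejWat_Bifurcation}, yielding bifurcation at \( \lambda_\star \).

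More concretely, one can also proceed via Lyapunov-Schmidt. Set \( V_0 \coloneqq \ker A(\lambda_\star) \), which is finite-dimensional by the Fredholm property, and decompose orthogonally \( H = V_0 \oplus V_0^\perp \). The restriction of \( A(\lambda_\star) \) to \( V_0^\perp \) is an isomorphism onto \( V_0^\perp \), so by the Implicit Function Theorem the \( V_0^\perp \)-component of \( \nabla\Psi_\lambda(v+w) = 0 \) is uniquely solvable for \( w = w(v,\lambda) \) of class \( \scrC^1 \) in a neighborhood of \( (0,\lambda_\star) \), with \( w(0,\lambda) \equiv 0 \). The reduced function \( \Phi(v,\lambda) \coloneqq \Psi_\lambda(v + w(v,\lambda)) \) is \( \scrC^2 \) on a neighborhood of \( (0,\lambda_\star) \) in \( V_0 \times \R \); its nontrivial critical points in the \( v \)-variable lift to nontrivial solutions of \( \nabla\Psi_\lambda = 0 \), and the positivity of \( K \) makes the Morse index of \( \Phi''(0,\lambda) \) jump by exactly \( \dim V_0 \) as \( \lambda \) crosses \( \lambda_\star \). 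Finite-dimensional variational bifurcation of Rabinowitz' type (\emph{cf.}~\cite[Theorem~11.4]{Rab_Minimax}) then yields the desired bifurcating sequences.

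The main obstacle, as I see it, is avoiding the difficulties caused by the strong indefiniteness of \( \Psi_\lambda \), which renders both the positive and the negative Morse indices of \( A(\lambda) \) infinite. Hypothesis (ii), the positivity and compactness of \( K \), is precisely what bypasses these obstructions: in the Lyapunov-Schmidt picture it forces the Morse index of the finite-dimensional reduction to jump by exactly \( \dim V_0 \), and in the spectral flow picture it makes the crossing form definite, so that the abstract variational bifurcation machinery applies in a clean way.
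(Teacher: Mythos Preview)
Your proposal is correct and follows essentially the same route as the paper: both arguments identify \( A(\lambda)=L-\lambda K \) as a continuous path of self-adjoint Fredholm operators, use the positivity of \( K \) to deduce that the spectral flow across \( \lambda_\star \) equals \( \pm\dim\ker A(\lambda_\star)\neq 0 \), and then invoke \cite[Theorem~2.1(i)]{PejWat_Bifurcation}. The paper additionally makes explicit that the set of singular parameters is discrete (so one can choose a small interval with invertible endpoints), which you obtain implicitly from the regularity of the crossing; your supplementary Lyapunov--Schmidt sketch is not in the paper but is a reasonable alternative justification.
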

\begin{proof}
  Our assumptions (i), (ii), (iii) imply that \( {(\Psi_\lambda)}_{\lambda\in\R} \) is a continuous family of
  \( \scrC^2 \)-functionals in the sense of~\cite[p.537]{PejWat_Bifurcation}.
  If \( \lambda_\star \in \R \) is as required, then Theorem~2.1(i) in~\cite{PejWat_Bifurcation} proves that
  the interval \( [\lambda_\star-\eps,\lambda_\star+\eps] \) contains a bifurcation point provided that the
  Hessians \( \Psi_{\lambda_\star\pm\eps}''(0) \) are invertible and the spectral flow of this family over the
  interval \( I \coloneqq [\lambda_\star-\eps,\lambda_\star+\eps] \) is non-zero.
  In fact, since \( L \) is invertible and \( K \) is compact, the linear operator \( \Psi_{\lambda}''(0) = L-\lambda
  K \) has a nontrivial kernel only for \( \lambda \) belonging to a discrete subset of \( \R \). So we may
  choose \( \eps > 0 \) so small that \( \Psi_{\lambda_\star+\eps}''(0),\Psi_{\lambda_\star-\eps}''(0) \) are
  invertible and \( \lambda_\star \) is the only candidate for bifurcation in \( I \) by the Implicit Function Theorem.
  Using then the positivity of \( K \) we get from Remark~(3) in~\cite{PejWat_Bifurcation} that the spectral
  flow over \( I \) is the dimension of \( \ker(\Psi_{\lambda_\star}''(0)) \), which is positive by
  assumption.
  So \( \lambda_\star \) is a bifurcation point.
\end{proof}

The more classical variational bifurcation theorems by Marino~\cite{Marino_Bif},
B\"ohme~\cite[Satz~II.1]{Boehme} and Rabinowitz~\cite[Theorem~11.4]{Rab_Minimax} apply if there is
\( \mu\in\R \) such that the self-adjoint operator \( L+\mu K \) generates a norm.
This assumption holds in the context of nonlinear elliptic boundary value problems
involving divergence-form operators with diffusion coefficients \( \sigma \) having a fixed sign. In our
setting with sign-changing \( \sigma \), this is not the case.

\subsubsection{Global Bifurcation}

Rabinowitz' Global Bifurcation Theorem~\cite{Rab_SomeGlobal}  states  that
the   solutions bifurcating from an eigenvalue of odd algebraic multiplicity
lie on solution continua that are unbounded or return to the trivial
solution branch \( \setwt{(0, \lambda)}{\lambda \in \R} \) at some other bifurcation point.
Here, a solution continuum is a closed and connected set consisting of solutions. Given that the
proof of this bifurcation theorem uses Leray-Schauder degree theory, more restrictive compactness properties
are required to be compared to \cref{thm:VariationalBifurcation}. On the other hand, no variational
structure is assumed. In order to avoid technicalities, we state a simplified variant of this result from
Theorem~II.3.3 in~\cite{Kielhoefer_Bifurcation}. The set \( \mathcal{S} \subset H \times \R \) denotes the
closure of nontrivial solutions of \( F(u, \lambda) = 0 \) in \( H \times \R \). In particular, the statement
\( (0,\lambda_\star)\in \mathcal{S} \) is equivalent to saying that \( (0,\lambda_\star) \) is a bifurcation point for
\( F(u,\lambda)=0 \), i.e., there are solutions \( (u^n,\lambda^n) \in H \sm \{0\}\times\R \)
such that \( (u^n,\lambda^n)\to (0,\lambda_\star) \) in \( H\times\R \) as \( n\to\infty \).

\begin{thm}[Rabinowitz]\label{thm:Rabinowitz}
  Suppose \( H \) is a separable real Hilbert space and that \( F\in \scrC^1(H\times \R,H) \) is given by \( F(u,
  \lambda) = L u - \lambda K u - \mathcal{G}(u) \) where
  \begin{itemize}
    \item[(i)] \( L : H \to H \) is  a linear invertible self-adjoint Fredholm operator,
    \item[(ii)] \( K : H \to H \) is a linear compact and positive self-adjoint operator,
    \item[(iii)] \( \mathcal{G}: H \to H \) is compact with \( \mathcal{G}(0)=0 \) and \( \mathcal{G}'(0) = 0 \).
  \end{itemize}
  Suppose that \( \lambda_\star \in \R \) is such that the dimension of
  \( \ker(L-\lambda_\star K ) \) is odd.
  Then \( (0, \lambda_\star) \in \mathcal{S} \).
  Moreover, if \( \mathcal{C} \) denotes the connected component of \( (0,\lambda_\star) \) in \( \mathcal{S} \), then
  \begin{itemize}
    \item[(I)] \( \mathcal{C} \) is unbounded or
    \item[(II)] \( \mathcal{C} \) contains a point \( (0, \lambda^\star) \) with \( \lambda^\star \neq
          \lambda_\star \).
  \end{itemize}
\end{thm}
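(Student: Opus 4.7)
The plan is to reduce the equation $F(u,\lambda)=0$ to the standard fixed-point form required by Rabinowitz' Global Bifurcation Theorem in the version of~\cite[Theorem~II.3.3]{Kielhoefer_Bifurcation} and then to verify its hypotheses. Multiplying by the bounded inverse $L^{-1}$ rewrites the equation equivalently as
\[
  u - \lambda T u - g(u) = 0,
  \qquad T \coloneqq L^{-1} K, \qquad g \coloneqq L^{-1} \mathcal{G},
\]
where $T$ is compact on $H$ (composition of a bounded operator with a compact one) and $g : H \to H$ is compact with $g(0) = 0$ and $g'(0) = 0$. The cited theorem guarantees the global alternative (I)/(II) at any characteristic value of $T$ of odd \emph{algebraic} multiplicity, whereas our assumption controls only the \emph{geometric} multiplicity $\dim \ker(L - \lambda_\star K) = \dim \ker(I - \lambda_\star T)$. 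The main task is therefore to show that these two numbers coincide at every nonzero characteristic value of $T$.

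To prove this, note first that $\lambda_\star \neq 0$ since $L$ is invertible. Assume for contradiction that the generalized kernel strictly exceeds the kernel; then one may choose $w \in \ker((I - \lambda_\star T)^2) \setminus \ker(I - \lambda_\star T)$, the general case $n \geq 2$ reducing to this by replacing $w$ with $(I - \lambda_\star T)^{n-2} u$. Setting $v \coloneqq (I - \lambda_\star T) w \neq 0$ yields $v \in \ker(L - \lambda_\star K)$ and $L v = (L - \lambda_\star K) w$. Pairing with $v$ and using the self-adjointness of $L - \lambda_\star K$ (inherited from $L$ and $K$) gives
\[
  \skp{L v}{v} = \skp{(L - \lambda_\star K) w}{v} = \skp{w}{(L - \lambda_\star K) v} = 0.
\]
On the other hand $L v = \lambda_\star K v$ yields $\skp{L v}{v} = \lambda_\star \skp{K v}{v}$, so $\skp{K v}{v} = 0$. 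Positivity and self-adjointness of $K$ then force $K v = 0$ (via the square root $K^{1/2}$, since $\|K^{1/2}v\|^2=0$), whence $L v = \lambda_\star K v = 0$, and finally $v = 0$ by invertibility of $L$, contradicting $v \neq 0$.

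With algebraic multiplicity thus equal to the (odd) geometric multiplicity, Kielh\"ofer's version of Rabinowitz' theorem applies to the equivalent equation $u - \lambda T u - g(u) = 0$ at $\lambda_\star$ and produces a connected component of the nontrivial solution set through $(0,\lambda_\star)$ satisfying (I) or (II); this component is precisely the component $\mathcal{C}$ of the original problem since the two equations have the same solution set. The only substantial obstacle in this plan is the multiplicity comparison carried out above; the reduction to fixed-point form and the final invocation of~\cite[Theorem~II.3.3]{Kielhoefer_Bifurcation} are routine.
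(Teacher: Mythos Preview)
Your proposal is correct and follows the same route as the paper: reduce to the standard Rabinowitz alternative~\cite[Theorem~II.3.3]{Kielhoefer_Bifurcation} in fixed-point form and observe that, under the self-adjointness assumptions on \(L\) and \(K\), the algebraic and geometric multiplicities of \(\lambda_\star\) coincide. The paper itself does not supply a proof of the theorem; it merely cites Kielh\"ofer and remarks in the paragraph following the statement that self-adjointness forces algebraic multiplicity to equal geometric multiplicity. You have written out this multiplicity argument in full, which is a welcome addition; the computation \(\skp{Lv}{v}=\skp{(L-\lambda_\star K)w}{v}=\skp{w}{(L-\lambda_\star K)v}=0\) together with positivity of \(K\) and invertibility of \(L\) is exactly what is needed.
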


A more general version of this result  holds  in Banach spaces and does not involve any self-adjointness
assumption. It then claims the above-mentioned properties of  \( \mathcal{C} \) assuming that \( \lambda_\star \) is an eigenvalue of odd algebraic multiplicity.  Under our more restrictive assumptions
including self-adjointness the algebraic multiplicity of \( \lambda_\star \) is equal to its geometric
multiplicity and hence to the dimension of the corresponding eigenspace.

\subsection{Proof of \texorpdfstring{\cref{thm:bif}}{\ref{thm:bif}}}

It suffices to show that \cref{eq:NLeq} fits into the abstract framework required by the above theorems.
As a Hilbert space we choose \( H \coloneqq \rmH_0^1(\Omega) \) with inner product \( \skpH{\cdot}{\cdot} \).
By \cref{prop:H1Theory} a solution \( (u,\lambda) \) of \cref{eq:NLeq} is nothing but a solution of \( F(u,\lambda)=0 \) where
\begin{equation}\label{eq:F}
  F(u,\lambda) \coloneqq \As u - (\lambda+\las) C u - \mathcal{G}(u).
\end{equation}
By \cref{prop:Aproperties}, \( \As : \rmH_0^1(\Omega) \to \rmH_0^1(\Omega) \) is a bounded,
linear, self-adjoint and invertible operator and \( C : \rmH_0^1(\Omega) \to \rmH_0^1(\Omega) \) is a
linear, compact and self-adjoint operator. Moreover, by Sobolev's Embedding
\( \rmH_0^1(\Omega)\hookrightarrow\rmL^4(\Omega) \)
for \( N \in \{1,2,3,4\} \), the mapping \( \mathcal{G}:\rmH_0^1(\Omega)\to \rmH_0^1(\Omega) \)
given by \( \skpH{\mathcal{G}(u)}{\phi}= \int_\Omega \kappa(x) u^3 \phi \di{x} \) is well-defined and
smooth with \( \mathcal{G}(0)=0 \) and \( \mathcal{G}'(0)=0 \).
The Rellich-Kondrachov Theorem implies that \( \mathcal{G} \) is compact due to \( N \in \{ 1, 2
,3 \} \). Note that for \( N=4 \) the exponent \( 4=\frac{2N}{N-2} \) is the Sobolev-critical exponent where
compactness fails. We thus conclude that the assumptions (i), (ii), (iii) of both
\cref{thm:VariationalBifurcation} and \cref{thm:Rabinowitz} hold for \( L \coloneqq \As \), \( K \coloneqq C \) with bifurcation parameter \( \widetilde{\lambda} \coloneqq \lambda + \las \).

\medskip

The energy functional \( \Psi(\cdot, \lambda) \coloneqq \Psi_\lambda : \rmH_0^1(\Omega) \to \R \) required
for \cref{thm:VariationalBifurcation} is given by \cref{eq:def_Phi}.
Then \( \Psi \in \scrC^2(\rmH_0^1(\Omega) \times \R, \R) \) with
\begin{equation}\label{eq:FunctionalDerivative}
  \begin{aligned}
    \Psi_\lambda'(u)[\phi]
     & = \int_\Omega  \sigma(x)\nabla u\cdot\nabla \phi \di{x}
    - \lambda \int_\Omega c(x)u\phi \di{x}
    - \int_\Omega \kappa(x) u^3 \phi \di{x}
    \\
     & =  \skpH{ A u - \lambda C u - \mathcal{G}(u)}{\phi},
  \end{aligned}
\end{equation}
so \( F(u,\lambda)=\nabla\Psi_\lambda(u)= Au-\lambda Cu-\mathcal{G}(u) \). Moreover,
\[
  \ker(\Psi_{\lambda_\star}''(0))
  = \ker(\As - (\lambda_\star+\las)  C)
  = \spa\setwt{\phi_j}{\lambda_j=\lambda_\star}
\]
for \( \lambda_j \) as in \cref{cor:ONB}. So \cref{thm:VariationalBifurcation} implies that each \( \lambda_j \) is a
bifurcation point for \cref{eq:NLeq}. Finally, \cref{thm:Rabinowitz} shows that every such eigenvalue with
odd-dimensional eigenspace satisfies Rabinowitz' Alternative (I) or (II) from above, which finishes the proof
of \cref{thm:bif}. \qed{}

\subsection{Proof of \texorpdfstring{\cref{cor:bif1D}}{\ref{cor:bif1D}}}

We now sharpen our results from \cref{thm:bif} for the one-dimensional boundary value problem
\[
  -\frac{\di{}}{\di{x}}\plr{\sigma(x) \, u'(x)} - \lambda \, c(x) \, u = \kappa(x) u^3
  \quad \text{in }\Omega,
  \qquad u \in \rmH_0^1(\Omega)
\]
from \cref{eq:NLeq1D}.
The assumptions on \( \sigma \), \( c \),  \( \kappa \) and \( \Omega = (a_-, a_+) \subset \R \) were specified
in the Introduction.
We want to verify that \cref{hyp:sigma_c,hyp:w_T_cor} are satisfied in this context.
While \cref{hyp:sigma_c} is trivial, the verification \cref{hyp:w_T_cor}
dealing with the  weak \( \ttT \)-coercivity of \( (u,v) \mapsto a(u, v) \coloneqq \int_\Omega \sigma(x) \, u' v' \di{x} \) requires some work. The following result seems to be well-known to experts, but a reference
appears to be missing in the literature.

\begin{lem}\label{lem:VerificationHypotheses1}
  Let \( \Omega \), \( \sigma \)  and \( c \) be given as in \cref{cor:bif1D}.
  Then the bilinear form \( a \) is weakly \( \ttT \)-coercive.
  In particular, \cref{hyp:w_T_cor} holds.
\end{lem}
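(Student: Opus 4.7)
The plan is to exhibit an explicit operator $\ttT$ of ``sign-flip plus affine lifting'' type. Because the interface is the single point $\{0\}$, the entire construction collapses to manipulations with the scalar trace $u(0)$, and the defect between $a(u, \ttT v)$ and the coercive form $\skpH{u}{v}$ turns out to be rank-one --- hence trivially compact. In particular, no contrast assumption relating $\sigma_\pm$ to $|a_\pm|$ will be needed.

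Concretely, I will set $\ttT u(x) \coloneqq u(x)$ for $x \in \Omega_+$ and
\[
  \ttT u(x) \coloneqq -u(x) + \frac{2(x - a_-)}{|a_-|}\, u(0), \qquad x \in \Omega_-.
\]
First, I would check that $\ttT : \rmH_0^1(\Omega) \to \rmH_0^1(\Omega)$ is well defined, bounded, and invertible: both boundary values at $a_\pm$ vanish, the two one-sided traces at $x = 0$ agree (both equal $u(0)$), and piecewise $\rmH^1$-regularity is clear. Boundedness follows from the one-dimensional trace bound $|u(0)| \les \normH{u}$, and a direct calculation using $(\ttT u)(0) = u(0)$ gives $\ttT^2 = \id$, so $\ttT$ is its own inverse.

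Next, I would compute $a(u, \ttT v)$. The contribution from $\Omega_+$ is $\sigma_+ \int_{\Omega_+} u' v' \di{x}$, and on $\Omega_-$ the derivative $(\ttT v)' = -v' + v(0)/|a_-|$ is constant in its second summand; using $u(a_-) = 0$ so that $\int_{\Omega_-} u' \di{x} = u(0)$, I obtain
\[
  a(u, \ttT v) = \skpH{u}{v} - \frac{2 |\sigma_-|}{|a_-|} \, u(0) \, v(0).
\]
Defining $\ttK \colon \rmH_0^1(\Omega) \to \rmH_0^1(\Omega)$ via the Riesz representation of the rank-one bilinear form $(u, v) \mapsto \tfrac{2|\sigma_-|}{|a_-|}\, u(0) v(0)$ then yields a compact (in fact rank-one) self-adjoint operator with $a(u, \ttT v) + \skpH{\ttK u}{v} = \skpH{u}{v}$, which is obviously continuous and coercive. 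Hence \cref{hyp:w_T_cor} holds.

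The only real step is the guess for $\ttT$: once one recognises that the sign-flip $-u$ on $\Omega_-$ is the natural way to compensate the sign change of $\sigma$, and that the resulting failure of the Dirichlet condition at $a_-$ is repaired by the simplest possible affine lifting of the trace $u(0)$, the rest is bookkeeping. I do not expect any serious obstacle in 1D. In higher dimensions the analogous construction requires a genuine extension operator across the interface, the associated error term is infinite-dimensional, and absorbing it into a compact $\ttK$ becomes the real difficulty --- this is the content of the \( \ttT \)-coercivity theory of~\cite{BonCheCia_TCoercivity, BBDhia_MeshRequirements} cited in the paper.
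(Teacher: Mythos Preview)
Your argument is correct and in fact cleaner than the paper's. (There is a harmless slip: the derivative on $\Omega_-$ is $(\ttT v)' = -v' + \tfrac{2}{|a_-|}v(0)$, not $v(0)/|a_-|$; the factor $2$ reappears in your final formula, and in any case only the rank-one structure matters.)

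The paper takes the ``generic'' $\ttT$-coercivity route: it reflects across the interface with a small dilation parameter $m$ and a cutoff $\chi$, setting $\ttT u(x) = 2\chi(x)\,u(-mx) - u(x)$ on $\Omega_-$, and then estimates $a(u,\ttT u)$ from below by $\normH{u}^2$ minus error terms. One error is absorbed by choosing $m$ small, the other produces the compact correction $\ttK = \alpha_2 m^{-3/2} C$. Your construction bypasses all of this: because the interface is a single point, you can lift the trace $u(0)$ by an explicit affine function, and the identity $a(u,\ttT v) = \skpH{u}{v} - \tfrac{2|\sigma_-|}{|a_-|}\,u(0)\,v(0)$ drops out exactly, with a rank-one defect. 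No smallness parameter, no cutoff, no inequalities. The trade-off is that the paper's reflection scheme is the template that generalises to higher dimensions (where the interface has positive measure and no finite-rank lifting exists), whereas your argument is specific to the one-dimensional geometry --- a point you already acknowledge in your final paragraph.
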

\begin{proof}
  Let \( \chi\in \scrC_0^\infty(\Omega) \) with \( \chi(x) = 1 \) for \( x \) close to \( 0 \).
  Then define
  \[
    \ttT u(x) = u(x)
    \quad \text{if } 0 < x < a_+, \qquad
    \ttT u(x) = 2\chi(x) u(-m x) - u(x)
    \quad \text{if } a_- < x <0,
  \]
  where \( m \in \big( 0, \frac{a_+}{|a_-|}\big) \) will be chosen sufficiently small.
  Then \( \ttT \) is a well-defined bijective operator on \( \rmH_0^1(\Omega) \) because of \( \ttT \circ \ttT = \mathrm{id} \).

  Moreover,
  \begin{align*}
    a(u, \ttT u)
     & = \normH{u}^2
    + 2 m \int_{a_-}^0 \abs{\sigma_-} \chi(x) u'(-m x) u'(x) \di{x}
    - 2 \int_{a_-}^0 \abs{\sigma_-} \chi'(x) u(-m x) u'(x)\di{x}
    \\
     & \geq \normH{u}^2
    - 2 m \norm{\chi}_\infty|\sigma_-| \norm{u'(-m \, \cdot)}_{\rmL^2([a_-, 0])} \norm{u'}_{\rmL^2([a_-, 0])}
    \\
     & - 2 \norm{\chi'}_\infty |\sigma_-|\norm{u(-m \, \cdot)}_{\rmL^2([a_-, 0])} \norm{u'}_{\rmL^2([a_-, 0])}
    \\
     & \geq \normH{u}^2
    - 2 \sqrt{m} \norm{\chi}_\infty|\sigma_-| \norm{u'}_{\rmL^2([0, m |a_-|])} \norm{u'}_{\rmL^2([a_-, 0])}
    \\ &-  2m^{-1/2}\norm{\chi'}_\infty |\sigma_-| \norm{u}_{\rmL^2([0, m |a_-|])}
    \norm{u'}_{\rmL^2([a_-, 0])}                                                                               \\
     & \geq \normH{u}^2
    -  \alpha_1 \sqrt{m}  \normH{u}^2  - \alpha_2m^{-1/2}   \normH{u} \normL{u}                                \\
     & \geq (1- \alpha_1 \sqrt{m} - \alpha_2 \sqrt{m})  \normH{u}^2  - \alpha_2 m^{-3/2}  \normL{u}^2
  \end{align*}
  for \( \alpha_1, \alpha_2 > 0 \). Choosing \( m \in \big( 0, \frac{a_+}{|a_-|}\big) \)
  such that \( (\alpha_1+\alpha_2)\sqrt m \leq \frac{1}{2} \) we obtain
  \[
    a(u, \ttT u) + \skpH{\ttK u}{u}
    \geq \frac{1}{2} \normH{u}^2
  \]
  where \( \ttK \coloneqq \alpha_2 m^{-3/2} C : \rmH_0^1(\Omega) \to  \rmH_0^1(\Omega) \) is a compact operator,
  see \cref{eq:def_op_AC}. Hence, \( a \) is weakly \( \ttT \)-coercive.
\end{proof}

\begin{rem}\label{rem:special_T_1d}
  In higher-dimensional settings the verification of the weak \( \ttT \)-coercivity
  condition is much more sensitive with respect to the data. This concerns \( \Omega_+,\Omega_- \), the
  geometric properties of the interface \( \Gamma=\ov\Omega_+\cap \ov\Omega_- \) and the coefficients
  \( \sigma_+,\sigma_- \). In particular weak \( \ttT \)-coercivity may break down for critical ranges of the
  contrast \( \sigma_+(x)/\sigma_-(x) \), see for instance Theorem~5.1 and Theorem~5.4 in~\cite{BBDhia_MeshRequirements}.
\end{rem}

We thus conclude that the assumptions of \cref{thm:bif} are verified and hence the existence of infinitely many bifurcating branches \( \mathcal{C}_j \) is ensured.

To finish the proof of \cref{cor:bif1D} it now remains to prove the nodal characterization of all nontrivial solutions \( (u,\lambda) \in \mathcal{C}_j \) emanating from \( \lambda_j \).
Choosing an appropriate numbering of the eigenvalue sequence \( (\lambda_j) \) we need to prove that
nontrivial solutions \( (u,\lambda)\in\mathcal{C}_j \) have the stated nodal pattern.
To prove this we first compute and analyze the eigenpairs of the linear problem.

\medskip

\paragraph{Step 1: Nodal characterization of the eigenfunctions}

\begin{lem}\label{lem:sort_eigs_1d}
  Let \( \Omega \), \( \sigma \)  and \( c \) be given as in \cref{cor:bif1D} and let
  \( \plr{\phi_j,\lambda_j}_{j \in \Z} \) denote the sequence of eigenpairs for the one-dimensional boundary
  value problem \cref{eq:NLeq1D} as in \cref{cor:ONB}, set \( k_- \coloneqq  \sqrt{c_-/|\sigma_-|} \), \( k_+
  \coloneqq \sqrt{c_+/\sigma_+} \). Then each eigenvalue \( \lambda_j \) is simple and in particular
  \[
    -\infty \swarrow \cdots  < \lambda_{-2} < \lambda_{-1}   < \lambda_0   < \lambda_1 <  \lambda_2 < \cdots
    \nearrow +\infty.
  \]
  This sequence can be ordered in the following way:

  \begin{itemize}
    \item[(i)] For \( j \geq 1 \), \( \lambda_j > 0 \) is the only eigenvalue in the interval
          \( \plr{ \frac{j^2 \pi^2}{k_+^2 a_+^2}, \frac{{(2 j + 1)}^2 \pi^2}{4 k_+^2 a_+^2} } \) and \( \phi_j \) has
          \( j \) interior zeros in \( \Omega_+ \)
          with \( \abs{\phi_j'} > 0 \) on \( \ov{\Omega_-} \).

    \item[(ii)] For \( j \leq -1 \), \( \lambda_j < 0 \) is the only eigenvalue in the interval
          \( \plr{ -\frac{{(2 j - 1)}^2 \pi^2}{4 k_-^2 a_-^2}, -\frac{j^2 \pi^2}{k_-^2 a_-^2} } \)
          and \( \phi_j \) has \( \abs{j} \) interior zeros in \( \Omega_- \)  with \( \abs{\phi_j'} > 0 \) on
          \( \ov{\Omega_+} \).

    \item[(iii)] \( \lambda_0 \) is the only eigenvalue in the interval \( \plr{ -\frac{\pi^2}{4 k_-^2 a_-^2},
            \frac{\pi^2}{4 k_+^2 a_+^2} } \) and \( \phi_0 \) has no interior zeros in \( \Omega \) with
          \( \abs{\phi_0'} > 0 \) on \( \ov{\Omega_\pm} \). Moreover,
          \begin{align*}
            \lambda_0>0 \; \Leftrightarrow \; \frac{\sigma_+a_-}{a_+\sigma_-} > 1,\quad
            \lambda_0<0 \; \Leftrightarrow \; \frac{\sigma_+a_-}{a_+\sigma_-} < 1,\quad
            \lambda_0 = 0 \; \Leftrightarrow \;  \frac{\sigma_+a_-}{a_+\sigma_-} = 1.
          \end{align*}
  \end{itemize}
\end{lem}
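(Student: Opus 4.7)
My plan is to prove the lemma by explicit computation. Because $\sigma$ and $c$ are piecewise constant, the eigenvalue equation reduces on each side of the interface to a constant-coefficient ODE: $u'' = -\lambda k_+^2 u$ on $\Omega_+$ and $u'' = \lambda k_-^2 u$ on $\Omega_-$ (recall $\sigma_- < 0$). Imposing the Dirichlet condition at $a_\pm$ leaves a one-parameter family of candidates on each side, and the two transmission conditions $u(0^-) = u(0^+)$, $\sigma_- u'(0^-) = \sigma_+ u'(0^+)$ amount to a $2 \times 2$ linear system in those two parameters whose determinant vanishes precisely at the eigenvalues. Consequently every nontrivial eigenvalue is automatically simple, and it only remains to locate the eigenvalues and to read off the nodal pattern.

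For $\lambda > 0$, the candidates are $\alpha \sin(\sqrt{\lambda} k_+(a_+ - x))$ on $\Omega_+$ and $\beta \sinh(\sqrt{\lambda} k_-(x - a_-))$ on $\Omega_-$. Eliminating $\alpha, \beta$ from the transmission conditions produces the dispersion equation
\[
  H(s) \coloneqq \tan(s) \coth(\rho s) = C,
  \qquad s \coloneqq \sqrt{\lambda}\, k_+ a_+,\; \rho \coloneqq \tfrac{k_- |a_-|}{k_+ a_+},\; C \coloneqq \tfrac{\sigma_+ k_+}{|\sigma_-| k_-}.
\]
The analytic heart of the proof is to show that $H$ is strictly increasing on each component $(m\pi, m\pi + \pi/2)$ of $\{H > 0\}$ with $m \ge 0$. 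This I would do by computing
\[
  (\ln H)'(s) = \frac{2}{\sin 2s} - \frac{2\rho}{\sinh 2\rho s}
\]
and applying the strict elementary inequalities $\sin t < t$ and $\sinh y > y$ for $t, y > 0$ with $t = 2(s - m\pi) \in (0,\pi)$ and $y = 2\rho s$. Combined with the intermediate value theorem, this yields a unique root in each interval with $m \geq 1$ (producing $\lambda_j$ for $j \geq 1$), while the first interval $(0, \pi/2)$ contributes a root if and only if $C > 1/\rho$, which rearranges to $\sigma_+ a_- / (\sigma_- a_+) > 1$.

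The case $\lambda < 0$ is handled identically after swapping the roles of $\Omega_+$ and $\Omega_-$: the conjugate dispersion equation $\tan(s') \coth(s'/\rho) = 1/C$ with $s' = \sqrt{-\lambda}\, k_- |a_-|$ furnishes $\lambda_{-j}$ for $j \geq 1$, and an eigenvalue in the first interval precisely when $\sigma_+ a_-/(\sigma_- a_+) < 1$. For $\lambda = 0$ the piecewise-affine ansatz forces $\sigma_+ a_- = \sigma_- a_+$, closing the trichotomy for $\lambda_0$. The nodal pattern is then read off from the explicit formulas: on the oscillatory side the interior zeros of $\sin(\tau(a_\pm - x))$ number exactly $\lfloor s/\pi \rfloor$, giving $j$ interior zeros when $s \in (j\pi, j\pi + \pi/2)$, while on the hyperbolic side the $\sinh$-profile has no interior zero and its derivative is of $\cosh$-type, hence strictly of one sign up to the closed endpoint.

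The main obstacle is the monotonicity of $H$ on the borderline interval $(0, \pi/2)$: away from the origin both terms in $(\ln H)'$ are comfortably separated, but as $s \to 0^+$ both terms blow up like $1/s$ and the comparison is tight; it is the strictness of $\sinh(\rho s) > \rho s$ that saves the day. This same strictness is precisely what produces the clean trichotomy for the sign of $\lambda_0$ in terms of $\sigma_+ a_-/(\sigma_- a_+)$, and everything else is routine verification.
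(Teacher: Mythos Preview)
Your proposal is correct and follows essentially the same route as the paper: write the eigenfunction explicitly on each side, impose the transmission conditions to obtain the same dispersion relation (the paper writes it as $\tfrac{\tan(\tau_j k_+ a_+)}{\tanh(\tau_j k_- a_-)}\cdot\tfrac{\sigma_- k_-}{\sigma_+ k_+}=1$, which is your $H(s)=C$ in different variables), and read off the localization of roots and the nodal count. The only difference is that the paper dismisses the key step with the phrase ``by elementary monotonicity considerations'', whereas you actually supply the argument via $(\ln H)'(s)=\tfrac{2}{\sin 2s}-\tfrac{2\rho}{\sinh 2\rho s}$ and the comparison $\sin t<t\le y/\rho<\sinh(y)/\rho$ with $t=2(s-m\pi)$, $y=2\rho s$; this is a genuine (and clean) addition over the paper's proof rather than a different approach.
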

\begin{proof}
  Any eigenpair \( (\phi, \lambda) \in \rmH_0^1(\Omega) \times \R \) satisfies
  \begin{align*}
     & -\phi''(x) = -\lambda k_-^2 \phi(x)
    \quad \text{on } (a_-, 0),
    \\
     & -\phi''(x) = +\lambda k_+^2 \phi(x)
    \quad \text{on } (0, a_+),
    \\
     & \phi, \sigma\phi' \in \scrC([a_-,a_+]),
    \qquad \phi(a_-)=\phi(a_+)=0.
  \end{align*}
  In the following, \( \tau_j:=\sqrt{|\lambda_j|} \).

  \medskip

  \paragraph{(i)\; Positive eigenvalues}
  Solving the ODE and exploiting the continuity of eigenfunctions as well as the homogeneous Dirichlet boundary conditions, we find the following formula for eigenfunctions \( \phi_j \) associated with positive eigenvalues (\( \lambda_j > 0 \))
  \begin{align}\label{eq:eigenfunctionsI}
    \phi_j(x) & = \alpha_j \begin{dcases}
      \frac{\sinh(\tau_j k_- (a_- - x))}{\sinh(\tau_j k_- a_-)}
       & \text{if } x \in (a_-, 0) \\
      \frac{\sin(\tau_j k_+ (a_+ - x))}{\sin(\tau_j k_+ a_+)}
       & \text{if } x \in (0, a_+)
    \end{dcases}
  \end{align}
  The parameter \( \alpha_j \in \R \sm \{0\} \) is chosen such that \( \norm{\phi_j}_c = 1 \).
  The equation for \( \lambda_j \) now results from the condition that \( \sigma\phi_j' \) has to be continuous.
  This means
  \begin{equation}\label{eq:lambdaj>0}
    \frac{\tan\plr{\tau_j k_+ a_+}}{\tanh\plr{\tau_j k_- a_-}} \cdot \frac{\sigma_- k_-}{\sigma_+ k_+} = 1,
    \qquad (\lambda_j > 0).
  \end{equation}
  By elementary monotonicity considerations one finds that this equation has a unique solution such that  \( \tau_j k_+ a_+ \in \plr{j \pi, (j+\frac{1}{2}) \pi} \) for \( j \geq 1 \).
  Moreover, it has a unique solution such that  \( \tau_0 k_+a_+ \in \plr{0, \frac{1}{2} \pi} \) if
  and only if \( \frac{\sigma_+ a_-}{ \sigma_- a_+}>1 \).  No further solutions  exist.
  We thus obtain:
  \begin{itemize}
    \item For \( j \geq 1 \), there is a unique solution \( \lambda_j \) in the interval \( \plr{ \frac{j^2 \pi^2}{k_+^2 a_+^2}, \frac{{(2 j + 1)}^2 \pi^2}{4 k_+^2 a_+^2} } \) and \( \phi_j \) has   \( j \) interior zeros in \( \Omega_+ \) with \( \abs{\phi_j'} > 0 \) on \( \ov{\Omega_-} \).

    \item If \( \frac{\sigma_+ a_-}{ \sigma_- a_+}>1 \), then there is a unique solution \( \lambda_0 \) in the interval \( \plr{ 0, \frac{\pi^2}{4 k_+^2 a_+^2} } \) and  \( \phi_0 \) has no interior zeros in \( \Omega \) with \( \abs{\phi_0'} > 0 \) on \( \ov{\Omega_\pm} \).
  \end{itemize}

  \medskip

  \paragraph{(ii)\; Negative eigenvalues}
  Similarly, we obtain for the negative eigenvalues (\( \lambda_j < 0 \))
  \begin{equation}\label{eq:eigenfunctionsII}
    \phi_j(x)  = \alpha_j \begin{dcases}
      \frac{\sin(\tau_j k_- (a_- - x))}{\sin(\tau_j k_- a_-)}
       & \text{if } x \in (a_-, 0) \\
      \frac{\sinh(\tau_j k_+ (a_+ - x))}{\sinh(\tau_j k_+ a_+)}
       & \text{if } x \in (0, a_+)
    \end{dcases}
  \end{equation}
  and
  \begin{equation}\label{eq:lambdaj<0}
    \frac{\tan\plr{\tau_j k_- a_-}}{\tanh\plr{\tau_j k_+ a_+}}
    \cdot \frac{\sigma_+ k_+}{\sigma_- k_-} = 1,
    \qquad (\lambda_j < 0).
  \end{equation}
  As for the positive eigenvalues one finds:
  \begin{itemize}
    \item For \( j \geq 1 \), there is a unique solution \( \lambda_{-j} \) in the interval
          \( \plr{ -\frac{{(2 j + 1)}^2 \pi^2}{4 k_-^2 a_-^2}, -\frac{j^2 \pi^2}{k_-^2 a_-^2} } \) and
          \( \phi_j \) has \( \abs{j} \) interior zeros in \( \Omega_- \)  with \( \abs{\phi_j'} > 0 \) on
          \( \ov{\Omega_+} \).
    \item  If \( \frac{\sigma_+ a_-}{ \sigma_- a_+}<1 \), then there is a unique solution \( \lambda_0 \) in
          the interval \( \plr{ -\frac{\pi^2}{4 k_-^2 a_-^2}, 0 } \) and  \( \phi_0 \) has no interior zeros in \( \Omega \) with
          \( \abs{\phi_0'} > 0 \) on \( \ov{\Omega_\pm} \).
  \end{itemize}

  \medskip

  \paragraph{(iii)\; Zero eigenvalue}
  The eigenvalue zero only occurs if \( \frac{\sigma_-}{\sigma_+} = \frac{a_-}{a_+} \).
  Here the associated eigenfunction is given by (\( \lambda_0 = 0 \))
  \begin{equation}  \label{eq:eigenfunctionsIII}
    \phi_0(x) = \alpha_0 \begin{dcases}
      1-\frac{x}{a_-}
       & \text{if } x \in (a_-, 0) \\
      1-\frac{x}{a_+}
       & \text{if } x \in (0, a_+)
    \end{dcases}
    \qquad (\lambda_0=0).
  \end{equation}
  \begin{itemize}
    \item  If \( \frac{\sigma_+ a_-}{ \sigma_- a_+}=1 \), then \( \lambda_0=0 \)   and  \( \phi_0 \) has no
          interior zeros in \( \Omega \) with \( \abs{\phi_0'} > 0 \) on \( \ov{\Omega_\pm} \).
  \end{itemize}
\end{proof}

\begin{rem}\label{rem:ordering}
  This ordering allows for configurations
  where \( \lambda_0 \) is not of the least absolute value, say
  \( \ldots<\lambda_{-M} < \cdots < \lambda_{-1}<0<\lambda_0<\lambda_1<\ldots \) with
  \( \lambda_0>|\lambda_{-M}| > \cdots > |\lambda_{-1}| \) for any given \( M\in\N \).
  In fact, choose \( a_- = -1 \), \( a_+ = 1 \), \( c_+ = c_- = 1 \), \( \sigma_- = -m^{-1} \) and define \( \sigma_+ \) via \cref{eq:lambdaj>0}.
  This means that we choose \( \sigma_+ = \sigma_+(m) \in (\frac{4\lambda_0}{\pi^2},\infty) \) as the largest
  positive solution of
  \[
    \frac{\tan(\sqrt{\lambda_0/\sigma_+})}{\sqrt{\sigma_+}}
    = \frac{\tanh(\sqrt{\lambda_0/|\sigma_-|})}{\sqrt{|\sigma_-|}}
    = \sqrt{m}\tanh(\sqrt{\lambda_0 m}).
  \]
  Then \cref{lem:sort_eigs_1d}~(iii) and \( |k_-a_-|=m^{1/2} \) imply that all negative eigenvalues converge
  to 0 and \( \sigma_+(m)\to \frac{4\lambda_0}{\pi^2} \)
  as \( m\to\infty \) whereas \( \lambda_0>0 \) is invariant with respect to \( m \) by choice of \( \sigma_+ \). In this case clustering of eigenvalues at \( 0 \) occurs.
\end{rem}

\paragraph{Step 2: Nodal characterization close to the bifurcation points}

Next we deduce that the nontrivial solutions \( (u, \lambda) \in \mathcal{C}_j \) sufficiently close to \( (0, \lambda_j) \) have the same nodal pattern as \( \phi_j \).
To see this, we first prove that if \( (u^n, \lambda^n) \in \mathcal{C}_j \) converges
to \( (0, \lambda_j) \) in \( \rmH_0^1(\Omega) \), then \( \tilde{u}^n \) converges uniformly on \( \ov\Omega \) to \( \phi_j/\normH{\phi_j} \)  where
\[
  \tilde{u}^n \coloneqq  \gamma_n u^n /   \normH{u^n}\qquad\text{where }
  \gamma_n:= \sign(\skpH{u^n}{\phi_j}).
\]
By the subsequence-of-subsequence argument, it suffices to prove that a subsequence of \( (\tilde u^n) \) has this
property. Since \( (\tilde u^n) \) is bounded, there is a weakly convergent subsequence with limit \( \phi \in
\rmH_0^1(\Omega) \).
Since \( u^n \) solves \cref{eq:NLeq1D} we have by \cref{prop:H1Theory}
\[
  \tilde{u}^n = \As^{-1}(\lambda^n+\las)C \tilde{u}^n  + \normH{u^n}^2 \As^{-1}\mathcal{G}(\tilde{u}^n)
\]
where \( \As^{-1} \) is bounded and \( C,\mathcal{G} \) are compact, see
the proof of \cref{thm:bif}. So \( \tilde{u}^n\rightharpoonup  \phi \) and \( \normH{u_n}^2\to 0 \) implies \( \tilde{u}^n\to \phi \) in \( \rmH_0^1(\Omega) \) where \( \phi = \As^{-1}(\lambda_j+\las)C\phi \).
In other words, \( \phi \) is an eigenfunction of \( \phi\mapsto -{c(x)}^{-1}\diver(\sigma(x)\nabla \phi) \)
associated with the eigenvalue \( \lambda_j \). Since the eigenspaces are one-dimensional, \( \phi \) is a multiple
of \( \phi_j \). Moreover,
from \( \normH{\tilde{u}^n}=1 \) we infer \( \normH{\phi}=1 \), so \( \skpH{\tilde{u}^n}{\phi_j}\geq
0 \) (by choice of \( \gamma_n \)) implies   \( \phi= + \phi_j /\normH{\phi_j} \).
We thus conclude that \( \tilde{u}^n\to \phi_j/\normH{\phi_j} \) in \( \rmH_0^1(\Omega) \) and hence uniformly
on \( \overline\Omega \) as \( n\to\infty \).  Integrating \cref{eq:NLeq1D} once, one finds that the convergence even holds in \( \scrC^1(\ov{\Omega_+}) \) and \( \scrC^1(\ov{\Omega_-}) \). So if infinitely many \( u^n \) had more than \( j \) interior zeros in \( \Omega_\pm \), then the collapse of zeros would cause at least one double zero of \( \phi_j \), but this is
false in view of our formulas for these eigenfunctions from above.
So almost all \( u^n \) have at most \( j \) interior zeros in \( \Omega_\pm \).
Similarly, almost all \( u^n \) have at least \( j \) zeros. So we conclude that the solutions close to the
bifurcation point have exactly \( j \) interior zeros in \( \Omega_\pm \) and are strictly monotone in \( \Omega_\mp \).

\medskip

\paragraph{Step 3: Nodal characterization along the whole branch}

We finally claim that this nodal property is preserved on connected subsets of \( \mathcal{S} \) that do not contain the trivial solution.
Indeed, the set of solutions on \( \mathcal{C}_j \sm \{(0, \lambda_j)\} \) with this property is open in \( \mathcal{S} \) with respect to the topology of \( \rmH_0^1(\Omega) \times \R \).
It is also closed in \( \mathcal{S} \) since double zeros cannot occur  (by the same arguments as above) and zeros cannot converge to the interface at \( x = 0 \) as the solutions evolve along the branch.
Indeed, in the latter case the equation on the monotone part would imply that the solution has to vanish  identically there, whence \( u \equiv  0 \) on \( \Omega \), which is impossible.
So we conclude that all elements on \( \mathcal{C}_j \sm \{(0, \lambda_j)\} \) have the claimed property and the proof is finished. \qed{}

\section{Variational methods}\label{sec:var_method}

We want to show that variational methods can be used to prove further existence and multiplicity results
for \cref{eq:NLeq} under  stronger assumptions than before.
Our aim is to prove the existence of infinitely many nontrivial solutions of
\begin{equation}\label{eq:NLeq_general}
  -\diver(\sigma(x) \, \nabla u) - \lambda \, c(x) \, u = \kappa(x)u^3,
  \qquad u\in \rmH_0^1(\Omega)
\end{equation}
for any given \( \lambda\in\R \). This means that any vertical line in a bifurcation diagram for
\cref{eq:NLeq}, say \cref{fig:1d:bif}, hits infinitely many nontrivial solutions of \cref{eq:NLeq_general}.
To show this we apply the generalized Nehari manifold approach from~\cite[Chapter~4]{SzuWet_Nehari}.
We start by considering the general case  where \( \Omega \subset \R^N \) is an arbitrary bound domain and
\( N \in \{1,2,3\} \). In this setting we will need more information about
the orthonormal basis of eigenfunctions from \cref{cor:ONB},
see \cref{hyp:sum_eigs} below. Since the latter can be checked in our one-dimensional model example,
the general result applies and leads to infinitely many solutions for any given \( \lambda \in \R \)
for \cref{eq:NLeq1D}, see \cref{cor:var}.

\subsection{The general case}

The variational approach aims at proving the existence of critical points of energy functionals. In our
case such a functional is given by  \( \Psi_\lambda : H \to \R \) where
\begin{equation*}
  \Psi_\lambda(u)
  \coloneqq \frac{1}{2} \int_\Omega \sigma(x) \abs{\nabla u(x)}^2  \di{x}
  - \frac{\lambda}{2} \int_\Omega c(x) \, {u(x)}^2 \di{x}
  -   \frac{1}{4} \int_\Omega \kappa(x) {u(x)}^4  \di{x}
\end{equation*}
and \( \lambda \in \R \) is fixed, see \cref{eq:def_Phi}. Of course, \( \Psi_\lambda \) is a well-defined smooth
functional on \( \rmH_0^1(\Omega) \), but the more natural setting for our analysis involves
another Hilbert space \( H \) that will be smaller or equal to \( \rmH_0^1(\Omega) \) under our assumptions. To define it,
denote by \( M \) the subspace of \( \rmH_0^1(\Omega) \) consisting of all finite
linear combinations of the eigenfunctions \( \plr{\phi_j}_{j\in\Z} \) from \cref{cor:ONB}.
Those exist by \cref{hyp:sigma_c,hyp:w_T_cor}. We recall
\begin{equation}\label{eq:muj}
  \skpL{\phi_i}{\phi_j} = \delta_{i,j}, \qquad
  a(\phi_i,\phi_j) = \lambda_j \delta_{i,j}, \qquad
  \pm\lambda_j \nearrow +\infty \text{ as } j \to \pm\infty.
\end{equation}
Then we define \( H \) to be the completion of \( M \) with respect to the norm \( \norm{\cdot} \coloneqq \skp{\cdot}{\cdot}^{1/2} \)
generated by the positive definite bilinear form
\begin{equation}\label{eq:def_norm}
  \skp{\sum_{i\in\Z} c_i\phi_i}{\sum_{j\in\Z} \tilde c_j\phi_j}
  \coloneqq \sum_{\lambda_i-\lambda\neq 0} c_i \tilde c_i |\lambda_i-\lambda|
  +  \sum_{\lambda_i-\lambda=0} c_i \tilde c_i.
\end{equation}
We will need \( H\subset \rmH_0^1(\Omega) \) in order to benefit both from Sobolev's Embedding Theorem and the
Rellich-Kondrachov Theorem in our analysis. So we have to ensure that the norm \( \norm{\cdot} \) dominates the
norm on \( \rmH_0^1(\Omega) \).  For this reason we add the following hypothesis.

\begin{assumption}\label{hyp:sum_eigs}
  There is a \( D>0 \) such that for all sequences \( {(c_j)}_{j\in\Z} \) with only finitely many non-zero entries  we
  have
  \[
    \sum_{i,j\in\Z} c_i c_j \skpH{\phi_i}{\phi_j}
    \leq D \sum_{i\in\Z} \abs{c_i}^2 (1+\abs{\lambda_j})
  \]
  where \( {(\phi_j)}_{j\in\Z} \) denotes the orthonormal basis from \cref{cor:ONB}.
\end{assumption}

\begin{prop}\label{prop:innerproduct}
  Let  \( \Omega \subset \R^N \) be a bounded domain, \( \lambda\in\R \), let
  \cref{hyp:sigma_c,hyp:w_T_cor,hyp:sum_eigs} hold. Then \( (H, \skp{\cdot}{\cdot}) \) is a
  Hilbert space such that \( H\subset \rmH_0^1(\Omega) \) is dense and \( \normH{\cdot} \les \norm{\cdot} \).
\end{prop}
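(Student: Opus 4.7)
The plan is to proceed in four steps: first verify that \cref{eq:def_norm} defines an inner product on \( M \), then establish the quantitative estimate \( \normH{u} \les \norm{u} \) on \( M \) using \cref{hyp:sum_eigs}, then identify the abstract completion of \( (M,\norm{\cdot}) \) with a linear subspace of \( \rmH_0^1(\Omega) \), and finally deduce density from \cref{cor:ONB}.

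Step one is bookkeeping: since \( \plr{\phi_j}_{j\in\Z} \) is an orthonormal basis of \( (\rmL^2(\Omega),\skpL{\cdot}{\cdot}) \), the coefficients of any \( u=\sum_j c_j\phi_j\in M \) are uniquely determined, all weights in \cref{eq:def_norm} are strictly positive, and only finitely many of the \( c_j \) are nonzero. Hence \( \skp{\cdot}{\cdot} \) is a well-defined positive definite symmetric bilinear form on \( M \).

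The central step is the estimate. For \( u=\sum_j c_j\phi_j\in M \), \cref{hyp:sum_eigs} combined with the triangle inequality \( 1+|\lambda_j|\leq (1+|\lambda|)+|\lambda_j-\lambda| \) gives
\[
  \normH{u}^2 \leq D \sum_{j\in\Z} |c_j|^2(1+|\lambda_j|) \leq D(1+|\lambda|)\sum_{j\in\Z} |c_j|^2 + D \sum_{\lambda_j \neq \lambda} |c_j|^2 |\lambda_j-\lambda|.
\]
The last term is bounded by \( D\norm{u}^2 \) directly by \cref{eq:def_norm}. For the remaining \( \sum_j |c_j|^2 \) I split according to whether \( \lambda_j=\lambda \): since the sequence \( \plr{\lambda_j}_{j\in\Z} \) from \cref{cor:ONB} accumulates only at \( \pm\infty \), the spectral gap \( \delta \coloneqq \min\setst{|\lambda_j-\lambda|}{\lambda_j\neq \lambda} \) is strictly positive, and so
\[
  \sum_{j\in\Z} |c_j|^2 = \sum_{\lambda_j=\lambda} |c_j|^2 + \sum_{\lambda_j \neq \lambda} |c_j|^2 \leq \norm{u}^2 + \delta^{-1} \sum_{\lambda_j \neq \lambda} |c_j|^2 |\lambda_j-\lambda| \leq (1+\delta^{-1}) \norm{u}^2.
\]
Combining yields \( \normH{u}^2 \leq C_\lambda \norm{u}^2 \) with a constant depending only on \( \lambda \), \( \delta \), and \( D \).

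In the third step I extend the identity \( (M,\norm{\cdot})\hookrightarrow \rmH_0^1(\Omega) \) by uniform continuity to the abstract completion \( H \), obtaining a continuous linear map \( \iota\colon H\to \rmH_0^1(\Omega) \). Injectivity of \( \iota \) is the one subtlety. If \( u\in H \) is the \( \norm{\cdot} \)-limit of some Cauchy sequence \( (u_n)\subset M \) and \( \iota(u_n)\to 0 \) in \( \rmH_0^1(\Omega) \), then for each fixed \( k\in\Z \) the map \( v\mapsto \skpL{v}{\phi_k} \) is continuous both with respect to \( \normH{\cdot} \) (hence along \( \iota(u_n) \)) and with respect to \( \norm{\cdot} \) (hence along \( u_n \)), so both limits exist and must agree. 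But the first limit is \( 0 \), while the second is the \( k \)-th coefficient of \( u \) in \( H \); thus every coefficient of \( u \) vanishes and \( u=0 \). I identify \( H \) with its image in \( \rmH_0^1(\Omega) \) so that \( \normH{\cdot}\les\norm{\cdot} \) holds on \( H \). Density of \( H \) in \( \rmH_0^1(\Omega) \) then follows because \( M\subset H \) and \( M \) is dense in \( \rmH_0^1(\Omega) \) by \cref{cor:ONB}. The main obstacle is step two, where the spectral-gap argument is required to handle the inverse weight \( |\lambda_j-\lambda|^{-1} \) near the parameter \( \lambda \); the other steps are essentially standard functional analysis once this estimate is in hand.
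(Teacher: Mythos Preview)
Your proof is correct. The paper itself does not supply a proof of this proposition, treating it as a direct consequence of \cref{hyp:sum_eigs} and the eigenvalue properties from \cref{cor:ONB}; your argument fills in exactly the natural details one would expect---namely the spectral-gap estimate to control \(\sum_j |c_j|^2\) and the routine verification that the completion embeds injectively into \(\rmH_0^1(\Omega)\).
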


We emphasize that we do not require the opposite bound \( \norm{\cdot}\les \normH{\cdot} \), which
appears to be much harder to verify. Note that this would imply the opposite
inclusion \( H\supset \rmH_0^1(\Omega) \). We now implement the
variational approach in the Hilbert space \( (H, \skp{\cdot}{\cdot}) \) and prove the existence of critical points of \( \Psi_\lambda \) in this space. Given
that \( H \) is dense in \( \rmH_0^1(\Omega) \), \( \Psi_\lambda'(u)=0 \) and \( u\in H \) in fact implies that \( u\in
\rmH_0^1(\Omega) \) is a weak solution in the \( \rmH_0^1(\Omega) \)-sense. In other words, critical points of
\( \Psi_\lambda:H\to\R \) provide solutions of \cref{eq:NLeq_general}.

\medskip

We first provide the functional analytical framework required by the Critical Point Theory
from~\cite{SzuWet_Nehari}. Being given the inner product from \cref{eq:def_norm}, we have an orthogonal
decomposition \( H = E^+ \oplus_\perp E^0 \oplus_\perp E^- \) where
\begin{align*}
  E^+ & \coloneqq \ov{\spa\setwt{\phi_j}{\lambda_j -\lambda > 0}}^{\norm{\cdot}},
  \\
  E^0 & \coloneqq \spa\setwt{\phi_j}{\lambda_j-\lambda = 0},
  \\
  E^- & \coloneqq \ov{\spa\setwt{\phi_j}{\lambda_j-\lambda < 0}}^{\norm{\cdot}}.
\end{align*}
The subspaces \( E^+, E^- \) are infinite-dimensional whereas \( E^0 \) is finite-dimensional, which is a consequence of \( |\lambda_j| \to \infty \) as \( |j|\to\infty \),
see \cref{eq:muj}. Here, \( E^0 = \{0\} \) is admissible.
Let \( \Pi^\pm : H  \to E^\pm \) denote the corresponding orthogonal projectors, and we will
write \( u^\pm \coloneqq \Pi^\pm u \) in the following. The whole point about the inner
product \cref{eq:def_norm} is that \( a(u,u)= \|u^+\|^2 - \|u^-\|^2 \), so we may rewrite the functional \( \Psi_\lambda \) from~\eqref{eq:def_Phi} as
\begin{equation}\label{eq:def_Phi2}
  \Psi_\lambda(u)
  = \frac{1}{2} \norm{u^+}^2 - \frac{1}{2} \norm{u^-}^2 - I(u)
  \qquad\text{where }I(u) \coloneqq \frac{1}{4} \int_\Omega \kappa(x) {u(x)}^4  \di{x}.
\end{equation}
Now, \( \Psi_\lambda \) has the right form to apply the critical point theorem from~\cite[Theorem~35]{SzuWet_Nehari}.

\begin{thm}[Szulkin, Weth]\label{thm:SzulkinWeth}
  Let \( (H, \skp{\cdot}{\cdot}) \) be a Hilbert space and suppose that the functional \( \Psi_\lambda : H
  \to \R \) satisfies
  \begin{itemize}
    \item[(i)] \( \Psi_\lambda(u) = \frac{1}{2} \norm{u^+}^2 - \frac{1}{2} \norm{u^-}^2 - I(u) \) where \( I(0)=0 \),
          \( \frac{1}{2} I'(u)[u] > I(u)>0 \) for all \( u \neq 0 \) and \( I \) is weakly lower semicontinuous.
    \item[(ii)] For each \( w \in E \sm (E^0 \oplus E^-) \) there exists a unique nontrivial
          critical point of \( \Psi_\lambda \) restricted to \( \R^+ w \oplus E^0 \oplus E^- \),
          which is the unique global maximizer.
    \item[(iii)] \( I'(u) = \oo(\norm{u}) \) as \( u \to 0 \).
    \item[(iv)] \( I(su)/s^2 \to \infty \) uniformly for \( u \) on weakly compact  subsets of \( H \setminus
          \{0\} \) as \( s \to \infty \).
    \item[(v)] \( I' \) is completely continuous.
  \end{itemize}
  Then \( \Psi_\lambda'(u) = 0 \) has a least energy solution.
  Moreover, if \( I \) is even, then this equation has infinitely many pairs of solutions.
\end{thm}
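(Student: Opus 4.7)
The plan is to follow the generalized Nehari manifold method of Szulkin-Weth, reducing the strongly indefinite critical point problem for $\Psi_\lambda$ on $H$ to a minimization (or min-max) problem for an even functional on the unit sphere in $E^+$, which is a smooth Hilbert manifold of positive genus. The starting point is the Nehari-type set
\[
  \mathcal{M} \coloneqq \setst{u \in H \sm (E^0 \oplus E^-)}{\Psi_\lambda'(u)[u] = 0,\ \Psi_\lambda'(u)[v] = 0 \ \forall v \in E^0 \oplus E^-}.
\]
Assumption~(ii) says that for every $w \in H \sm (E^0 \oplus E^-)$ there is a unique $m(w) \in \mathbb{R}^+ w \oplus E^0 \oplus E^-$ that maximizes $\Psi_\lambda$ on that half-space, and one checks that $m(w) \in \mathcal{M}$. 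Restricting to the unit sphere $S^+ \coloneqq \{w \in E^+ : \|w\| = 1\}$, the first step is to prove that $\hat{m} \coloneqq m|_{S^+} : S^+ \to \mathcal{M}$ is a homeomorphism, with inverse $u \mapsto u^+/\|u^+\|$; continuity at $w$ relies on the fact that the maximizer in (ii) depends continuously on the direction, via (i) and (iv).

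The second step is to introduce $J \coloneqq \Psi_\lambda \circ \hat{m} : S^+ \to \R$ and observe that the critical points of $J$ on $S^+$ are in bijection with critical points of $\Psi_\lambda$ on $H$ that lie in $\mathcal{M}$ (together with $0$). From (iii) one sees $J(w) > 0$ on $S^+$, while (iv) forces the maximizer in (ii) to remain bounded away from infinity along minimizing sequences. Thus $c \coloneqq \inf_{S^+} J > 0$, and a minimizing sequence $(w_n) \subset S^+$ extracts, using (v) and the Ekeland variational principle applied to $J$, to a convergent subsequence after passing via $\hat{m}(w_n) \in \mathcal{M}$ and exploiting the fact that $I'$ completely continuous turns weak convergence into strong convergence of the nonlinear part. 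The limit realizes the infimum, and the chain rule yields a least energy solution of $\Psi_\lambda'(u) = 0$.

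For the multiplicity claim under the evenness of $I$, the functional $\Psi_\lambda$ is even, so $\hat{m}$ is $\Z_2$-equivariant and $J : S^+ \to \R$ is even. Since $E^+$ is infinite-dimensional, $S^+$ has infinite Krasnoselskii genus, and one defines the classical min-max values
\[
  c_k \coloneqq \inf_{A \in \Gamma_k} \sup_{w \in A} J(w),
  \qquad
  \Gamma_k \coloneqq \setst{A \subset S^+}{A = -A,\ A \text{ compact},\ \gamma(A) \geq k}.
\]
One then verifies a Palais-Smale-type condition for $J$ at every positive level (this is where (v) is essential, together with the identity $\frac{1}{2} I'(u)[u] > I(u) > 0$ from (i) to get boundedness of Palais-Smale sequences), and applies an equivariant deformation lemma on $S^+$ to conclude that each $c_k$ is a critical value. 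Since $c_k \to \infty$, one obtains infinitely many distinct pairs $\pm u_k$ of critical points.

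The main obstacle, in my view, is the compactness analysis at step two and in the Palais-Smale verification: the splitting $H = E^+ \oplus E^0 \oplus E^-$ has both $E^\pm$ infinite-dimensional, so one cannot simply use weak-to-strong convergence in any single component. The leverage comes from the fact that on $\mathcal{M}$ the $E^0 \oplus E^-$ component of $u$ is slaved to $u^+$ via the maximization in (ii), and (v) upgrades the resulting weak convergence into norm convergence of $I'$. Making this precise, together with the homeomorphism property of $\hat{m}$, is the technical heart of the argument.
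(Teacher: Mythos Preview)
The paper does not prove this theorem. It is stated as a result of Szulkin and Weth, with a citation to \cite[Theorem~35]{SzuWet_Nehari}, and is immediately followed by ``Applying this result in our setting, we get the following.'' There is no proof environment attached to \cref{thm:SzulkinWeth} in the paper; the authors treat it as a black box and only verify its hypotheses (i)--(v) in the proof of \cref{thm:var_general}.

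Your outline is a faithful sketch of the Szulkin--Weth generalized Nehari manifold method from the original source: the reduction via $\hat m:S^+\to\mathcal M$, the minimization of $J=\Psi_\lambda\circ\hat m$ for the ground state, and the genus-based min-max for multiplicity under evenness. So there is no discrepancy in approach to discuss --- you are reconstructing the cited external proof, not an argument the present paper supplies. If the intent was to compare with what this paper does, the relevant content is only the short verification of (i)--(v) in the proof of \cref{thm:var_general}, not a proof of \cref{thm:SzulkinWeth} itself.
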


Applying this result in our setting, we get the following.

\begin{thm}\label{thm:var_general}
  Let  \( \Omega \subset \R^N \) be a bounded domain for \( N \in \{1,2,3\} \) and let
  \cref{hyp:sigma_c,hyp:w_T_cor,hyp:sum_eigs}  hold with \( \kappa(x)\geq \alpha>0 \)  for almost all
  \( x\in\Omega \). Then \cref{eq:NLeq_general} has infinitely many solutions in \( H \), among
  which a least energy solution \( u^*\in H \sm \{0\} \)   characterized by
  \[
    \Psi_\lambda(u^*) = \min\setwt{\Psi_\lambda(u)}{u \in H \sm \{0\} \text{ solves
      }\eqref{eq:NLeq_general}}.
  \]
\end{thm}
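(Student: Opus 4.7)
The plan is to apply the Szulkin--Weth critical point theorem, \cref{thm:SzulkinWeth}, to the functional $\Psi_\lambda$ on the Hilbert space $H$ furnished by \cref{prop:innerproduct}. Since $H \hookrightarrow \rmH_0^1(\Omega)$ is continuous and dense by that proposition, and $\rmH_0^1(\Omega) \hookrightarrow \rmL^4(\Omega)$ is compact for $N \in \{1,2,3\}$ by Rellich--Kondrachov, the composition $H \hookrightarrow \rmL^4(\Omega)$ is compact. In particular $\Psi_\lambda$ is a well-defined $\scrC^2$-functional on $H$ with the splitting \cref{eq:def_Phi2}. It then suffices to verify hypotheses (i)--(v) of \cref{thm:SzulkinWeth} and to translate the resulting critical points in $H$ into weak solutions in $\rmH_0^1(\Omega)$.

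Hypotheses (i), (iii), (iv), (v) follow in a standard fashion from the compact embedding $H \hookrightarrow \rmL^4(\Omega)$ and $\kappa \geq \alpha > 0$. For (i): $I(0) = 0$, $I(u) > 0$ for $u \not\equiv 0$, the identity $I'(u)[u] = 4 I(u)$ gives $\tfrac12 I'(u)[u] > I(u)$, and weak sequential continuity of $I$ (hence weak lower semicontinuity) follows from the compact embedding. For (iii), H\"older and Sobolev yield $\abs{I'(u)[\phi]} \lesssim \norm{u}_{\rmL^4}^3 \norm{\phi}_{\rmL^4} \lesssim \norm{u}^3 \norm{\phi}$, so $I'(u) = \oo(\norm{u})$. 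For (iv), the homogeneity $I(su) = s^4 I(u)$ reduces the claim to $\inf_{u \in K} I(u) > 0$ on weakly compact $K \subset H \sm \{0\}$, which follows by a subsequence argument using the compact embedding and $0 \notin K$. For (v), complete continuity of $I'$ is obtained by factoring $u_n^3 - u^3 = (u_n - u)(u_n^2 + u_n u + u^2)$ and exploiting strong $\rmL^4$-convergence of weakly convergent sequences in $H$.

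The main obstacle is hypothesis (ii): for each $w \in H \sm (E^0 \oplus E^-)$, the restriction of $\Psi_\lambda$ to the half-affine subspace $\R^+ w \oplus E^0 \oplus E^-$ must admit a unique nontrivial critical point which is the global maximizer. The plan is to adapt the Nehari-type argument from~\cite[Proposition~39]{SzuWet_Nehari}: coercivity $\Psi_\lambda(tw + v) \to -\infty$ as $t^2 + \norm{v}^2 \to \infty$ on this subspace follows from the dominance of the $-I$ term (bounded above by a negative multiple of $\norm{\,\cdot\,}_{\rmL^4}^4$, via $\kappa \geq \alpha > 0$) over the indefinite quadratic part, which provides a global maximizer; since $\Psi_\lambda(0) = 0$ while $\Psi_\lambda(tw) > 0$ for small $t > 0$ (using $w^+ \neq 0$), this maximizer is interior and hence a critical point. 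Uniqueness is the subtler Szulkin--Weth step: it rests on strict convexity of $s \mapsto I(su)/s^2$ on $(0, \infty)$ combined with the indefinite quadratic structure, which excludes the existence of a second critical point on the same half-affine slice.

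Once (i)--(v) are established, \cref{thm:SzulkinWeth} delivers a least energy critical point $u^* \in H \sm \{0\}$, and since $I$ is even (because $u \mapsto u^4$ is even), it further produces infinitely many pairs of critical points. Each such $u \in H$ is a weak solution of \cref{eq:NLeq_general} in $\rmH_0^1(\Omega)$: by Sobolev the derivative $\Psi_\lambda'(u) \in H^*$ extends continuously to $\rmH_0^1(\Omega)^*$, and its vanishing on the dense subspace $H$ then propagates to all of $\rmH_0^1(\Omega)$, completing the proof.
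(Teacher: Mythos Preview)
Your proposal is correct and follows essentially the same route as the paper: apply \cref{thm:SzulkinWeth} to $\Psi_\lambda$ on the Hilbert space $H$ from \cref{prop:innerproduct}, verifying (i), (iii), (iv), (v) via the compact embedding $H\hookrightarrow\rmH_0^1(\Omega)\hookrightarrow\rmL^4(\Omega)$ and the homogeneity $I'(u)[u]=4I(u)$, and deferring (ii) to the argument in~\cite[pp.~31--32]{SzuWet_Nehari}. The only cosmetic differences are that the paper invokes Fatou's lemma for weak lower semicontinuity (you use the compact embedding directly, which in fact gives weak continuity), and that you sketch the coercivity/uniqueness mechanism behind (ii) where the paper simply cites the reference.
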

Note that we can also treat \( \kappa(x)\leq -\alpha<0 \) by considering \( (-\sigma, -\lambda) \). In that case, \cref{eq:NLeq_general} has infinitely many solutions in \( H \) among which a maximal energy solution \( u^* \) characterized by \( \Psi_\lambda(u^*) = \max\setwt{\Psi_\lambda(u)}{u \in H \sm \{0\} \text{ solves
  }\eqref{eq:NLeq_general}} \).

\begin{proof}
  We check the assumptions (i)-(v) from \cref{thm:SzulkinWeth}.
  First note that the functional \( I \) is continuously differentiable with
  Fr\'{e}chet derivative at \( u\in H \) given by \( I'(u)[v]=\int_\Omega \kappa(x) {u(x)}^3 v(x) \di{x} \) for \( v\in H \). This is a consequence of \( H\subset \rmH_0^1(\Omega)\subset \rmL^4(\Omega) \) with continuous
  injections by \cref{prop:innerproduct} and Sobolev's Embedding Theorem for \( N \in \{1,2,3\} \).
  From \cref{eq:def_Phi2} and \( I'(u)[u] = 4 I(u)>0 \) for \( u\neq 0 \) we infer the first part of (i). The
  weak lower semicontinuity of \( I \) follows from Fatou's Lemma given that \( u_k\rightharpoonup u \) in \( H \) implies \( u_k\rightharpoonup u \) in \( \rmH_0^1(\Omega) \) and thus
  \( u_k\to u \) pointwise almost everywhere. Hypothesis (iii) is immediate and (iv) holds because \( I(su)/s^2 = s^2 I(u) \)
  where \( \inf_B I > 0 \) for any weakly compact  subset \( B\subset H \setminus \{0\} \). Assumption (v) follows from
  the above formula for \( I' \) and the compactness of the embedding \( H\hookrightarrow \rmH_0^1(\Omega)\hookrightarrow \rmL^4(\Omega) \) due to \( N\in \{1,2,3\} \). The property (ii)
  is more difficult to prove. We refer to~\cite[pp. 31-32]{SzuWet_Nehari} where this has been carried out even
  in the case of more general nonlinearities.
\end{proof}

\subsection{An example in 1D}

We now show that the general result from above applies in the one-dimensional setting that we already discussed in our bifurcation analysis from \cref{cor:bif1D}.
So we consider the problem \cref{eq:NLeq1D}, namely
\[
  -\frac{\di{}}{\di{x}} \plr{\sigma(x) \, u'(x)} - \lambda \, c(x) \, u = \kappa(x) u^3
  \quad \text{in } \Omega,
  \qquad u \in \rmH_0^1(\Omega).
\]

\begin{lem}\label{lem:VerificationHypotheses2}
  Let \( \Omega, \sigma, c, \kappa \) be given as in \cref{cor:bif1D} with \( \kappa(x)\geq \alpha>0 \)
  for almost all \( x\in\Omega \). Then \cref{hyp:sigma_c,hyp:w_T_cor,hyp:sum_eigs} hold.
\end{lem}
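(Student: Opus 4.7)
The first two assumptions are immediate. \cref{hyp:sigma_c} is clear from the 1D setup of \cref{cor:bif1D} --- $\sigma$ is piecewise constant with $\sigma_+>0>\sigma_-$, $c$ is piecewise constant with positive values, and $\kappa\in\rmL^\infty(\Omega)$ is hypothesized --- and \cref{hyp:w_T_cor} is exactly the content of \cref{lem:VerificationHypotheses1}. The substantive task is \cref{hyp:sum_eigs}, which, since $\sum_{i,j}c_ic_j\skpH{\phi_i}{\phi_j}=\normH{\sum_ic_i\phi_i}^2$, is equivalent to the Bessel-type inequality
\[
\normH{u}^2 \leq D\sum_{j\in\Z}c_j^2(1+|\lambda_j|)\qquad\text{for } u=\textstyle\sum_j c_j\phi_j.
\]

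The plan is to exploit the explicit eigenfunctions from \cref{lem:sort_eigs_1d}. First I would decompose
\[
\normH{u}^2 = a(u,u)+2\int_{\Omega_-}|\sigma_-|\,|u'|^2 \di{x} = \sum_j c_j^2\lambda_j+2\int_{\Omega_-}|\sigma_-|\,|u'|^2\di{x},
\]
so that the diagonal piece is already dominated by the right-hand side. Using the symmetric identity $\normH{u}^2=-a(u,u)+2\int_{\Omega_+}\sigma_+|u'|^2$, the problem reduces to bounding $\int_{\Omega_-}|\sigma_-|\,|u'|^2$ and its $\Omega_+$-analogue. I would then split $u=u^++u^0+u^-$ according to the sign of $\lambda_j$ (with $u^0$ at most one-dimensional, hence harmless) and reduce the main estimate to the same-sign contributions $\sum_{\lambda_i,\lambda_j>0}c_ic_jM_{ij}$ with $M_{ij}:=\int_{\Omega_-}|\sigma_-|\phi_i'\phi_j'\di{x}$, its symmetric counterpart on $\Omega_+$ for $u^-$, and the mixed cross terms.

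For the hyperbolic block, \cref{eq:eigenfunctionsI} together with the product-to-sum identity for $\cosh$ will, after invoking that $\coth(\tau_jk_-|a_-|)\to 1$ exponentially fast and that the normalization constants $\alpha_j$ are uniformly bounded, yield the asymptotic estimate
\[
M_{ij} \lesssim \frac{\tau_i\tau_j}{\tau_i+\tau_j},\qquad \tau_j:=\sqrt{|\lambda_j|}\sim j,
\]
where the asymptotic $\tau_j\sim j$ is provided by \cref{cor:ONB}. Applying Hilbert's classical double-series inequality $\sum_{i,j\geq 1}b_ib_j/(i+j)\leq \pi\|b\|_{\ell^2}^2$ with $b_j=c_j\tau_j$ will then give $\sum_{i,j}c_ic_jM_{ij}\lesssim\sum_j c_j^2\tau_j^2\leq\sum_j c_j^2(1+|\lambda_j|)$, as required.

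The oscillatory block on $\Omega_+$ is treated analogously: the cosine product-to-sum formula produces a non-resonant term that is again handled by Hilbert's inequality, and a resonant term that is exponentially small because the eigenvalue equation \cref{eq:lambdaj>0} forces $\tau_jk_+a_+=j\pi+\beta+\epsilon_j$ with $\epsilon_j$ exponentially small in $\tau_j$. The negative-eigenvalue case is entirely symmetric, and the mixed cross terms between $\lambda_i>0$ and $\lambda_j<0$ pair a hyperbolic with a sinusoidal factor and therefore decay exponentially in $\min(\tau_i,\tau_j)$. The main technical obstacle will be the careful bookkeeping of these four regimes (same-sign hyperbolic, same-sign oscillatory, negative-sign analogues, mixed) and confirming that the Hilbert kernel $\tau_i\tau_j/(\tau_i+\tau_j)$ is indeed the dominant off-diagonal behaviour once the explicit trigonometric and hyperbolic integrals have been reduced.
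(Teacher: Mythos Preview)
Your overall strategy --- estimate the off-diagonal Gram entries and then apply Hilbert's double-series inequality --- is exactly what the paper does, and your hyperbolic-block estimate $M_{ij}\lesssim\tau_i\tau_j/(\tau_i+\tau_j)$ as well as your treatment of the resonant term in the oscillatory block are both correct. However, your claim that the mixed cross terms ($\lambda_i>0$, $\lambda_j<0$) ``decay exponentially in $\min(\tau_i,\tau_j)$'' is wrong. A direct computation of $\int_{\Omega_\pm}\phi_i'\phi_j'$ in that regime (or of the full $\skpH{\phi_i}{\phi_j}$) gives a size $\approx\tau_i\tau_j(\tau_i+\tau_j)/(\tau_i^2+\tau_j^2)\lesssim\tau_i\tau_j/(\tau_i+\tau_j)$: the exponential growth of the $\cosh$-integral is exactly cancelled by the $\sinh$ in the normalization, so no exponential smallness remains. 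Fortunately this is still a Hilbert-type kernel, so your final step goes through unchanged once the reasoning is corrected.

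For comparison, the paper avoids your four-regime case distinction by computing $\skpH{\phi_i}{\phi_j}$ over the full domain and then invoking the eigenvalue relation \eqref{eq:lambdaj>0}--\eqref{eq:lambdaj<0}: the oscillatory contributions cancel and one is left with closed-form expressions in $\tanh$ only, which are bounded uniformly by the Lipschitz continuity of $z\mapsto z\tanh z$. This yields the single estimate $|\skpH{\phi_i}{\phi_j}|\lesssim\sqrt{1+|\lambda_i|}\sqrt{1+|\lambda_j|}/(1+|i|+|j|)$ for all $i\neq j$. Your decomposition $\normH{u}^2=\pm a(u,u)+2\int_{\Omega_\mp}|\sigma||u'|^2$ is a nice observation, but you could exploit it more fully: first bound $\normH{u}^2\leq 3(\normH{u^+}^2+\normH{u^0}^2+\normH{u^-}^2)$, then apply the $\Omega_-$-identity to $u^+$ and the $\Omega_+$-identity to $u^-$; only hyperbolic blocks remain and the oscillatory and mixed cases disappear entirely.
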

\begin{proof}

  The assumptions of \cref{cor:bif1D} imply \cref{hyp:sigma_c} and \cref{lem:VerificationHypotheses1}
  yields \cref{hyp:w_T_cor}. So it remains to verify \cref{hyp:sum_eigs}, which is based on the explicit
  formulas for the eigenpairs from the proof of \cref{lem:sort_eigs_1d}. Moreover, we use
  \cref{eq:lambdaj>0},\cref{eq:lambdaj<0} for \( \tau_j:=\sqrt{|\lambda_j|} \), i.e.,
  \begin{align} \label{eq:lambdaj}
    \begin{aligned}
      \frac{\tan\plr{\tau_j k_+ a_+}}{\tanh\plr{\tau_j k_- a_-}} \cdot \frac{\sigma_-
        k_-}{\sigma_+ k_+} = 1  \quad (\lambda_j>0),\qquad
      \frac{\tan\plr{\tau_j k_- a_-}}{\tanh\plr{\tau_j k_+ a_+}}
      \cdot \frac{\sigma_+ k_+}{\sigma_- k_-} = 1,
      \quad (\lambda_j<0).
    \end{aligned}
  \end{align}
  Recall \( k_\pm= \sqrt{c_\pm/|\sigma_\pm|} \) and
  that \( a_-,\sigma_- \) are negative whereas \( a_+,\sigma_+,c_+,c_- \) are positive.
  All the following  computations of integrals can be  checked using the Python library Sympy~\cite{Sympy}
  and can be found in the Jupyter notebook \texttt{symbolic\_1d} (with format \texttt{IPYNB},
  \texttt{HTML}, and \texttt{PDF}) supply in the zenodo archive
  \href{https://doi.org/10.5281/zenodo.5140020}{\texttt{10.5281/zenodo.5592105}}.

  \medskip

  \noindent
  \textbf{1st step: Asymptotics for \( \tau_j \).}\; In view of \cref{eq:lambdaj} we have
  \begin{align} \label{eq:WeylLaw1D}
    \begin{aligned}
      \tau_j
       & = \frac{j\pi}{k_+a_+} + \frac{1}{k_+a_+} \arctan\left(\frac{\sigma_+k_+}{|\sigma_-|k_-}\right) + \oo(1)
       &                                                                                                         & (j\to\infty),   \\
      \tau_j
       & = \frac{|j|\pi}{k_-|a_-|} + \frac{1}{k_-|a_-|} \arctan\left(\frac{|\sigma_-|k_-}{\sigma_+k_+}\right) +
      \oo(1)
       &                                                                                                         & (j\to -\infty).
    \end{aligned}
  \end{align}
  In particular, plugging in the definition of \( k_+,k_- \) and using \( \sin(\arctan(z))= z/\sqrt{1+z^2} \) we
  find
  \begin{align} \label{eq:LimitsTauj}
    \begin{aligned}
      \sin^2(\tau_j k_+ a_+)
       &
      = \frac{\sigma_+ c_+}{\sigma_+ c_+ + |\sigma_-|c_-}
      + \oo(1)
       &   & (j\to\infty),   \\
      \sin^2(\tau_j k_- |a_-|)
       &
      = \frac{|\sigma_-| c_-}{\sigma_+ c_+ + |\sigma_-|c_-}
      + \oo(1)
       &   & (j\to -\infty).
    \end{aligned}
  \end{align}

  \noindent
  \textbf{2nd step: Formulas and asymptotics for \( \alpha_j \).}\;
  We have for \( \lambda_j > 0 \)
  \begin{align*}
    \alpha_j^{-2}
     & = \left[ \frac{a_+ c_+}{2 \sin^2(\tau_j k_+ a_+)}
      - \frac{c_+}{2k_+\tau_j\tan(\tau_j k_+ a_+)}\right]
    + \left[  \frac{a_- c_-}{2 \sinh^2(\tau_j k_- a_-)}
      - \frac{c_-}{2k_-\tau_j\tan(\tau_j k_- a_-)}
    \right]                                                                    \\
     & \stackrel{\eqref{eq:lambdaj}}= \frac{a_+ c_+}{2 \sin^2(\tau_j k_+ a_+)}
    + \frac{a_- c_-}{2 \sinh^2(\tau_j k_- a_-)}
    \;\stackrel{\eqref{eq:LimitsTauj}}=\; \frac{a_+(\sigma_+ c_+ + |\sigma_-|c_-)}{2\sigma_+}
    + \oo(1)\qquad (j\to\infty).
  \end{align*}
  Similarly, for \( \lambda_j<0 \),
  \begin{align*}
    \alpha_j^{-2}
     & =  \frac{|a_-| c_-}{2 \sin^2(\tau_j k_- |a_-|)}
    - \frac{a_+ c_+}{2 \sinh^2(\tau_j k_+ a_+)}
    \stackrel{\eqref{eq:LimitsTauj}}=\frac{|a_-|(\sigma_+ c_+ + |\sigma_-|c_-)}{2|\sigma_-|}+ \oo(1)\qquad
    (j\to -\infty).
  \end{align*}

  \noindent
  \textbf{3rd step: Formulas and asymptotics for \( \normH{\phi_j} \).}\;
  For \( \lambda_j>0 \) we compute
  \begin{align*}
    \frac{\normH{\phi_j}^2}{\alpha_j^2}
     & = \tau_j^2\left[ \frac{a_+ k_+^2 \sigma_+}{2 \sin^2(\tau_j k_+ a_+)}
    + \frac{k_+\sigma_+}{2\tau_j\tan(\tau_j k_+ a_+)}\right]                  \\
     & + \tau_j^2 \left[  \frac{a_- k_-^2\sigma_-}{2 \sinh^2(\tau_j k_- a_-)}
      + \frac{k_-\sigma_-}{2\tau_j \tanh(\tau_j k_- a_-)}
    \right]                                                                   \\
     & \stackrel{\eqref{eq:lambdaj}}=
    \tau_j^2\left[ \frac{a_+ c_+}{2 \sin^2(\tau_j k_+ a_+)}
      + \frac{|a_-| c_- }{2 \sinh^2(\tau_j k_- |a_-|)}
      + \frac{c_-}{k_-\tau_j\tanh(\tau_j k_- |a_-|)}\right].
  \end{align*}
  Similarly, for \( \lambda_j<0 \),
  \begin{align*}
    \frac{\normH{\phi_j}^2}{\alpha_j^2}
    =  \tau_j^2\left[\frac{|a_-| c_-}{2 \sin^2(\tau_j k_- |a_-|)}
      + \frac{a_+ c_+}{2 \sinh^2(\tau_j k_+ a_+)}
      + \frac{c_+}{k_+\tau_j\tanh(\tau_j k_+ a+)}\right].
  \end{align*}
  Using the precise expressions for \( \alpha_j^{-2} \) from the second step and \cref{eq:WeylLaw1D} we get
  \begin{align*}
    \normH{\phi_j}^2
     & =  \tau_j^2 + \OO(\tau_j)
    = \frac{\pi^2}{k_+^2a_+^2}\,j^2   + \OO(j)
     &
     &
    (j\to\infty),
    \\
    \normH{\phi_j}^2
     & =  \tau_j^2 + \OO(\tau_j)
    = \frac{\pi^2}{k_-^2 |a_-|^2}\,j^2 + \OO(|j|)
     &
     & (j\to -\infty).
  \end{align*}
  By the first step,   there is \( F>0 \) such that \( \normH{\phi_j}^2\leq F(1+|\lambda_j|) \) for all
  \( j\in\Z \).

  \medskip

  \noindent
  \textbf{4th step: Formulas and bounds for \( \skpH{\phi_i}{\phi_j} \).}\; For \( i\neq j \) explicit
  computations exploiting \cref{eq:lambdaj} reveal
  \begin{align*}
    \frac{\skpH{\phi_i}{\phi_j}}{\alpha_i \alpha_j}
     & = \frac{2\tau_i \tau_j}{\tau_i^2 - \tau_j^2} \clr{
      \frac{k_{-} \sigma_{-} \tau_{i}}{\tanh{\left(a_{-} k_{-} \tau_{j} \right)}}
      - \frac{k_{-} \sigma_{-} \tau_{j}}{\tanh{\left(a_{-} k_{-} \tau_{i} \right)}}
    },
     &
     & \text{for } \lambda_i > 0, \lambda_j > 0,
    \\
    \frac{\skpH{\phi_i}{\phi_j}}{\alpha_i \alpha_j}
     & =   \frac{2\tau_i \tau_j}{\tau_i^2 - \tau_j^2} \clr{
      \frac{k_{+} \sigma_{+} \tau_{i}}{\tanh{\left(a_{+} k_{+} \tau_{j} \right)}}
      - \frac{k_{+} \sigma_{+} \tau_{j}}{\tanh{\left(a_{+} k_{+} \tau_{i} \right)}}
    },
     &
     & \text{for } \lambda_i < 0, \lambda_j < 0,
    \\
    \frac{\skpH{\phi_i}{\phi_j}}{\alpha_i \alpha_j}
     & =  \frac{2\tau_i \tau_j}{\tau_i^2 + \tau_j^2} \clr{
      \frac{k_{+} \sigma_{+} \tau_{j}}{\tanh{\left(a_{+} k_{+} \tau_{i} \right)}}
      + \frac{k_{-} \sigma_{-} \tau_{i}}{\tanh{\left(a_{-} k_{-} \tau_{j} \right)}}
    },
     &
     & \text{for } \lambda_i < 0, \lambda_j > 0,
    \\
    \frac{\skpH{\phi_0}{\phi_j}}{\alpha_0 \alpha_j}
     & = \frac{\sigma_{-}}{a_{-}} + \frac{\sigma_{+}}{a_{+}},
     &
     & \text{for } \lambda_0 = 0, \lambda_j \neq 0.
  \end{align*}
  To estimate the first of these terms set \( A(z):= z\tanh(z) \).
  Using the Lipschitz continuity of \( A \), the estimate \( \tau_i \leq\sqrt{1+ \lambda_i} \)
  and~\eqref{eq:WeylLaw1D} we get for \( \lambda_i,\lambda_j>0 \)
  \begin{align*}
    \left|\frac{\skpH{\phi_i}{\phi_j}}{ \alpha_i \alpha_j}\right|
     & =  \frac{\tau_i\tau_j}{\tau_i+\tau_j}\,
    \left| \frac{2\sigma_-}{a_-\tanh\left(a_{-} k_{-} \tau_{j} \right)\tanh\left(a_{-} k_{-} \tau_i
      \right)}\right|\,\left|
    \frac{A(k_-\sigma_-\tau_i)-A(k_-\sigma_-\tau_j)}{ \tau_i -  \tau_j}\right| \\
     & \leq C\,\frac{\sqrt{1+\lambda_i}\sqrt{1+ \lambda_j}}{1+i+j}
  \end{align*}
  for some \( C>0 \). Using the asymptotics for \( (\alpha_i) \) from the second step  and performing analogous estimates
  in the other cases we find that there is \( G>0 \) independent of \( i,j \) such that
  \[
    |\skpH{\phi_i}{\phi_j}|
    \leq G\, \frac{\sqrt{1+\abs{\lambda_i}}\sqrt{1+\abs{\lambda_j}}
    }{1+|i|+|j|}
    \qquad \text{whenever } i\neq j.
  \]

  \medskip

  \noindent
  \textbf{5th step: Conclusion.}\;
  For \( \plr{c_j} \in \ell^\infty(\Z) \) with a finite number of non-zero entries, we define \( d_j :=
  \sqrt{1+\abs{\lambda_j}}\, \abs{c_j} \).
  The third and fourth step yield
  \[
    \sum_{i, j \in \Z} c_i c_j \skpH{\phi_i}{\phi_j}
    \leq F \sum_{j \in \Z} d_j^2
    + G \sum_{i \neq j \in \Z}  \frac{d_i d_j }{1+|i|+|j|}.
  \]
  Applying Hilbert's inequality, see for instance Eq.~(2) in~\cite{KufMalPer_Hardy}, gives
  \[
    \sum_{i, j \in \Z} c_i c_j \skpH{\phi_i}{\phi_j}
    \leq F \sum_{j \in \Z} d_j^2
    +  \tilde G  \sum_{j \in \Z}  d_j^2
    = (F+ \tilde G)\sum_{j \in \Z}  (1+|\lambda_j|)|c_j|^2
  \]
  for some \( \tilde G>0 \), which is all we had to show. So \cref{hyp:sum_eigs} holds.
\end{proof}

Combining \cref{lem:VerificationHypotheses2} and \cref{thm:var_general}  we thus obtain:

\begin{cor}\label{cor:var}
  Let \( \Omega, \sigma, c, \kappa \) be given as in \cref{cor:bif1D} and \( \lambda \in \R \).
  Then equation \cref{eq:NLeq1D} has infinitely many nontrivial solutions in \( H \), among which a least energy solution.
\end{cor}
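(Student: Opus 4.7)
The proof of \cref{cor:var} is essentially a one-step deduction: all the real work has been done in \cref{lem:VerificationHypotheses2} and \cref{thm:var_general}, so my plan is simply to connect the two. First I would observe that the setting of \cref{cor:var} is exactly the setting of \cref{lem:VerificationHypotheses2}, so \cref{hyp:sigma_c}, \cref{hyp:w_T_cor} and \cref{hyp:sum_eigs} all hold. (Here I am tacitly reading the statement of \cref{cor:var} as inheriting the uniform positivity condition $\kappa(x) \geq \alpha > 0$ on the nonlinearity from the ambient discussion; if instead $\kappa(x) \leq -\alpha < 0$, the standard substitution $(\sigma,\lambda) \mapsto (-\sigma,-\lambda)$ reduces to the positive case and yields a maximal-energy rather than a least-energy solution.)

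Next, because the space dimension satisfies $N = 1 \in \{1,2,3\}$ and the three hypotheses are in force, I would directly invoke \cref{thm:var_general} with the given $\lambda \in \R$. This produces infinitely many nontrivial solutions of \cref{eq:NLeq_general}, among which a least-energy one, all lying in the Hilbert space $H$ constructed via the inner product \cref{eq:def_norm}. Since \cref{eq:NLeq_general} is precisely \cref{eq:NLeq1D} in the one-dimensional situation under consideration, this delivers exactly the conclusion of the corollary.

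The only step that could be described as an ``obstacle'' in writing out the proof is bookkeeping: verifying that the hypotheses of \cref{thm:var_general} really do transfer verbatim into the 1D setting and that there is no hidden gap concerning the sign of $\kappa$ or the inclusion $H \subset \rmH_0^1(\Omega)$. The latter is guaranteed by \cref{prop:innerproduct}, which itself is built on \cref{hyp:sum_eigs}, so the chain of implications closes. Consequently the proof amounts to two sentences: \emph{by \cref{lem:VerificationHypotheses2} the hypotheses \cref{hyp:sigma_c,hyp:w_T_cor,hyp:sum_eigs} are satisfied; \cref{thm:var_general} then yields the claim.}
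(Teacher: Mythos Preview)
Your proposal is correct and matches the paper's own argument exactly: the paper derives \cref{cor:var} in one line by ``combining \cref{lem:VerificationHypotheses2} and \cref{thm:var_general}''. Your observation about the implicit sign condition on $\kappa$ is also apt, since \cref{cor:bif1D} itself only assumes $\kappa \in \rmL^\infty(\Omega)$ while both \cref{lem:VerificationHypotheses2} and \cref{thm:var_general} require $\kappa(x) \geq \alpha > 0$ (or the negative counterpart).
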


\begin{rem}\label{rem:RieszBasis}
  In the one-dimensional case of \cref{eq:NLeq1D} numerical investigations (cf. \cref{app:riesz_basis_1d}) indicate that not only
  \( \normH{\cdot}\les\norm{\cdot} \) but also \( \norm{\cdot}\les\normH{\cdot} \) holds. 
  As a consequence,   \( \norm{\cdot} \) and \( \normH{\cdot} \) are equivalent norms, so \(
  H=\rmH_0^1(\Omega) \) and the family \( \plr{\phi_j / \sqrt{1+\abs{\lambda_j}}}_{j \in \Z} \) is a
  so-called Riesz basis of \( \rmH_0^1(\Omega) \).
\end{rem}

\section{Open problems}

We finally address some open problems that we believe to be interesting to study:
\begin{enumerate}[(1)]

  \item In \cref{cor:ONB} we showed that the eigenvalues satisfy the bounds
        $1+|\lambda_j|\geq c \plr{1+|j|}^{2/N}$ for all \( j \in \N \). An upper bound for the eigenvalues is
        unfortunately missing, let alone precise Weyl law asymptotics for the counting function
        \( \Lambda\mapsto \Card\setst{\lambda_j}{\abs{\lambda_j} \leq \Lambda} \) that one might compare to
        well-known ones for the Laplacian. In the 1D case this may be based on \cref{eq:WeylLaw1D}.
        In the higher-dimensional case we expect new difficulties due to ``plasmonic'' eigenvalues
        coming from a concentration near the interface.

  \item In view of the physical context described in \cref{subsec:Physics}, the case of sign-changing \( \sigma \) and \( c \) and even \( \kappa \) is relevant.
        As explained in \cref{rem:T1_assumption}, there is no hope to get an analogous spectral
        theory for all \( c \in \rmL^\infty(\Omega) \), but it would be interesting to identify
        those functions that admit such a one.

  \item \Cref{cor:bif1D} relies on the precise knowledge of eigenfunctions in the 1D case,
        especially regarding their nodal structure.
        Is there a way to find similar properties in more general one-dimensional problems  or even
        in higher-dimensional settings?

  \item In \cref{rem:RieszBasis}  we commented on the numerical evidence that 
        \( \plr{\phi_j / \sqrt{1+\abs{\lambda_j}}}_{j \in \Z} \) is a Riesz basis of \( \rmH_0^1(\Omega) \).
        In our one-dimensional case, a theoretical verification seems doable by extensive analysis of
        explicit formulas as in \cref{lem:VerificationHypotheses2}, but we have not found a reasonably
        short and elegant proof. We believe that such a one could be of interest. 
        
\end{enumerate}

\appendix

\section{Riesz basis property for the 1d example}\label{app:riesz_basis_1d}

We numerically illustrate that  \( {\bigl(\phi_j\big/\sqrt{1+\abs{\lambda_j}}\bigr)}_{j\in \Z} \) indeed satisfies
the Riesz' basis property in the one-dimensional setting studied earlier.
In other words, we numerically show that both inequalities \( \normH{\cdot}\les \norm{\cdot} \) and
\( \norm{\cdot}\les \normH{\cdot} \) may be expected to hold.
Note that the first inequality was rigorously proven in \cref{prop:innerproduct}.
In the following, we use the same notation as in \cref{sec:var_method}.

\begin{figure}
  \includegraphics[scale=0.47]{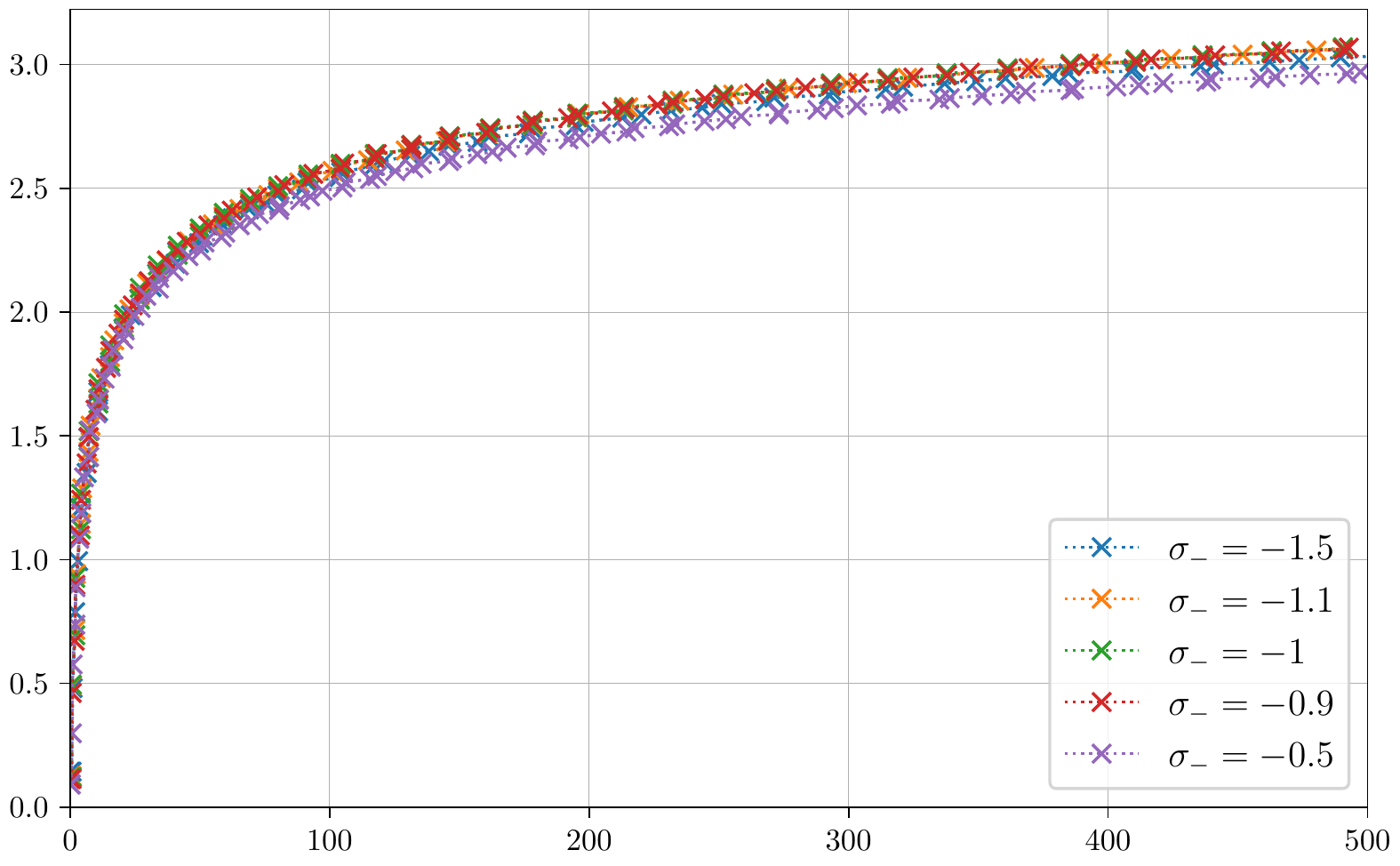}%
  \hspace*{2ex}%
  \includegraphics[scale=0.47]{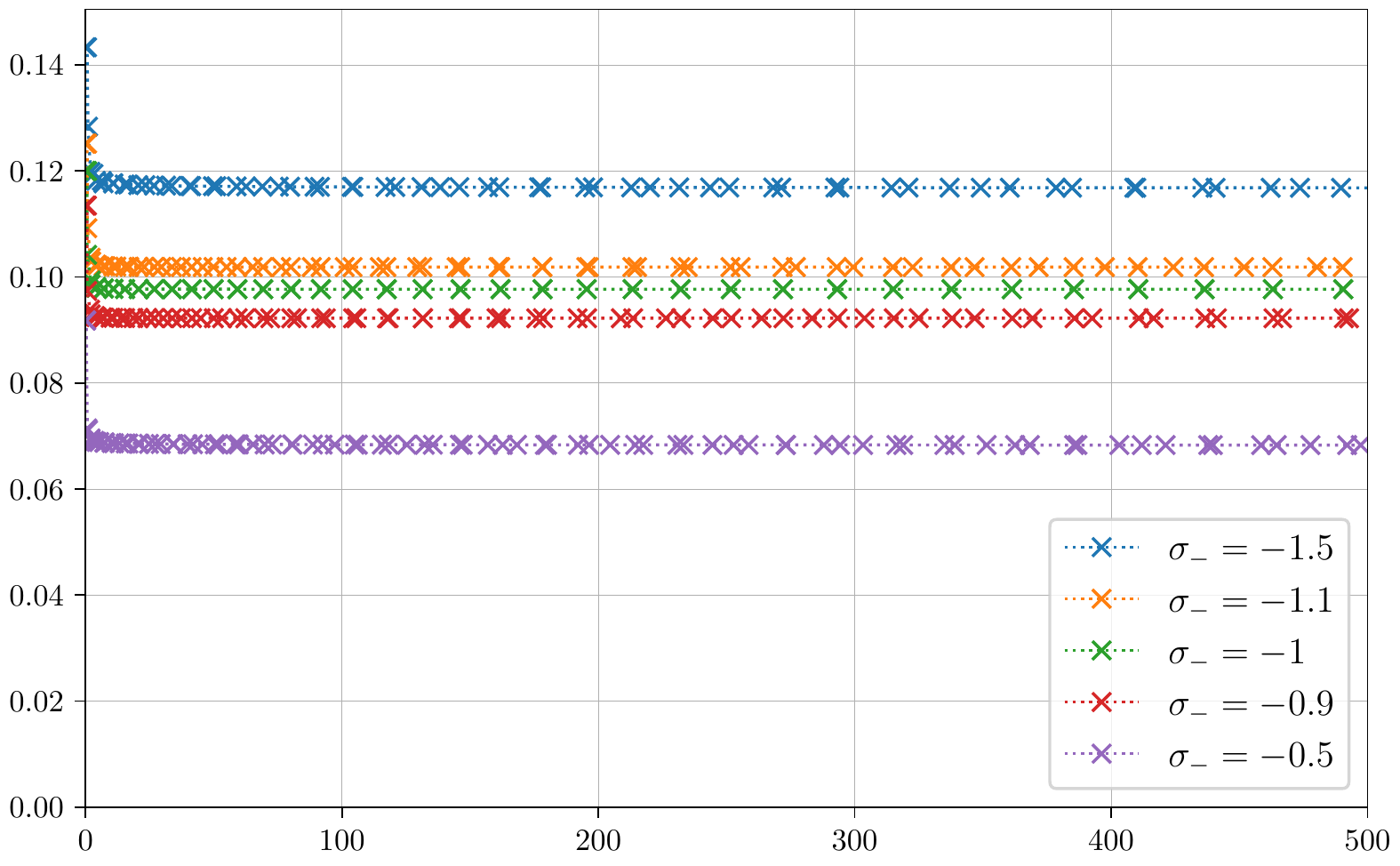}
  \caption{Maximal (left) and minimal (right) numerical eigenvalues of \( M_\Lambda \) in dependence on \( \Lambda \) in the one-dimensional setting.}%
  \label{fig:riesz}
\end{figure}

\medskip

For the numerical investigation, we choose the one-dimensional setting as in \cref{sec02}: We consider
the linear operator
\( \phi\mapsto - \frac{\di{}}{\di{x}}\big(\sigma(x) \, \phi'\big) \)  on
\( \Omega=(-5,5) \) with \( \sigma(x)=\sigma_+=1 \) on \( \Omega_+=(0,5) \) and various choices for \( \sigma(x)=\sigma_-<0 \)
on \( \Omega_- = (-5,0) \). We numerically compute the eigenvalues of the matrix
\[
  M_\Lambda =
  \plr{\frac{\skpH{\phi_i}{\phi_j}}{\sqrt{1+\abs{\lambda_i}}\sqrt{1+\abs{\lambda_j}}}}_{\abs{\lambda_i}, \abs{\lambda_j}\leq \Lambda}
\]
where the \( (\lambda_j,\phi_j) \) denote the eigenpairs from \cref{cor:ONB}.
The code is available on Zenodo with DOI
\href{https://doi.org/10.5281/zenodo.5707422}{\texttt{10.5281/zenodo.5707422}}.
\Cref{fig:riesz} shows the maximal and minimal eigenvalues in dependence on \( \Lambda \) for different choices of \( \sigma_- \) on the left and right, respectively.
For all considered choices of \( \sigma_- \), we clearly observe that the maximal and minimal eigenvalues asymptotically tend to a finite, non-zero value.
As discussed above, this behavior was rigorously proved for the maximal eigenvalue in \cref{prop:innerproduct} so that we focus on the minimal eigenvalue.
The asymptotic value is reached quickly. Note that for \( \abs{\sigma_-}\to 0 \) the minimal eigenvalue is
expected to degenerate to zero, which explains the dependence on \( \sigma_- \) observable in \cref{fig:riesz}
(right). This is in line with \cref{rem:ordering}.

%
%

\section*{Funding}

Funded by the Deutsche Forschungsgemeinschaft (DFG, German Research Foundation) --- Project-ID 258734477 --- SFB 1173.

\bibliographystyle{abbrv}
\bibliography{document}

\end{document}